\newtheorem{definition}{Definition}[section]
\newtheorem{proposition}{Proposition}[section]
\newtheorem{theorem}{Theorem}[section]
\newtheorem{remark}{Remark}[section]
\newtheorem{lemma}{Lemma}[section]
\newtheorem{corollary}{Corollary}[section]
\newcommand{\R}{\mathbb{R}}
\newcommand{\Z}{\mathbb{Z}}
\newcommand{\supp}{\mathrm{supp }}
\def\T{\mathbb{T}^3}
\def\N{\mathbb N}
\def\eps{\varepsilon}
\def\1{{\mathchoice {1\mskip-4mu\mathrm l}      
		{1\mskip-4mu\mathrm l} 
		{1\mskip-4.5mu\mathrm l} {1\mskip-5mu\mathrm l}}}
\def\mR{\mathring{R}}
\def\Id{\mathrm{Id}}
\def\div{\mathrm{div\,}}
\def\curl{\mathrm{curl\,}}
\def\tr{\mathrm{tr\, }}
\numberwithin{equation}{section}
\begin{document}

\title[Non-uniqueness for the Euler equations]{Non-uniqueness for the Euler equations up to Onsager's critical exponent}

\author{Sara Daneri}
\address{Gran Sasso Science Institute, 67100 L'Aquila, Italy}
\email{sara.daneri@gssi.it}

\author{Eris Runa}
\address{Quant Institute, Deutsche Bank AG, 10585 Berlin, Germany}
\email{eris.runa@gmail.com}

\author{L\'aszl\'o Sz\'ekelyhidi Jr.} 
\address{Institut f\"ur Mathematik, Universit\"at Leipzig, 04009 Leipzig, Germany}
\email{szekelyhidi@math.uni-leipzig.de}

\maketitle
\begin{abstract}
	In this paper we deal with the Cauchy problem for the incompressible Euler equations in the three-dimensional periodic setting. We prove non-uniqueness for an $L^2$-dense set of H\"older continuous initial data in the class of H\"older continuous admissible weak solutions for all exponents below the Onsager-critical $1/3$. This improves previous results on non-uniqueness obtained in \cite{Dan,DanSz} and generalizes \cite{BDSV}. 
	\end{abstract}

\section{Introduction}

In this paper we address the Cauchy problem for the incompressible Euler equations
\begin{equation}
\label{eq:EE}
\left\{\begin{aligned}
\partial_tv+\div(v\otimes v)+\nabla p&=0 && &\text{in $\T\times(0,T)$}\\
\div v&=0 && &\text{in $\T\times(0,T)$}\\
v(\cdot,0)&=v_0(\cdot) && &\text{in $\T$}
\end{aligned}
\right.
\end{equation}
on the three-dimensional torus $\T$, where $v:\T\times[0,T]\to\R^3$ is the velocity field of the fluid and $p:\T\times[0,T]\to\R$ the pressure field.

We are interested in \emph{admissible} weak solutions to \eqref{eq:EE}, namely weak solutions $v\in C([0,T];L^2_w(\T))$ such that
\begin{equation}\label{eq:admineq}
\int_{\T}|v(x,t)|^2\,dx\leq\int_{\T}|v_0|^2\,dx.
\end{equation} 
The above is a very natural physical condition, which assuming the velocity field is in $C^1$ (namely the solution is classical) implies uniqueness among all weak solutions which satisfy \eqref{eq:admineq}. This is the well-known weak-strong uniqueness phenomenon, which holds even among measure-valued solutions \cite{BDS}.
For $L^\infty$ weak solutions, it has been instead shown in \cite{DS10} that  infinitely many admissible solutions  can have the same initial datum. Such $L^\infty$ initial data are the so-called ``wild'' initial data and are dense in $L^2$ (see \cite{SzW}).

A natural question is whether there exists a regularity threshold above which admissibility implies uniqueness and below which non-uniqueness may occur. We treat this question in the class of $C^\beta$-weak solutions, that is, weak solutions which are H\"older continuous in space with exponent $\beta$, so that 
\begin{equation}\label{eq:holder}
|v(x,t)-v(y,t)|\leq C|x-y|^\beta \qquad\forall\,t\in[0,T],\,x,y\in\T
\end{equation}
for some constant $C$. According to the celebrated Onsager's conjecture \cite{Ons}, $C^\beta$-weak solutions of the Euler equations conserve the total kinetic energy if $\beta>1/3$, but anomalous dissipation of energy may be present if $\beta<1/3$. Recently this conjecture has been fully resolved (we refer to \cite{Eyi,CET} for the case $\beta>1/3$ and to \cite{Ise,BDSV} for the case $\beta<1/3$, and the extensive references therein). Our aim is to extend the results in \cite{Ise,BDSV} and show that ``wild'' initial data is $L^2$-dense in the class of $C^\beta$-weak solutions, which are admissible in the sense of \eqref{eq:admineq}. To state our result more precisely, we introduce the following
\begin{definition}\label{def:wild}
	Given a divergence-free vector field $v_0\in C^{\beta_0}(\T)$, we say that $v_0$ is a \emph{wild initial datum in $C^\beta$} if there exist infinitely many weak solutions $v$ to \eqref{eq:EE} on $\T\times[0,T]$ and satisfying \eqref{eq:admineq} and \eqref{eq:holder}.
\end{definition}

Our main result is the following.
\begin{theorem}\label{thm:main}
	For any $0<\beta<1/3$, the set of divergence-free vector fields $v_0\in C^\beta(\T;\R^3)$ which are wild initial data in $C^\beta$ is a dense subset of the divergence-free vector fields in $L^2(\T;\R^3)$.
\end{theorem}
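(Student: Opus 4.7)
The plan is to combine the iterative convex integration scheme of \cite{BDSV} (which produces $C^\beta$ weak solutions of Euler with prescribed smooth initial trace and energy profile) with an $L^2$-density argument for the initial data in the spirit of \cite{SzW}.

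\textbf{Setup and subsolution.} Fix a divergence-free $v_0\in L^2(\T;\R^3)$ and $\eps>0$. By mollification, choose a smooth divergence-free $\bar v_0$ with $\|v_0-\bar v_0\|_{L^2}<\eps/2$. Construct a smooth Euler--Reynolds triple $(\bar v,\bar p,\mathring{\bar R})$ on $\T\times[0,T]$ such that (a) $\bar v(\cdot,0)=\bar v_0+w_0$, where $w_0$ is a superposition of Beltrami or Mikado flows at high frequency $\lambda_0$, with $\|w_0\|_{L^2}<\eps/2$ and $w_0\in C^{\beta_0}$; (b) $\mathring{\bar R}(\cdot,0)=0$, so the initial trace of $\bar v$ equals exactly $\tilde v_0:=\bar v_0+w_0$; (c) $\mathring{\bar R}$ is strictly admissible for $t>0$, dominated by a chosen smooth energy gap $e(t)-\int_\T|\bar v(\cdot,t)|^2\,dx>0$. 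The high-frequency $w_0$ injects energy that is small in $L^2$ but large enough to support a strictly positive Reynolds stress after a short ramp-up.

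\textbf{Iteration and conclusion.} Starting from $(\bar v,\bar p,\mathring{\bar R})$, iterate the gluing plus Mikado perturbation scheme of \cite{BDSV} at geometrically growing parameters $(\lambda_q,\delta_q)$. Because perturbations are supported where $\mathring{\bar R}\neq 0$ and vanish near $t=0$, the initial trace $\tilde v_0$ is preserved at every stage. The limit $v=\lim_q v_q$ is a $C^\beta$ weak solution of \eqref{eq:EE} with $v(\cdot,0)=\tilde v_0$ and kinetic energy equal to the prescribed profile $e(\cdot)$. Choosing $e$ with $e(t)\leq\int_\T|\tilde v_0|^2\,dx$ yields admissibility \eqref{eq:admineq}; varying $e$ over a one-parameter family of distinct admissible profiles produces infinitely many distinct admissible $C^\beta$ solutions sharing the initial datum $\tilde v_0$, which is therefore a wild initial datum per Definition \ref{def:wild}. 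Since $\|\tilde v_0-v_0\|_{L^2}<\eps$, density in $L^2$ follows.

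\textbf{Principal obstacle.} The main technical work lies in the subsolution step: the perturbation $w_0$ must be simultaneously small in $L^2$, H\"older continuous, and geometrically compatible with the strict positivity of the ensuing stress so that the BDSV induction can start. Balancing the frequency $\lambda_0$ of $w_0$, the length $\tau$ of the ramp during which $\mathring{\bar R}$ transitions from $0$ to strictly positive, and the base parameters of the iteration is delicate. In particular, one must verify that the temporal gluing step of \cite{Ise,BDSV} can be performed while $\mathring{\bar R}$ vanishes to sufficient order at $t=0$, so that the initial trace is preserved exactly through the inductive scheme.
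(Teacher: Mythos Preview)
Your overall architecture --- build a subsolution whose Reynolds stress vanishes at $t=0$, then run a $C^\beta$ convex integration that preserves the initial trace --- matches the paper's ``double convex integration'' strategy. But both halves of your proposal contain genuine gaps that are, in fact, the main technical content of the paper.

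\textbf{The subsolution step.} You propose to obtain the starting subsolution by adding a single high-frequency perturbation $w_0$ to a mollified datum and asserting $\mathring{\bar R}(\cdot,0)=0$. This does not work. A single Mikado/Beltrami layer cannot produce a Reynolds stress that is simultaneously (i) exactly zero at $t=0$, (ii) strictly positive for $t>0$, and (iii) compatible with the $C^\beta$ inductive estimates \eqref{eq:2.11}--\eqref{eq:2.14} needed to start the BDSV iteration. The paper replaces your single $w_0$ by an \emph{entire} time-localized convex integration scheme (Section \ref{sect:stradapt}), whose limit is an \emph{adapted subsolution} (Definition \ref{d:adapted}): a subsolution with $R(\cdot,0)=0$, $\rho(t)>0$ for $t>0$, and the quantified blow-up $\|v\|_{1+\alpha}\leq C\rho^{-(1+\nu)}$, $|\partial_t\rho|\leq C\rho^{-\nu}$ with $\nu>\tfrac{1-3\beta}{2\beta}$. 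These rates are not cosmetic; they are exactly what allows the second scheme to run up to $t=0$ without destroying the initial datum.

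\textbf{The iteration step.} Your claim that ``perturbations are supported where $\mathring{\bar R}\neq 0$ and vanish near $t=0$'' is false for the unmodified BDSV scheme: both the gluing step and the Mikado perturbation in \cite{BDSV} act globally in time and would alter $v(\cdot,0)$. The paper develops time-localized versions of both (Propositions \ref{prop:gluing} and \ref{prop:pert}), with cutoffs whose placement is dictated by the current size of $\rho_q$; this is where the adapted-subsolution blow-up rates are used (Section \ref{sect:adaptsol}). A further subtlety you miss, flagged in the introduction, is that Isett's gluing spoils uniform control of the energy profile, so your plan to manufacture non-uniqueness by freely varying $e(\cdot)$ is not available here. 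The paper instead obtains infinitely many solutions via $H^{-1}$-convergence to the adapted subsolution (Corollary \ref{cor:wild}).

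In short, what you label ``Principal obstacle'' is not a residual technicality but the substance of the paper: constructing adapted subsolutions (Proposition \ref{prop:stradapt}) and passing from them to solutions with fixed initial datum (Proposition \ref{prop:adaptsol}) each require a full localized scheme, not a single perturbation plus off-the-shelf BDSV.
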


Previous work on existence and density of wild initial data has been done in \cite{DS10,SzW} for bounded $L^\infty$ weak solutions, and in \cite{Dan,DanSz} for H\"older continuous weak solutions. The underlying idea is the following: iteration schemes based on convex integration, as in \cite{DS09,DS13,DS14,BDIS,Ise,BDSV}, start with a subsolution (see Section \ref{sect:subs} below) and, by a sequence of high-frequency perturbations   produce weak solutions of the Euler equations in the limit. Thus, analogously to the celebrated Nash-Kuiper isometric embedding theorem \cite{Nash} (see also \cite{ContiDSz}), such schemes not only produce one weak solution, but automatically a whole sequence of weak solutions, which converge weakly to the initial subsolution - this is referred to as a weak form of h-principle. In fluid mechanics terms the subsolution can thus be interpreted as an averaged, coarse-grained flow, with perturbations acting as fluctuations. This interpretation is explained in detail in the surveys \cite{DS12,Szn,DS17}. 

For the Cauchy problem the notion of subsolution then needs to be modified so that, at the initial time $t=0$, the subsolution already agrees with the solution. One possibility to achieve this is to first construct such a subsolution, together with its wild initial datum, by a time-restricted convex integration scheme, and then, by a second convex integration scheme pass from this subsolution to weak solutions. This ``double convex integration'' strategy, introduced in \cite{DS10} for bounded weak solutions, was first extended to H\"older spaces in \cite{Dan}. It is worth pointing out that such an extension requires substantial technical modifications, as H\"older schemes for Euler as in \cite{DS14,BDIS,Ise,BDSV} are based on rather precise estimates on the H\"older norms along the iteration sequence, whereas schemes producing bounded solutions \cite{DS09} are rather ``soft'' in comparison and can be based on an application of the Baire category theorem. We emphasize that this strategy is required to show that there exists a dense set of initial data for which the solution is non-unique. If in contrast one is only interested in proving the non-uniqueness for a single initial data, simpler strategies exist, see for instance \cite{CDD}.

In \cite{Dan} the author was able to show the existence of infinitely many $\sfrac{1}{10}^-$ H\"older initial data which are wild in the sense that to any such initial datum there exist infinitely many $\sfrac{1}{16}^-$ H\"older solutions satisfying \eqref{eq:EE}. Then, based on the uniform estimates in \cite{BDIS} for obtaining $\sfrac15^-$ weak solutions, in \cite{DanSz} the authors were able to show the statement of Theorem \ref{thm:main} above for all $\beta<1/5$. 

In this paper we adapt the technique used in \cite{DanSz} and combine with the convex integration scheme presented in \cite{BDSV} in order to prove Theorem \ref{thm:main}. In light of Onsager's conjecture this shows that (wild) non-uniqueness for the Euler equations is implied by the possibility of anomalous dissipation. 

A few words on our proof. As in \cite{DanSz} we rely on the notion of adapted subsolution, which quantifies the relationship between loss of regularity and the size of the Reynolds stress term. In order to reach any exponent $\beta<1/3$ we use the gluing technique introduced in \cite{Ise} in combination with Mikado flows, introduced in \cite{DanSz}. Although naively one might expect that the step from $\sfrac15$ to $\sfrac13$ should be a minor technical improvement, based on the improvements from the construction of $\sfrac15$-H\"older admissible weak solutions in \cite{BDIS} to $\sfrac13$-H\"older admissible weak solutions in \cite{BDSV}, there are a couple of substantial difficulties we needed to overcome. The main new challenge stems from the fact that, whilst the construction in \cite{BDIS} (used in \cite{DanSz}) is purely kinematic, making the time-localization rather straight forward, the construction in \cite{BDSV} has a crucial dynamic component (the ``gluing argument'' of Isett introduced in \cite{Ise}). This leads to the following difficulties: 
\begin{itemize}
	\item A consequence of the gluing technique of Isett in \cite{Ise} is that, along the scheme, one does not have uniform control over the energy (and the energy gap). Indeed, this lack of control of the energy profile led to the conjecture that for such weak solutions the time-regularity should generically be minimal (see \cite{IseO}). This means that in our scheme the mollification step has to be done with a time-dependent parameter.
	\item In the schemes in \cite{BDIS,DanSz} the presence of high-frequency oscillations immediately leads to the approximation result and hence to non-uniqueness. In contrast, the additional gluing step in \cite{BDSV} means that the weak solutions so obtained do not approximate in a weak norm. To overcome this problem requires introducing an additional step in passing from adapted subsolutions to weak solutions.      
\end{itemize}   
   
This paper is organized as follows: In Section \ref{sect:prel} we set the notation and recall from \cite{DanSz} the construction of the Mikado flows.
In Section \ref{sect:subs} we define the different notions of subsolutions (namely, strict, strong and adapted), we state the main Propositions allowing to approximate one concept of subsolution with another one and in the end we show how to obtain from such propositions the main Theorem \ref{thm:main}. 
In Section \ref{sect:strstr} we show how to approximate a strict subsolution with a strong subsolution.
Sections \ref{sect:gluing} and  \ref{sect:pert} contain respectively the localized gluing and localized perturbation steps needed in the double convex integration scheme. In Section \ref{sect:stradapt} we show how to obtain an adapted subsolution from a strong subsolution and in Section \ref{sect:adaptsol} how to construct solutions with the same initial datum of an adapted subsolution.

\subsection*{Acknowledgements}
L. Sz. gratefully acknowledges the support of Grant Agreement No. 724298-DIFFINCL of the European Research Council. 

\section{Preliminary results}
\label{sect:prel}

\subsection{Notation} 
Throughout this paper our spatial domain is $\T=\R^3/(2\pi\Z)^3$ the three-dimensional flat torus. 

\smallskip

We denote by $\mathcal S^{3\times 3}$ the set of symmetric $3\times3$ matrices, $\mathcal S^{3\times3}_0$ is the set of symmetric trace-free matrices,  $\mathcal S^{3\times 3} _+$ are the symmetric positive definite ones and $\mathcal S^{3\times 3} _{\geq0}$ are the symmetric positive semidefinite ones. Given a matrix $R\in\mathcal S^{3\times 3}$, we denote by $\tr R$ its trace and we often use the decomposition
\[
R=\tfrac13\tr R\,\Id+\mR=\rho\,\Id+\mR,
\]
where $\mR\in \mathcal S^{3\times 3}_0$ is the traceless part of $R$ (the projection of $R$ onto  $\mathcal S^{3\times3}_0$) and 
$\Id$ denotes the $3\times 3$ identity matrix. 

\smallskip

We recall the usual (spatial) H\"older spaces. Let $m=0,1,2,\dots$, $\alpha\in(0,1)$ and $\theta$ is a multi-index. For $f:\T\times[0,T]\to\R^3$ we denote by $\|f\|_0=\sup_{\T\times[0,T]} |f(x,t)|$. The H\"older seminorms are defined as
\begin{align*}
[f]_m&=\max_{|\theta|=m}\|D^\theta f\|_0,\\
[f]_{m+\alpha}&=\max_{\theta=m}\sup_{x\neq y,t}\frac{|D^\theta f(x,t)-D^\theta f(y,t)|}{|x-y|^\alpha},
\end{align*}
where $D^\theta=\partial_{x_1}^{\theta_1}\partial_{x_2}^{\theta_2}\partial_{x_3}^{\theta_3}$ are spatial partial derivatives. The H\"older norms are then given by
\begin{align*}
\|f\|_m=\sum_{j=0}^m[f]_j,\quad \|f\|_{m+\alpha}=\|f\|_m+[f]_{m+\alpha}.
\end{align*}
If the time-dependence is to be made explicit, we will write $[f(t)]_\alpha$, $\|f(t)\|_\alpha$, etc. 

We will use the following standard inequalities for H\"older norms:
\begin{align*}
[fg]_r&\leq C([f]_r\|g\|_0+\|f\|_0[g]_r),\\
[f]_s&\leq C\|f\|_0^{1-s/r}[f]_r^{s/r},
\end{align*}
for $0\leq s\leq r$. 
Moreover, for $f:\T\times[0,T]\to \mathcal{S}\subset \R^d$ and $\Psi:\mathcal{S}\to\R$,  for the composition we have
\begin{align*}
	[\Psi\circ f]_m&\leq C([\Psi]_1\|Df\|_{m-1}+\|D\Psi\|_{m-1}\|f\|_0^{m-1}\|f\|_m),\\
	[\Psi\circ f]_m&\leq C([\Psi]_1\|Df\|_{m-1}+\|D\Psi\|_{m-1}[f]_1^m).
\end{align*}
We also recall the following estimates on mollification.
\begin{proposition}\label{p:mollify}
Let  $\varphi\in C_c^\infty(\R^3)$ be non-negative, symmetric and such that $\int\varphi=1$. Then for any $r,s\geq 0$ we have
\begin{equation}\nonumber
\|f*\varphi_\ell\|_{r+s}\leq C\ell^{-s}\|f\|_r,
\end{equation} 
\begin{equation}\label{e:mollify2}
\|f-f*\varphi_\ell\|_r \leq C\ell^{2}\|f\|_{r+2},
\end{equation}
\begin{equation*}
\|(fg)*\varphi_\ell-(f*\varphi_\ell)(g*\varphi_\ell)\|_r\leq C\ell^{2 -r}\|f\|_1\|g\|_1.
\end{equation*}
The constant $C$ depends only on $r$ and $s$. 
\end{proposition}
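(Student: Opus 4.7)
The three inequalities are classical mollification estimates; my plan is to treat them in order of increasing difficulty, relying throughout on the scaling identity $D^\theta \varphi_\ell(y)=\ell^{-3-|\theta|}(D^\theta\varphi)(y/\ell)$ together with $\int|D^\theta\varphi_\ell|\,dy\leq C\ell^{-|\theta|}$.

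For the first inequality, I would write $r=m+\alpha$ and $r+s=n+\beta$ with $m,n$ integers and $\alpha,\beta\in[0,1)$, then distribute the $n$ derivatives in $D^\theta(f*\varphi_\ell)$ so that at most $m$ derivatives fall on $f$ and the remaining $n-m$ fall on $\varphi_\ell$. Each derivative moved onto $\varphi_\ell$ costs a factor of $\ell^{-1}$ by the scaling identity. A symmetric argument for the Hölder seminorm (or a standard interpolation) handles the fractional parts and yields the bound $C\ell^{-s}\|f\|_r$.

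For the second inequality, the key point is that $\varphi$ is symmetric, so $\int y\,\varphi_\ell(y)\,dy=0$. Writing
\[
f(x)-(f*\varphi_\ell)(x)=\int\bigl[f(x)-f(x-y)\bigr]\varphi_\ell(y)\,dy
\]
and Taylor-expanding $f(x-y)$ to second order (when $r+2$ is an integer) or combining a first-order Taylor expansion with a Hölder-difference estimate on $Df$ (for fractional $r+2$), the zero first moment kills the linear term and leaves a remainder bounded by $C\ell^2\|f\|_{r+2}$. Differentiating the convolution and repeating the argument on $D^\theta f$ in place of $f$ gives the full $C^r$ estimate.

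The commutator estimate is the most delicate. The symmetrization trick gives the pointwise identity
\[
(fg)*\varphi_\ell-(f*\varphi_\ell)(g*\varphi_\ell)=\tfrac12\iint\bigl[f(x-y)-f(x-z)\bigr]\bigl[g(x-y)-g(x-z)\bigr]\varphi_\ell(y)\varphi_\ell(z)\,dy\,dz,
\]
and since both differences are controlled by $C\ell\|f\|_1$ and $C\ell\|g\|_1$ respectively on the support of $\varphi_\ell(y)\varphi_\ell(z)$, we immediately obtain the case $r=0$. For $r>0$ I would differentiate under the integral; since $f,g$ are only assumed to be $C^1$, all the $r$ derivatives must be moved onto the two mollifier factors $\varphi_\ell(y)\varphi_\ell(z)$, each derivative costing $\ell^{-1}$, giving the final factor $\ell^{2-r}$. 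The main subtlety — and the step I would spend most care on — is the fractional case and the case when some of the derivatives act on each factor separately: one has to use the product rule on $(f*\varphi_\ell)(g*\varphi_\ell)$ to reduce matters to commutators of lower order, at which point the $C^1$ bound on $f$ and $g$ combined with the first estimate closes the induction on $r$.
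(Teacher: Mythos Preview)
The paper does not prove this proposition; it is stated as a standard recall of mollification estimates (note the sentence ``We also recall the following estimates on mollification'' preceding it), so there is no proof to compare against. Your outline is the standard argument and is correct.

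One small clarification on the commutator estimate: in the symmetrized form as you wrote it, the factors $\varphi_\ell(y)\varphi_\ell(z)$ do not depend on $x$, so the phrase ``move the derivatives onto the mollifiers'' is not literally correct as stated. What makes it work is the change of variables $u=x-y$, $v=x-z$, after which the differences $f(u)-f(v)$ and $g(u)-g(v)$ carry no $x$-dependence and all $D_x^\theta$ derivatives land on $\varphi_\ell(x-u)\varphi_\ell(x-v)$, each costing $\ell^{-1}$ in $L^1$. With this understood, the alternative route you sketch at the end (product rule and induction on lower-order commutators) is unnecessary; the direct argument already closes.
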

Next, we recall that $H^{-1}(\T)$ is the dual space of $H^1_0(\T)$, the Sobolev space of periodic functions with average zero, with norm
\[
\|f\|_{H^{-1}}=\sup_{\|\varphi\|_{H^1_0}\leq1}\int_{\T}f\varphi\,dx.
\]

\subsection{Mikado flows}
\label{subs:mikado}

We recall Mikado flows, the basic building blocks for the convex integration scheme introduced in \cite{DanSz}.

\begin{lemma}\label{lemma:mikado}
	For any compact subset $\mathcal N\subset\subset\mathcal S^{3\times 3}_+$ there exists a smooth vector field 
	\[
	W:\mathcal N\times\T\to\R^3
	\]
	such that, for every $R\in\mathcal N$
	\begin{equation}\label{eq:mik1}
	\left\{\begin{aligned}
	\div_\xi(W(R,\xi)\otimes W(R,\xi))&=0\\
	\div_\xi W(R,\xi)&=0
	\end{aligned}\right.
	\end{equation}
	and 
	\begin{equation}\label{eq:mik2}
	\fint_{\T}W(R,\xi)\,d\xi=0,
	\end{equation}
	\begin{equation}\label{eq:mik3}
	\fint_{\T}W(R,\xi)\otimes W(R,\xi)\,d\xi=R.
	\end{equation}
\end{lemma}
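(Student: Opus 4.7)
My plan is to construct $W(R,\xi)$ as a superposition of ``pipe flows'', each supported in a thin tubular neighborhood of a closed geodesic on $\T$ in a rational direction $k_j$, with pairwise disjoint supports. The construction relies on a geometric decomposition lemma for positive definite matrices, a choice of disjoint rational pipes, and a normalization of scalar profiles.

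First, I would invoke the classical geometric lemma (as used, e.g., in the Nash/Beltrami-type decompositions throughout the convex integration program for Euler): for the compact set $\mathcal N\subset\subset\S^{3\times 3}_+$ one can find finitely many pairwise non-parallel vectors $k_1,\dots,k_N\in\Z^3\setminus\{0\}$ and smooth non-negative functions $\gamma_j\in C^\infty(\mathcal N)$ such that
\[
R=\sum_{j=1}^N\gamma_j^2(R)\,k_j\otimes k_j,\qquad R\in\mathcal N.
\]
For each $j$ I pick an offset $\xi_j^0\in\T$ so that the closed geodesics $L_j:=\{\xi_j^0+tk_j:t\in\R\}/(2\pi\Z^3)$ are pairwise disjoint; this is a generic condition on the tuple $(\xi_1^0,\dots,\xi_N^0)$, since each $L_j$ is $1$-dimensional in the $3$-dimensional torus. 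Let $U_j$ be sufficiently small mutually disjoint tubular neighborhoods of $L_j$. On each $U_j$ I construct a smooth profile $\phi_j:\T\to\R$ with $\supp\phi_j\subset U_j$, depending only on the two coordinates transverse to $k_j$ (so that $k_j\cdot\nabla_\xi\phi_j\equiv 0$), and satisfying $\fint_\T \phi_j=0$ and $\fint_\T \phi_j^2=1$. The zero-mean condition with compact support in $U_j$ is realized by taking $\phi_j=\eta_j^+-\eta_j^-$ with $\eta_j^\pm\geq 0$ equal-integral bumps supported in disjoint subregions of $U_j$, followed by a scalar rescaling.

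With these ingredients I set
\[
W(R,\xi):=\sum_{j=1}^N\gamma_j(R)\,\phi_j(\xi)\,k_j.
\]
The verification is then purely algebraic: $\div_\xi W=\sum_j\gamma_j(R)(k_j\cdot\nabla_\xi\phi_j)=0$; the pairwise disjointness of the supports $\supp\phi_j$ eliminates cross terms in the tensor product, giving $W\otimes W=\sum_j\gamma_j^2(R)\,\phi_j^2(\xi)\,k_j\otimes k_j$, whence $\div_\xi(W\otimes W)=\sum_j\gamma_j^2(R)\, k_j(k_j\cdot\nabla_\xi\phi_j^2)=0$ and \eqref{eq:mik3} follows from $\fint\phi_j^2=1$ combined with the geometric decomposition; finally \eqref{eq:mik2} is immediate from $\fint\phi_j=0$. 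Smoothness of $W$ in $R$ is inherited from that of the $\gamma_j$.

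The main obstacle I anticipate is the disjointness step: ensuring that thin pipes around closed geodesics in prescribed rational directions $k_j$ can simultaneously be made mutually disjoint on $\T$ with a common uniform transverse width. This is genuinely geometric — rational lines can have short closed orbits, and there are $\binom{N}{2}$ pairwise intersection constraints to avoid — but it is handled by a dimension-count/genericity argument on the offsets $\xi_j^0$, followed by a compactness argument to fix a uniform tube radius. Once the pipes are in place, everything reduces to the algebraic identities above.
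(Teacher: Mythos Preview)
Your proposal is correct and is essentially the original Mikado construction from \cite{DanSz}, which is exactly what the paper invokes here: the paper does not reprove Lemma~\ref{lemma:mikado} but simply recalls it from \cite{DanSz}. Your outline---geometric rank-one decomposition $R=\sum_j\gamma_j^2(R)\,k_j\otimes k_j$ on $\mathcal N$, disjoint periodic pipes around rational lines $\xi_j^0+\R k_j$, and scalar profiles constant along $k_j$ with $\fint\phi_j=0$, $\fint\phi_j^2=1$---matches that construction, and your verification of \eqref{eq:mik1}--\eqref{eq:mik3} via disjoint supports is the standard one.
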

 Using the fact that $W(R,\xi)$ is $\T$-periodic and has zero mean in $\xi$, we write 
 \begin{equation}\label{eq:mikFourier}
 W(R,\xi)=\sum_{k\in\Z^3\setminus\{0\}}a_k(R)A_ke^{ik\cdot\xi}
 \end{equation}
 for some coefficients $a_k(R)$ and complex vector $A_k\in\mathbb C^3$, satisfying $A_k\cdot k=0$ and $|A_k|=1$. From the smoothness of $W$ we further infer
 \begin{equation*}
 \sup_{R\in\mathcal N}|D^N_Ra_k(R)|\leq\frac{ C(\mathcal N,N,m)}{|k|^m}
 \end{equation*}
 for some constant $C$ which depends only on $\mathcal N, N$ and $m$.
 
 \begin{remark}\label{rem:Mconst}
 	The choice of $\mathcal N=B_{1/2}(\Id)$, together with the choice of $N$ and $m$ determines the constant $M$ in Proposition \ref{prop:pert}.
 \end{remark}

Using the Fourier representation we see that from \eqref{eq:mik3}
\begin{equation}
\label{eq:Ck}
W(R,\xi)\otimes W(R,\xi)=R+\sum_{k\neq0}C_k(R)e^{ik\cdot\xi}
\end{equation}
where 
\begin{equation*}
C_k k=0\quad\text{and}\quad\sup_{R\in\mathcal N}|D_R^NC_k(R)|\leq\frac{C(\mathcal N,N,m)}{|k|^m}
\end{equation*}
for any $m,N\in\N$.

\subsection{The operator $\mathcal{R}$}
\label{ss:R}

We recall also the definition of the operator $\mathcal R$ from Section 4.5 in \cite{DS13}.

\begin{definition}
	Let $v\in C^\infty(\T;\R^3)$ be a smooth vector field. We define $\mathcal Rv$ to be the matrix valued periodic function
	\begin{equation}\label{eq:Rdef}
	\mathcal Rv:=\frac14(\nabla \mathcal Pu+(\nabla \mathcal Pu)^T)+\frac34(\nabla u+(\nabla u)^T)-\frac12(\div u)\Id,
	\end{equation}
	where $u\in C^\infty(\T;\R^3)$ is the solution of 
	\[
	\triangle u=v-\fint_{\T}v\quad\text{ in }\T
	\]
	with $\int_{\T}u=0$ and $\mathcal P$ is the Leray projection onto divergence-free fields with zero average.
\end{definition}
\begin{lemma}
	For any $v\in C^\infty(\T;\R^3)$ the tensor $\mathcal Rv$ is symmetric and trace-free,   and $\div \mathcal Rv=v-\fint_{\T} v$.
\end{lemma}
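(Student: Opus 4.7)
My plan is to verify the three claims in the order (i) symmetry, (ii) trace-freeness, (iii) the divergence identity, with the last being the one that actually uses the choice of coefficients in \eqref{eq:Rdef}. Symmetry is immediate from inspection: each of the three summands in the definition of $\mathcal R v$ --- two symmetrized gradients and a scalar multiple of $\Id$ --- is symmetric by construction, so their linear combination is as well. For trace-freeness I would use $\tr(\nabla w+(\nabla w)^T)=2\,\div w$ applied to $w=\mathcal P u$ and $w=u$, together with $\div \mathcal P u=0$ and $\tr\Id=3$. The first summand then contributes nothing, the second contributes $\tfrac{3}{4}\cdot 2\,\div u=\tfrac{3}{2}\div u$, and the third contributes $-\tfrac{3}{2}\div u$, so they cancel.

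For the divergence I would compute componentwise using the convention $(\div A)_i=\partial_j A_{ij}$, which yields the two elementary rules $(\div(\nabla w))_i=\Delta w_i$ and $(\div(\nabla w)^T)_i=\partial_i \div w$, as well as $(\div((\div w)\Id))_i=\partial_i\div w$. Applying these with $w=\mathcal P u$ (where the transposed part drops out because $\div \mathcal P u=0$) and with $w=u$, and using that $\mathcal P$ commutes with $\Delta$ so that $\Delta \mathcal P u=\mathcal P \Delta u=\mathcal P(v-\fint_\T v)=\mathcal P v$, I obtain
\[
\div \mathcal R v \;=\; \tfrac14\,\mathcal P v \;+\; \tfrac34\bigl(v-\tfrac{}{}\textstyle\fint_\T v\bigr) \;+\; \bigl(\tfrac34-\tfrac12\bigr)\nabla \div u.
\]

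The substantive step, which I would flag as the main (though mild) obstacle, is to re-express $\nabla\div u$ in terms of $v$. Taking the divergence of $\Delta u=v-\fint_\T v$ gives $\Delta(\div u)=\div v$, so $\div u$ coincides with the scalar potential $q$ in the Helmholtz decomposition $v-\fint_\T v=\mathcal P v+\nabla q$ (both sides having zero mean). Hence $\nabla \div u=(v-\fint_\T v)-\mathcal P v$. Plugging this into the previous display, the $\mathcal P v$ contribution from the first and third summands cancels, while the remaining $(v-\fint_\T v)$ pieces sum to $\tfrac34+\tfrac14=1$, producing $\div \mathcal R v=v-\fint_\T v$. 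The point of the proof is therefore that the coefficients $\tfrac14,\tfrac34,-\tfrac12$ in \eqref{eq:Rdef} are tuned precisely so that these cancellations happen while simultaneously preserving trace-freeness; once this algebraic observation is made, everything else is bookkeeping.
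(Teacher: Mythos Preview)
Your argument is correct. The paper does not give its own proof of this lemma; it is recalled from \cite{DS13} and stated without proof, so there is nothing to compare beyond noting that your direct verification---checking symmetry and trace-freeness by inspection, then computing the divergence via $\div(\nabla w)=\Delta w$, $\div(\nabla w)^T=\nabla\div w$, and identifying $\nabla\div u$ with the gradient part of the Helmholtz decomposition of $v-\fint_\T v$---is exactly the standard computation and is complete as written (modulo the stray \verb|\tfrac{}{}| in your displayed formula, which you should clean up).
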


The following proposition is a consequence of classical stationary phase techniques. For a detailed proof see \cite{DanSz}, Lemma 2.2. 

\begin{proposition}\label{prop:stat}
Let $\alpha\in(0,1)$ and $N\geq1$. Let $a\in C^\infty(\T)$, $\Phi\in C^\infty(\T;\R^3)$ be smooth functions and assume that 
\[
\bar C^{-1}\leq|\nabla\Phi|\leq \bar C
\]
holds on $\T$. Then
\begin{equation}\label{eq:aphiint}
\Big|\int_{\T}a(x)e^{ik\cdot\Phi}\,dx\Big|\leq C\frac{\|a\|_N+\|a\|_0\|\Phi\|_N}{|k|^N}
\end{equation}
and for the operator $\mathcal R$ defined in \eqref{eq:Rdef}, we have
\begin{equation}
\Big\|\mathcal R\Big(a(x)e^{ik\cdot\Phi}\Big)\Big\|_\alpha\leq C\frac{\|a\|_0}{|k|^{1-\alpha}}+C\frac{\|a\|_{N+\alpha}+\|a\|_0\|\Phi\|_{N+\alpha}}{|k|_N-\alpha},
\end{equation}
where the constant $C$ depends on $\bar C$, $\alpha$ and $N$ but not on $k$.
\end{proposition}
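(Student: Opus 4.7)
The plan is to prove both estimates by the \emph{non-stationary phase} method: since $\nabla\Phi$ is invertible with bounded inverse, $e^{ik\cdot\Phi}$ oscillates on the scale $|k|^{-1}$, and integration by parts converts this rapid oscillation into decay in $|k|$. The key auxiliary field is
\[
\xi:=\frac{(\nabla\Phi)^T k}{i\,|(\nabla\Phi)^T k|^2},
\]
which satisfies $|\xi|\le C(\bar C)|k|^{-1}$ by hypothesis, with its $C^m$-norms controlled via the H\"older composition inequalities already recalled. The crucial identity $\xi\cdot\nabla e^{ik\cdot\Phi}=e^{ik\cdot\Phi}$ shows that $L:=\xi\cdot\nabla$ is a right inverse of the identity on the oscillatory factor, with formal adjoint $L^*u=-\div(\xi u)$.

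For \eqref{eq:aphiint}, I would integrate by parts $N$ times to get
\[
\int_{\T}a(x)\,e^{ik\cdot\Phi}\,dx=\int_{\T}(L^*)^N a(x)\cdot e^{ik\cdot\Phi}\,dx,
\]
and then expand $(L^*)^N a$ by Leibniz. Each application of $L^*$ contributes a factor $|k|^{-1}$ and either a derivative of $a$ or a derivative of $\xi$; since $\xi$ is a nonlinear function of $\nabla\Phi$, its $C^m$-norm is controlled by $|k|^{-1}\|\Phi\|_{m+1}$ up to products of lower-order derivatives. Assembling these bounds via the composition estimates yields
$\|(L^*)^Na\|_0\le C(\bar C,N)\,|k|^{-N}\bigl(\|a\|_N+\|a\|_0\|\Phi\|_N\bigr)$, and \eqref{eq:aphiint} then follows from $|e^{ik\cdot\Phi}|=1$.

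For the estimate on $\mathcal R$, I would iterate the divergence identity
\[
a e^{ik\cdot\Phi}=\div(a\xi\, e^{ik\cdot\Phi})-\div(a\xi)\, e^{ik\cdot\Phi}
\]
$N$ times on the remainder to produce a decomposition
\[
a e^{ik\cdot\Phi}=\div(V_N\, e^{ik\cdot\Phi})+\rho_N\, e^{ik\cdot\Phi}
\]
with $\|V_N\|_0\le C|k|^{-1}\|a\|_0$ and $\|\rho_N\|_0\le C|k|^{-N}(\|a\|_N+\|a\|_0\|\Phi\|_N)$. One then applies $\mathcal R$ termwise. Since $\mathcal R$ is a classical antidivergence of pseudodifferential order $-1$, bounded $C^\alpha\to C^\alpha$ on mean-zero fields, the principal divergence piece yields a leading bound of size $\|a\|_0/|k|^{1-\alpha}$, where the factor $|k|^\alpha$ arises from interpolating between $C^0$ and $C^\alpha$ of the oscillatory factor. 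For the remainder, one further $\mathcal R$-antidivergence step together with the same $C^0$-$C^\alpha$ interpolation produces the second contribution $(\|a\|_{N+\alpha}+\|a\|_0\|\Phi\|_{N+\alpha})/|k|^{N-\alpha}$.

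The main obstacle is the derivative bookkeeping: since $\xi$ depends nonlinearly on $\nabla\Phi$ through the factor $|(\nabla\Phi)^T k|^{-2}$, each iteration of $L^*$ (or of the divergence identity) produces a combinatorial proliferation of terms involving products of derivatives of $\Phi$. Showing that the final bound depends only on $\|\Phi\|_N$ and the fixed constant $\bar C$, rather than on uncontrolled products of intermediate H\"older norms, requires a careful — though routine — application of the H\"older product and composition inequalities. The full computation is carried out in detail in \cite{DanSz}, Lemma 2.2.
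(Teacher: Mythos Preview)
Your proposal is correct and aligns with the paper: the paper does not supply its own proof here but simply refers to \cite{DanSz}, Lemma 2.2, for the detailed stationary-phase argument, which is exactly the integration-by-parts scheme you sketch and the reference you end with.
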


\section{Subsolutions and proofs of the main results}
\label{sect:subs}

In this section we introduce the various notions of subsolutions needed to perform the convex integration schemes, and state the main propositions which allow us to pass from one subsolutions to a stronger one. The combination of these propositions then leads to our main theorem, as in \cite{DanSz}.

The first notion of subsolution is the same as that defined in \cite{DanSz} and coincides with the notion of subsolution introduced in \cite{DS12}.
\begin{definition}
	[Strict subsolution] A subsolution is a triple 
	\[
	(v,p,R):\T\times(0,T)\to\R^3\times\R\times\mathcal S^{3\times3}
	\]
	such that $v\in L^2_{\mathrm{loc}}$, $R\in L^1_{\mathrm{loc}}$, $p$ is a distribution, the equations
	\begin{equation}\label{eq:ER}
	\begin{split}
	\partial_tv+\div (v\otimes v)+\nabla p&=-\div R\\
	\div v&=0
	\end{split}
	\end{equation}
	hold in the sense of distributions in $\T\times(0,T)$ and moreover $R\geq0$ a.e.. If $R>0$ a.e., then the subsolution is said to be strict.
\end{definition}
The next notion of subsolution is similar to the one defined in \cite{DanSz},  differing only in point \eqref{eq:RtrR}.

\begin{definition}[Strong subsolution]
	A strong subsolution with parameter $\gamma>0$ is a subsolution $(v,p,R)$ such that in addition $\tr R$ is a function of time only and, if 
	\[
	\rho(t):=\frac13\tr R,
	\]
	then 
	\begin{equation}\label{eq:RtrR}
	\Big|\mR(x,t)\Big|\leq \rho^{1+\gamma}(t)\quad\forall\,(x,t).
	\end{equation}	
\end{definition}

\begin{remark}\label{rem:gamma}
	In our schemes $\rho$ will be sufficiently small so that in particular $\rho^\gamma\leq r_0$, where $r_0$ is the geometric constant in \cite{DanSz}. Therefore \eqref{eq:RtrR} implies that our strong subsolutions satisfy Definition 3.2 in \cite{DanSz}.
Note also that if $(v,p,R)$ is a strong subsolution for some parameter $\gamma>0$, then also for any $\gamma'$ with $0<\gamma'<\gamma$.  
	\end{remark}

The next notion of subsolution has vanishing Reynolds stress at time $t=0$ and the $C^1$-norms blow up at certain rates as the Reynolds stress goes to zero. Such adapted subsolutions have been introduced in \cite{DanSz}, but this time the blow-up rate is different because it has to be consistent with a $C^{1/3-\eps}$-scheme rather than a $C^{1/5-\eps}$-scheme as in \cite{DanSz}. 

\begin{definition}[Adapted subsolution]\label{d:adapted}
	Given $\gamma>0$, $0<\beta<1/3$, and ${\nu}$ satisfying 
	\begin{equation}
	{\nu}>\frac{1-3\beta}{2\beta}
	\end{equation}
	we call a triple $(v,p,R)$ a $C^\beta$-adapted subsolution on $[0,T]$ with parameters $\gamma$ and ${\nu}$ if 
	\[
	(v,p,R)\in C^\infty(\T\times (0,T])\cap C(\T\times[0,T])
	\] 
	is a strong subsolution with parameter $\gamma$ with initial data
	\[
	v(\cdot,0)\in C^\beta(\T),\quad R(\cdot,0)\equiv 0
	\]
	and, setting $\rho(t):=\frac13 \tr R(x,t)$, for all $t>0$ we have $\rho(t)>0$ and there exists $\alpha\in (0,1)$ and $C\geq 1$ such that
	\begin{align}
\|v\|_{1+\alpha}&\leq C\rho^{-(1+{\nu})}\,,\label{eq:adaptv}\\
	|\partial_t \rho|& \leq C\rho^{-{\nu}}\,.\label{eq:adaptdtr}
	\end{align}
\end{definition}

The heuristic is as follows (see also \cite{CSz}): the Reynolds stress $R$ in the subsolution is proportional to the kinetic energy gap, so that $\rho\sim |w|^2$, where $w$ is the fluctuation, i.e.~the perturbation (obtained by convex integration) required so that $v+w$ is a solution. Therefore \eqref{eq:adaptv}, taking $\alpha=\nu=0$ for simplicity, is consistent with the scaling $|\nabla w|\lesssim |w|^{-2}$. In other words we expect $|\nabla |w|^3|\lesssim 1$.  

Our first proposition shows that one can approximate a smooth strict subsolution with an adapted subsolution.

\begin{proposition}\label{prop:stradapt}
	Let $(v,p,R)$ be a smooth strict subsolution on $[0,T]$. Then, for any $0<\beta<1/3$, $\nu>\frac{1-3\beta}{2\beta}$  and $\delta>0$ there exists $\gamma>0$ and a $C^{\beta}$-adapted subsolution $(\hat v,\hat p,\hat R)$ with parameters $\gamma,\nu$ such that $\hat\rho\leq \delta$ and 
	\begin{align*}
	\int_{\T}|\hat v|^2+\tr\hat R=\int_{\T} |v|^2+\tr R&\quad\textrm{ for all }t\in[0,T],\\
	\|v-\hat v\|_{H^{-1}}&<\delta,\\
	\|\hat v\otimes \hat v+\hat R-v\otimes v-R\|_{H^{-1}}&<\delta.
	\end{align*}
\end{proposition}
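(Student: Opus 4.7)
The plan is to chain two convex-integration results: the strict-to-strong approximation of Section \ref{sect:strstr} and the strong-to-adapted construction of Section \ref{sect:stradapt}. First apply the strict-to-strong result to $(v,p,R)$ with tolerance $\delta/2$, producing a smooth strong subsolution $(\tilde v,\tilde p,\tilde R)$ with some parameter $\tilde\gamma>0$, matching total energy, $H^{-1}$-distance to $(v,p,R)$ at most $\delta/2$, and $\tilde\rho:=\tfrac13\tr\tilde R\leq\delta/2$. Then apply the strong-to-adapted step to $(\tilde v,\tilde p,\tilde R)$ with tolerance $\delta/2$ and the prescribed $\beta,\nu$; this yields the required $C^\beta$-adapted subsolution $(\hat v,\hat p,\hat R)$. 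A triangle inequality then gives the announced $H^{-1}$-bounds, the energy identity propagates through both steps, and $\hat\rho\leq\delta$.

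The substance of the argument is in the second step. Starting from a smooth strong subsolution, I would concentrate all the ``wildness'' near $t=0$ by a countable iteration. Fix a target stress profile $\bar\rho(t)\sim t^{1/(1+\nu)}$ compatible with \eqref{eq:adaptdtr}, and on dyadic intervals $I_k=[t_{k+1},t_k]$ with $t_k\downarrow 0$, apply the localized gluing of Section \ref{sect:gluing} followed by the localized perturbation of Section \ref{sect:pert}. At stage $k$ the perturbation uses Mikado flows (Lemma \ref{lemma:mikado}) of amplitude $\sim\bar\rho(t_k)^{1/2}$ and spatial frequency $\lambda_k=\bar\rho(t_k)^{-b}$; the gain $\lambda_k^{\alpha-1}$ from the operator $\mathcal R$ (Proposition \ref{prop:stat}) brings the Reynolds stress down to $\lesssim\bar\rho(t_{k+1})$ on the remaining interval. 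The wild trace $\hat v(\cdot,0)$ is then the $C^\beta$-limit of the accumulated perturbations as $k\to\infty$.

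The main obstacle is the simultaneous calibration of the exponent $b$. Three competing constraints arise: $C^\beta$-summability of the perturbation series demands $b>\tfrac{1}{2\beta}$; the blow-up estimate \eqref{eq:adaptv}, together with $\|\hat v\|_{1+\alpha}\sim\bar\rho(t_k)^{1/2}\lambda_k^{1+\alpha}$, forces $b(1+\alpha)\leq\tfrac{3}{2}+\nu$; and the stress-reduction inequality at each step imposes a lower bound on $b$ controlling the ratio $\bar\rho(t_{k+1})/\bar\rho(t_k)$. All three can be reconciled precisely when $\nu>\tfrac{1-3\beta}{2\beta}$ and $\alpha,\gamma$ are chosen sufficiently small, at which point the strong-subsolution bound \eqref{eq:RtrR} is also maintained throughout the iteration. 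Finally, the $H^{-1}$-smallness and the energy identity propagate because Mikado flows have zero spatial mean by \eqref{eq:mik2} and all modifications are localized in an arbitrarily short time-window near $t=0$.
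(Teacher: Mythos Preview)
Your outline is essentially the paper's own proof: first pass from the strict subsolution to a strong one via Corollary~\ref{cor:strstr}, then iterate the localized gluing (Proposition~\ref{prop:gluing}) and perturbation (Proposition~\ref{prop:pert}) on shrinking intervals $[0,2^{-q}T]$, with a cutoff $\psi_q$ so that only the tail near $t=0$ is modified at stage $q$. The paper packages this as a single inductive construction with hypotheses $(a_q)$--$(f_q)$ and uses the double-exponential parameters $\lambda_q\sim a^{b^q}$, $\delta_q=\lambda_q^{-2\beta}$ of Section~\ref{s:guide}, rather than a prescribed power-law profile $\bar\rho(t)\sim t^{1/(1+\nu)}$; the adapted-subsolution estimates \eqref{eq:adaptv}--\eqref{eq:adaptdtr} are then read off a posteriori from $(c_q)$--$(e_q)$.

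Two inaccuracies to flag. First, your summability inequality has the wrong sign: with $\lambda_k=\bar\rho_k^{-b}$ the increment satisfies $\|v_{k+1}-v_k\|_{C^\beta}\sim\bar\rho_k^{1/2}\lambda_k^\beta=\bar\rho_k^{1/2-b\beta}$, which tends to zero as $\bar\rho_k\to0$ only if $b<\tfrac{1}{2\beta}$, not $b>\tfrac{1}{2\beta}$. (The paper sits exactly at the borderline $\delta_q=\lambda_q^{-2\beta}$ and recovers $C^\beta$ at $t=0$ by interpolation against the $C^{1+\alpha}$ bound in $(e_q)$.) Second, the $H^{-1}$-smallness does \emph{not} come from time localization: the strict-to-strong step and the early iterations modify the subsolution on all of $[0,T]$. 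What actually gives smallness is the high-frequency oscillation together with the $C^0$-bound $(f_q)$; the paper's Step~4 simply sums $\|v_0-v\|_{H^{-1}}+\sum_q\|v_{q+1}-v_q\|_0\lesssim\delta_1^{1/2}$, which is made small by choosing $a\gg1$.
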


The proof will be given in Section \ref{sect:stradapt}.

Next, we show that at the small loss of the exponent $\beta$ one can approximate adapted subsolutions by weak solutions with the same initial datum. 

\begin{proposition}\label{prop:adaptsol} Let $0<\hat\beta<\beta<1/3$, $\gamma>0$, $\eta>0$ and $\nu>0$ with
	\begin{equation*}
	\frac{1-3\beta}{2\beta}<\nu<\frac{1-3\hat\beta}{2\hat\beta}.
	\end{equation*}
There exists $\delta>0$ such that the following holds. 

If $(\hat v,\hat p, \hat R)$ is a $C^{\beta}$-adapted subsolution with parameters $\gamma,\nu$ and $\hat\rho\leq\delta$, then for any $\eta>0$ there exists a $C^{\hat\beta}$-weak solution $v$ of \eqref{eq:EE} with initial datum 
	\[
	v(\cdot,0)=\hat v(\cdot,0),
	\]
	such that
	\begin{align*}
	\int_{\T}|v|^2=\int_{\T}|\hat v|^2&+\tr\hat R\quad\textrm{  for all $t\in [0,T]$,}\\
	\|v-\hat v\|_{H^{-1}}&\leq\eta,\\
	\|v\otimes v-\hat v\otimes \hat v-\hat R\|_{H^{-1}}&\leq\eta.
	\end{align*}
\end{proposition}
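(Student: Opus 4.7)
The plan is to run a convex integration scheme of BDSV type, starting from $(\hat v,\hat p,\hat R)$, to produce a sequence $(v_q,p_q,R_q)$ of smooth strong subsolutions with parameters $\lambda_q=a^{b^q}$ and $\delta_q=\lambda_q^{-2\hat\beta}$, satisfying $\|R_q\|_0\lesssim\delta_{q+1}$, $\|v_{q+1}-v_q\|_0\lesssim \delta_{q+1}^{1/2}$, and $\|v_{q+1}-v_q\|_1\lesssim \delta_{q+1}^{1/2}\lambda_{q+1}$. With $b>1$ close to $1$ and $a$ sufficiently large (depending on $\eta$), these estimates give convergence of $v_q$ to a $C^{\hat\beta}$-weak solution $v$ and control on the $H^{-1}$-distance to $\hat v$. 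Every perturbation will be arranged to vanish at $t=0$, so that $v(\cdot,0)=\hat v(\cdot,0)$.

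The first adaptation needed, compared to BDSV, is a time-localized setup that accommodates the singular behavior of the adapted subsolution at $t=0$. Using \eqref{eq:adaptdtr} I would choose times $t_q\downarrow 0$ with $\rho(t_q)\sim \delta_q$ and $t_q\sim \delta_q^{(1+\nu)/2}$, and support the perturbations at stage $q$ inside $[t_q,T]$ away from a neighborhood of $t_q$. On this interval \eqref{eq:adaptv} gives $\|v_q\|_{1+\alpha}\lesssim \delta_q^{-(1+\nu)/2}$, and the inequality $\nu<(1-3\hat\beta)/(2\hat\beta)$ is exactly what is needed so that this bound stays below the BDSV threshold $\lambda_q^{1-\hat\beta}$, making the choice of spatial mollification scale $\ell_q$ admissible. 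Because $\|v_q\|_{1+\alpha}$ diverges as $t\to 0^+$, $\ell_q$ must be taken time-dependent, decreasing as $t\to t_q$; this is the first of the two new difficulties flagged in the introduction.

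Second, on the admissible time interval I apply the localized gluing step of Section \ref{sect:gluing} to replace the mollified velocity by a concatenation of exact Euler flows on short subintervals, so that the new Reynolds stress is supported in narrow temporal strips. On these strips I then apply the localized perturbation step of Section \ref{sect:pert}, adding a Mikado-type fluctuation $w_{q+1}$ built from Lemma \ref{lemma:mikado} with amplitude $\rho_q^{1/2}$ and frequency $\lambda_{q+1}$; the identity \eqref{eq:Ck} together with stationary phase (Proposition \ref{prop:stat}) cancels the stress to leading order and produces $(v_{q+1},p_{q+1},R_{q+1})$ with a much smaller stress. Passing to the limit in the iteration yields a $C^{\hat\beta}$-weak solution $v$; the identity \eqref{eq:mik3} together with $R_q\to 0$ gives $\int|v|^2=\int|\hat v|^2+\tr\hat R$ as in BDSV.

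The hardest part is the $H^{-1}$-approximation. Because the gluing step replaces $v_q$ by exact Euler flows, the increment $v_{q+1}-v_q$ is no longer purely oscillatory at frequency $\lambda_{q+1}$, and the direct stationary-phase estimate via Proposition \ref{prop:stat} does not immediately give $\|v_{q+1}-v_q\|_{H^{-1}}\lesssim \lambda_{q+1}^{-1}$. To overcome this I would introduce during the perturbation an extra oscillatory corrector whose role is to absorb the non-oscillatory part coming from the gluing into a term with small, explicitly controlled $H^{-1}$-norm. Summing the resulting geometric series in $H^{-1}$ then gives $\|v-\hat v\|_{H^{-1}}\leq \eta$; combining with \eqref{eq:Ck} for the quadratic term gives the analogous bound for $v\otimes v-\hat v\otimes \hat v-\hat R$. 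The smallness assumption $\hat\rho\le \delta$ enters here: $\delta=\delta(\eta)$ is chosen so that the starting Reynolds stress $\|\hat R\|_0\le 3\delta$ fits within the BDSV parameter constraints producing a tolerance $\eta$ in $H^{-1}$ after the geometric summation.
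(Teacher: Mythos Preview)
Your outline correctly identifies the main ingredients (time-localization, the localized gluing of Section~\ref{sect:gluing}, the localized perturbation of Section~\ref{sect:pert}) and the reason the condition $\nu<(1-3\hat\beta)/(2\hat\beta)$ enters. The paper's time-localization is via level sets $J_q=\{\rho_q>\tfrac32\delta_{q+2}\}$ and $K_q=\{\rho_q\geq 2\delta_{q+2}\}$ rather than fixed times $t_q$, but this is essentially equivalent to what you describe. There is a minor slip: with $\rho\sim\delta_q$, \eqref{eq:adaptv} gives $\|v\|_{1+\alpha}\lesssim\delta_q^{-(1+\nu)}$, not $\delta_q^{-(1+\nu)/2}$; the arithmetic you then carry out with the threshold $\lambda_q^{1-\hat\beta}$ is in fact consistent with the correct exponent.

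The genuine gap is in your treatment of the $H^{-1}$ approximation. You propose adding an ``extra oscillatory corrector'' during each perturbation step to absorb the non-oscillatory part produced by the gluing. This is not a workable mechanism: the increment $\bar v_q-v_q$ from the gluing is a low-frequency object (a difference of exact Euler flows with nearby mollified data), and there is no high-frequency term one can add that cancels it in $H^{-1}$ while preserving the $C^0$ and $C^1$ estimates needed for the scheme. The paper takes an entirely different route. \emph{Before} starting the gluing-and-perturbation iteration, it applies Corollary~\ref{cor:adapt} (based on Proposition~\ref{prop:str}, a single Mikado perturbation \emph{without} gluing, which therefore does enjoy good $H^{-1}$ bounds) to $(\hat v,\hat p,\hat R)$, producing a new adapted subsolution $(v_0,p_0,R_0)$ with $\rho_0$ already of order~$\eta$, $v_0(\cdot,0)=\hat v(\cdot,0)$, and $\|v_0-\hat v\|_{H^{-1}}\leq\eta/2$. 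Only then does the main iteration run; because $\rho_0$ is already small, the $C^0$ increments $\|v_{q+1}-v_q\|_0\lesssim\delta_{q+1}^{1/2}$ sum to something of order $\delta_1^{1/2}$, and by choosing $a$ large this is below $\eta/2$, trivially controlling $H^{-1}$. This pre-processing is precisely the ``additional step'' alluded to in the introduction. Note also that the smallness hypothesis $\hat\rho\leq\delta$ in the statement is the $\tilde\delta$ required by Corollary~\ref{cor:adapt}; it depends only on $\gamma$, not on $\eta$, contrary to what you write.
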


As a consequence, we get the following criterion for wild initial data:

\begin{corollary}\label{cor:wild}
	Let $w\in C^\beta$ be a divergence-free vectorfield for some $0<\beta<1/3$. If there exists a $C^{\hat\beta}$-adapted subsolution $(\hat v, \hat p, \hat R)$ for some $\beta<\hat{\beta}<\frac13$ with parameters $\gamma$, $\nu$ and satisfying  $\hat{\rho}\leq\delta$ as in Proposition \ref{prop:adaptsol} such that $\hat v(\cdot,0)=w(\cdot)$ and 
	\[
	\int_{\T}|\hat v(x,t)|^2+\tr\hat R(x,t)\,dx\leq\int_{\T}|w(x)|^2\,dx \quad\forall\,t>0,
	\]
	then $w$ is a wild initial datum in $C^\beta$. 
	\end{corollary}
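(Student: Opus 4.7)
The plan is to obtain the required infinitely many weak solutions by invoking Proposition \ref{prop:adaptsol} with a sequence of parameters $\eta\to 0$, and then to upgrade this to genuine non-uniqueness via a short pigeonhole argument built on the $H^{-1}$-approximation estimates.

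First I would reconcile the notational conventions: in Proposition \ref{prop:adaptsol} the symbol ``$\beta$'' denotes the larger H\"older exponent on the subsolution side and ``$\hat\beta$'' the smaller one on the solution side, so I would identify the proposition's $\beta$ with our $\hat\beta$ and its $\hat\beta$ with our $\beta$. The constraint $\frac{1-3\hat\beta}{2\hat\beta}<\nu<\frac{1-3\beta}{2\beta}$ is then exactly the compatibility condition inherited from the hypothesis, and the smallness $\hat\rho\leq\delta$ is likewise built in. For each $\eta>0$ the proposition then furnishes a $C^\beta$-weak solution $v_\eta$ of \eqref{eq:EE} with
\[
v_\eta(\cdot,0)=\hat v(\cdot,0)=w,\qquad \int_\T|v_\eta|^2\,dx=\int_\T\bigl(|\hat v|^2+\tr\hat R\bigr)\,dx,
\]
together with $\|v_\eta-\hat v\|_{H^{-1}}\leq\eta$ and $\|v_\eta\otimes v_\eta-\hat v\otimes\hat v-\hat R\|_{H^{-1}}\leq\eta$.

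Next I would verify the two clauses of Definition \ref{def:wild} for each such $v_\eta$. The H\"older bound \eqref{eq:holder} is immediate from the $C^\beta$-regularity supplied by the proposition, and the admissibility \eqref{eq:admineq} follows at once by combining the energy identity for $v_\eta$ with the standing hypothesis $\int_\T(|\hat v|^2+\tr\hat R)\,dx\leq\int_\T|w|^2\,dx$. Every $v_\eta$ is thus an admissible $C^\beta$-weak solution with initial datum $w$.

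Finally I would show that $\{v_\eta\}_{\eta>0}$ contains infinitely many distinct solutions; this is where the $H^{-1}$-estimates play the decisive role and, in effect, constitutes the only non-routine step. Suppose for contradiction that only finitely many weak solutions $u_1,\dots,u_N$ meet \eqref{eq:admineq}, \eqref{eq:holder} and the initial condition. Then for $\eta=1/n$ each $v_{1/n}$ lies in $\{u_1,\dots,u_N\}$, so by pigeonhole some $u_j$ satisfies $v_{1/n_k}\equiv u_j$ along a subsequence $n_k\to\infty$. Passing to the limit in the two $H^{-1}$-estimates yields $u_j=\hat v$ and $u_j\otimes u_j=\hat v\otimes\hat v+\hat R$ as distributions, hence $\hat R\equiv 0$; this contradicts $\tr\hat R(t)=3\hat\rho(t)>0$ for $t>0$, which is built into Definition \ref{d:adapted}. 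The substantive content of the corollary is therefore fully absorbed into Proposition \ref{prop:adaptsol}; once that machinery is in hand, the present statement reduces to the short bookkeeping and pigeonhole argument just outlined.
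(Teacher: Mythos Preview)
Your proposal is correct and follows essentially the same approach as the paper: apply Proposition \ref{prop:adaptsol} with $\eta\to 0$ to produce a sequence of admissible $C^\beta$-weak solutions $v_k$ with initial datum $w$ that converge to $\hat v$ in $H^{-1}$, and then observe that since $\hat v$ is not itself a weak solution (because $\hat\rho(t)>0$ for $t>0$), the sequence must contain infinitely many distinct solutions. The paper states this in one sentence (deferring to \cite{DanSz}); your pigeonhole argument and the use of the second $H^{-1}$-estimate to conclude $\hat R\equiv 0$ simply spell out the same reasoning in more detail.
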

Indeed, as observed in \cite{DanSz}, given a $C^{\hat\beta}$-adapted subsolution $(\hat v,\hat p,\hat R)$ with such parameters, Proposition \ref{prop:adaptsol} provides  a sequence of $C^\beta$ admissible weak solutions $(v_k,p_k)$ with $v_k(\cdot,0)=\hat v(\cdot,0)$,
\[
\int_{\T}|v_k(x,t)|^2\,dx=\int_{\T}|\hat v(x,t)|^2+\tr\hat R(x,t)\,dx\quad \forall\,t>0 
\]
and such that $v_k\to\hat v$ in $H^{-1}(\T)$ uniformly in time.

\begin{proof}[Proof of Theorem \ref{thm:main}]

The proof of Theorem \ref{thm:main} follows from Proposition \ref{prop:stradapt} and Corollary \ref{cor:wild} as in Section 4 of \cite{DanSz}.

\end{proof}

\section{From strict to strong subsolutions}
\label{sect:strstr}

We first state a variant of \cite{DanSz}[Proposition 3.1].

\begin{proposition}\label{prop:str}
Let $(v,p,R)$ be a smooth solution of \eqref{eq:ER} and $S$ be a smooth $\mathcal{S}^{3\times 3}$-valued matrix-field on $\T\times[0,T]$, such that one of the following two conditions is satisfied:
\begin{enumerate}
\item[(i)] $S(x,t)$ is positive definite for all $(x,t)$;
\item[(ii)] $S(x,t)=\sigma(t)\Id+\mathring{S}(x,t)$, with $|\mathring{S}|\leq \tfrac12\sigma$ for all $(x,t)$. 	
\end{enumerate}
Fix $\bar\alpha\in(0,1)$. 
Then for any $\lambda>1$ there exists a smooth solution $(\tilde v,\tilde p,\tilde R)$ with
\begin{equation}\label{eq:strong-int}
\begin{split}
  (\tilde v,\tilde p,\tilde R)&=(v,p,R)\,\textrm{ for }t\notin \supp\,\sigma\\
	\int |\tilde v|^2+\tr\tilde R=&\int |v|^2+\tr R\quad\textrm{ for all }t,
\end{split}
\end{equation}
and the following estimates hold:
\begin{equation}\label{eq:strong-est}
\begin{split}	
\|\tilde v-v\|_{H^{-1}}&\leq \frac{C}{\lambda},\\
\|\tilde v\|_{k}&\leq C\lambda^k\quad k=1,2,\\	
\|R-\tilde R-S\|_0&\leq \frac{C}{\lambda^{1-\bar\alpha}},\\
\|\tilde v\otimes\tilde v-v\otimes v+\tilde{R}-R\|_{H^{-1}}&\leq \frac{C}{\lambda^{1-\bar\alpha}}.
\end{split}
\end{equation}
Moreover, $\tr(R-\tilde R-S)$ is a function of $t$ only and satisfies
\begin{equation}\label{eq:strong-time}
\left|\frac{d}{dt}\tr(R-\tilde R-S)\right|\leq C\lambda^{\bar\alpha}. 	
\end{equation}
The constant $C\geq 1$ above depends on $(v,p,R)$, $S$ and $\bar\alpha$, but not on $\lambda$.
\end{proposition}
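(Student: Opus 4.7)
The plan is to follow Proposition 3.1 of \cite{DanSz}: add a high-frequency Mikado perturbation $w$ to $v$ at frequency $\lambda$, chosen so that the fast-scale average of $w\otimes w$ equals $S$, and absorb the remaining error into $\tilde R$ via the inverse-divergence operator $\mathcal R$ of Section \ref{ss:R}. The amplitude prefactor $\sqrt{\sigma(t)}$ (in case (ii)) ensures $w\equiv 0$ outside $\supp\sigma$, giving the first line of \eqref{eq:strong-int} automatically.

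\textbf{Perturbation.} In case (ii), $S/\sigma=\Id+\mathring S/\sigma\in\bar B_{1/2}(\Id)\subset\subset\mathcal S^{3\times 3}_+$ on $\supp\sigma$, so applying Lemma \ref{lemma:mikado} with $\mathcal N=\bar B_{1/2}(\Id)$ and taking $\lambda\in\N$ I set
\[
w(x,t):=\sqrt{\sigma(t)}\,W\bigl(S/\sigma,\,\lambda\Phi(x,t)\bigr)=\sum_{k\neq 0}b_k(x,t)\,A_k\,e^{i\lambda k\cdot\Phi(x,t)},
\]
with $b_k:=\sqrt{\sigma}\,a_k(S/\sigma)$, extended by zero off $\supp\sigma$, where $\Phi(x,t)$ is a Lagrangian phase transported by $v$, i.e.\ $\partial_t\Phi+v\cdot\nabla\Phi\equiv 0$ with $D\Phi\approx\Id$ initially. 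Case (i) is handled identically with $\sigma\equiv 1$ and $\mathcal N$ a compact neighbourhood of the range of $S$. Then $w$ is divergence-free (since $A_k\cdot k=0$), spatially zero-mean, and by \eqref{eq:Ck},
\[
w\otimes w=S+\Omega,\qquad \Omega(x,t):=\sigma(t)\sum_{k\neq 0}C_k(S/\sigma)\,e^{i\lambda k\cdot\Phi}.
\]

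\textbf{Error equation, $\tilde R$, estimates.} Setting $\tilde v:=v+w$ and subtracting \eqref{eq:ER} from the Euler--Reynolds equation for $\tilde v$, the error reduces to
\[
\partial_t w+v\cdot\nabla w+w\cdot\nabla v+\div\Omega+\nabla(\tilde p-p)=-\div(\tilde R-R+S).
\]
The key point is that the Lagrangian phase kills the would-be $O(\lambda)$ part of $\partial_t w+v\cdot\nabla w$, so that each term on the left is a sum of oscillatory modes $F_k(x,t)e^{i\lambda k\cdot\Phi}$ with $\|F_k\|_0=O(1)$ and super-polynomial decay in $|k|$ (Lemma \ref{lemma:mikado}). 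I would then define
\[
E:=\mathcal R\bigl(\partial_t w+v\cdot\nabla w+w\cdot\nabla v+\div\Omega\bigr),\qquad \tilde R:=R-S-E+\psi(t)\,\Id,
\]
absorbing gradient and spatial-mean parts into $\tilde p$. Proposition \ref{prop:stat} applied mode-by-mode yields $\|E\|_{C^{\bar\alpha}}\leq C\lambda^{\bar\alpha-1}$; combined with the direct bounds $\|w\|_k\leq C\lambda^k$, $\|w\|_{H^{-1}}\leq C/\lambda$ and $\|\Omega\|_{H^{-1}}\leq C/\lambda$, this produces all four estimates in \eqref{eq:strong-est}. Since $\mathcal R$ outputs symmetric trace-free tensors, $\tr(R-\tilde R-S)=-3\psi(t)$ depends only on $t$.

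\textbf{Energy fix and main obstacle.} Finally I would fix $\psi$ to enforce the energy identity in \eqref{eq:strong-int}:
\[
3|\T|\,\psi(t):=\int_\T\bigl(2v\cdot w+|w|^2-\tr S\bigr)\,dx.
\]
By the stationary-phase estimate \eqref{eq:aphiint}, $|\psi|+|\psi'|=O(\lambda^{-N})$ for every $N$, trivially yielding \eqref{eq:strong-time}. The hard part is the handling of the transport term $v\cdot\nabla w$: with $\Phi\equiv\mathrm{id}$ its amplitude is $O(\lambda)$ and the naive $\mathcal R$-inversion would only give $O(\lambda^{\bar\alpha})$, losing the crucial $\lambda^{-1}$ gain demanded by the proposition; the Lagrangian phase cancels this transport at leading order, which is precisely the content of Isett's gluing technique as used in \cite{BDSV}. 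A secondary technical care is the smoothness of $\sqrt{\sigma}\,a_k(S/\sigma)$ near $\partial\supp\sigma$, which needs standard cutoffs under the assumption that $\sigma$ vanishes to sufficient order at its boundary.
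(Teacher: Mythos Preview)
Your overall plan---Mikado building blocks with a Lagrangian phase $\Phi$, inverse-divergence operator $\mathcal R$ for the error, and a scalar $\psi(t)$ to fix the energy---is exactly the route the paper takes, following \cite{DanSz}. But there is a genuine gap.

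\textbf{The perturbation $w$ you wrote is not divergence-free.} You claim this follows from $A_k\cdot k=0$, but with the Lagrangian phase $\lambda k\cdot\Phi(x,t)$ one has $\nabla_x e^{i\lambda k\cdot\Phi}=i\lambda(\nabla\Phi)^Tk\,e^{i\lambda k\cdot\Phi}$, and in general $A_k\cdot(\nabla\Phi)^Tk\neq 0$. Even if $\Phi=x$, the amplitude $b_k(x,t)=\sqrt{\sigma}\,a_k(S/\sigma)$ depends on $x$ through $S$, so $\div w=\sum_k(\nabla b_k\cdot A_k)e^{i\lambda k\cdot x}\neq 0$. Without $\div\tilde v=0$ the triple $(\tilde v,\tilde p,\tilde R)$ does not solve \eqref{eq:ER} and the argument collapses. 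The paper (and \cite{DanSz}) fixes this in the standard way: one writes $w_o=(\nabla\Phi)^{-1}W(\bar R,\lambda\Phi)$ with $\bar R=\nabla\Phi\,(S/\sigma)\,(\nabla\Phi)^T$ (so that the slow part of $w_o\otimes w_o$ is still $S$), and then adds a curl correction $w_c=O(1/\lambda)$ so that $w=w_o+w_c=\curl(\cdot)$ is exactly divergence-free. Once $w_c$ is present it contributes to the error (the term $w_c\otimes\tilde v+w_o\otimes w_c$ in the paper's $\mathring{\mathcal E}^{(1)}$) and to the energy defect, so your downstream bookkeeping has to change.

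\textbf{A terminological confusion.} The cancellation $(\partial_t+v\cdot\nabla)e^{i\lambda k\cdot\Phi}=0$ that you invoke is the classical Lagrangian-phase trick used already in \cite{DS13,DS14,BDIS,DanSz}; it has nothing to do with Isett's gluing technique from \cite{Ise,BDSV}, which is an entirely different device (patching exact Euler solutions on short time intervals) and does not appear in this proposition at all.

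\textbf{On the time estimate \eqref{eq:strong-time}.} Your stationary-phase argument for $|\psi'|=O(\lambda^{-N})$ is valid for the purely oscillatory pieces, but once the correction $w_c$ is included, the cross terms $\int w_o\cdot w_c$ have non-oscillatory resonant contributions of size $O(1/\lambda)$ whose time derivatives are only $O(1/\lambda)$, not $O(\lambda^{-N})$. This is still more than enough for \eqref{eq:strong-time}. The paper instead obtains \eqref{eq:strong-time} by a one-line energy-identity computation: from \eqref{eq:ER} for $\tilde v$, $\bigl|\tfrac{d}{dt}\fint|\tilde v|^2\bigr|\leq \fint|\nabla\tilde v||\mathring{\tilde R}|\lesssim \lambda\cdot\lambda^{\bar\alpha-1}=\lambda^{\bar\alpha}$, which avoids any term-by-term analysis.
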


\begin{proof}
The proof is a minor modification of the proof given in \cite{DanSz}[Section 5]. We recall the main steps. 
Define the inverse flow of $v$, $\Phi:\T\times[0,T]\to\T$, as the solution of
\begin{equation*}
\left\{
\begin{aligned}
\partial_t\Phi+v\cdot\nabla\Phi&=0\\
\Phi(x,0)&=x,\quad\forall\,x\in\T
\end{aligned}\right.
\end{equation*}  
and set 
\[
\bar R(x,t)=\begin{cases}D\Phi(x,t)S(x,t)D\Phi^T(x,t)&\textrm{ if (i) holds;}\\ D\Phi(x,t)\frac{\mathring{S}(x,t)}{\sigma(t)}D\Phi^T(x,t)&\textrm{ if (ii) holds.}\end{cases}
\] 
Observe that in case (i) $\bar R$ is defined on $\T\times [0,T]$ and, being continuous and defined on a compact set, takes values in a compact subset $\mathcal N_0$ of $S^{3\times3}_+$. In case (ii) $\bar R$ is defined only on $\T\times\supp\,\sigma$, and takes values in $\mathcal N_0:=B_{1/2}(\Id)$. 
 
By Lemma \ref{lemma:mikado}, there exists a smooth vectorfield 
$W:\mathcal N_0\times\T\to\R^3$ with properties \eqref{eq:mik1}-\eqref{eq:mik3}.
We define 
\begin{align*}
w_o(x,t)&=\begin{cases}D\Phi^{-1}W(\bar R,\lambda\Phi(x,t))&\textrm{ if (i) holds;}\\
\sigma^{1/2}D\Phi^{-1}W(\bar R,\lambda\Phi(x,t))&\textrm{ if (ii) holds;}\end{cases}\\
w_c(x,t)&=\begin{cases}-\frac{1}{\lambda}\curl (D\Phi^TU(\bar R,\lambda\Phi(x,t)))-w_o&\textrm{ if (i) holds;}\\
-\frac{1}{\lambda}\curl (\sigma^{1/2}D\Phi^TU(\bar R,\lambda\Phi(x,t)))-w_o&\textrm{ if (ii) holds.}\end{cases}
\end{align*}
Here $U=U(S,\xi)$ is such that $\curl_\xi U=W$. We then define
\begin{equation*}
\tilde v=v+w_o+w_c,\quad\tilde p=p+\bar p,\quad \tilde R=R-S-\mathring{\mathcal{E}}^{(1)}-\mathcal{E}^{(2)},
\end{equation*}
where $\bar p=-\tfrac{1}{3}(w_c\cdot \tilde v+w_o\cdot w_c)$, 
\begin{align*}
\mathring{\mathcal{E}}^{(1)}&=\mathcal R(F)+(w_c\otimes\tilde v+w_o\otimes w_c+\bar p\Id),\\
F&=\div(w_o\otimes w_o-S)+(\partial_t+v\cdot\nabla)w_o+(w_o+w_c)\cdot\nabla v+\partial_t w_c,\\
\mathcal{E}^{(2)}&=\frac13\Big(\fint_{\T}|\tilde v|^2-|v|^2-\tr S\Big)\Id
\end{align*}
and $\mathcal R$ is the operator defined in \eqref{eq:Rdef}. 

By construction \eqref{eq:strong-int} holds, $\tr\mathring{\mathcal E}^{(1)}=0$, $\mathcal{E}^{(2)}$ is a function of $t$ only, and 
\begin{align*}
\div\mathring{\mathcal E}^{(1)}&=\div(\tilde v\otimes \tilde v-v\otimes v-S)+\bar p\Id+\partial_t(\tilde v-v)\\
&=\partial_t\tilde v+\div(\tilde v\otimes \tilde v-S+R)+\tilde p\,\Id.
\end{align*}
Therefore $(\tilde v,\tilde p,\tilde R)$ solves \eqref{eq:ER} as claimed. The estimates in the proof of \cite{DanSz}[Proposition 3.1] apply to $\mathring{\mathcal{E}}^{(1)}$ and yield then \eqref{eq:strong-est}. 

Finally, note that $\tr(R-\tilde R-S)=\tr\mathcal{E}^{(2)}=\fint|\tilde v|^2-|v|^2-\tr S$. In order to estimate $\int|\tilde{v}|^2\,dx$, note that the energy identity for $\tilde v$, deduced from \eqref{eq:ER}, reads
$$
\partial_t\tfrac12|\tilde v|^2+\div(\tilde{v}(|\tilde v|^2/2+\tilde{p})=-\tilde v\cdot\div \tilde R,
$$ 
from which we deduce, after integrating in $x$ and using \eqref{eq:strong-est}
$$
\left|\frac{d}{dt}\fint\tfrac12|\tilde v|^2\,dx\right|\leq \fint|\nabla\tilde v||\mathring{\tilde R}|\,dx\leq C\lambda^{\bar\alpha}.
$$
This verifies \eqref{eq:strong-time} and thus concludes the proof.
\end{proof}

We will use this proposition in two situations, as described in the following corollaries.

\begin{corollary}\label{cor:strstr}
	Let $(v,p,R)$ be a smooth strict subsolution on $[0,T]$ and let $\tilde\eps>0$. There exists $\tilde \delta,\,\gamma>0$ such that the following holds.
	 
For any $0<\delta<\tilde\delta$ 
	there exists a smooth strong subsolution $(\tilde v, \tilde p, \tilde R)$ with $\tilde R(x,t)=\tilde\rho(t)\Id+\mathring{\tilde{R}}(x,t)$ such that, for all $t\in[0,T]$
	\begin{align}
	\tfrac{3}{4}\delta\leq &\tilde\rho\leq \tfrac{5}{4}\delta,\label{eq:str1}\\
	|\mathring{\tilde{R}}|&\leq \tilde{\rho}^{1+\gamma}\,,\label{eq:str2}\\
		\|\tilde v-v\|_{H^{-1}}+\|v\otimes v+R-\tilde v\otimes \tilde v&-\tilde R\|_{H^{-1}}\leq C\delta^{1+\gamma}\,,\label{eq:str4}\\
		\int_{\T}|v|^2+\tr R\,dx&=\int_{\T}|\tilde v|^2+\tr\tilde R\,dx,\label{eq:str5}\\
		\|\tilde v\|_j&\leq C\delta^{-j(1+\tilde\eps)}\quad j=1,2,\label{eq:str3}\\
		\left|\partial_t\tilde\rho\right|&\leq C\delta^{-\tilde\eps}\,,\label{eq:str6}
	\end{align}
	where the constant $C$ depends on $(v,p,R)$ and $\tilde\eps$.
\end{corollary}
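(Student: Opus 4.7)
The plan is to apply Proposition \ref{prop:str} in case (i) with the explicit choice $S := R - \delta\Id$. Since $(v,p,R)$ is a smooth strict subsolution on the compact set $\T\times[0,T]$, by compactness there exists a constant $c_0>0$ such that $R \geq c_0\Id$ pointwise; hence $S$ will be positive definite once $\delta < c_0/2$, which accounts for the first smallness constraint on $\tilde\delta$. With this choice the output $\tilde R = R - S - \mathring{\mathcal E}^{(1)} - \mathcal E^{(2)} = \delta\Id - \mathring{\mathcal E}^{(1)} - \mathcal E^{(2)}$ automatically has trace $\tr\tilde R = 3\delta - \tr\mathcal E^{(2)}$ depending on $t$ only, while the traceless part is $\mathring{\tilde R} = -\mathring{\mathcal E}^{(1)}$.

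Given $\tilde\eps > 0$, I would then pick, depending only on $\tilde\eps$, an exponent $\bar\alpha \in (0, \tilde\eps/(1+\tilde\eps))$ and $\gamma \in (0, (1+\tilde\eps)(1-\bar\alpha) - 1)$, and set $\lambda := \delta^{-(1+\tilde\eps)}$. The $C^k$-estimate \eqref{eq:str3} is then immediate from the bound $\|\tilde v\|_k \leq C\lambda^k$ in Proposition \ref{prop:str}. The $H^{-1}$-estimates \eqref{eq:str4} follow from \eqref{eq:strong-est} together with $\delta^{(1+\tilde\eps)(1-\bar\alpha)} \leq \delta^{1+\gamma}$ for $\delta \leq 1$. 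For \eqref{eq:str2}, orthogonality of trace and traceless parts gives $\|\mathring{\tilde R}\|_0 = \|\mathring{\mathcal E}^{(1)}\|_0 \leq \|R - \tilde R - S\|_0 \leq C\lambda^{-(1-\bar\alpha)} \leq C\delta^{1+\gamma}$; combined with the lower bound $\tilde\rho \geq \tfrac34\delta$ from \eqref{eq:str1}, this yields \eqref{eq:str2} after possibly shrinking $\tilde\delta$ to absorb the constant into the exponent. The time-derivative \eqref{eq:str6} comes from \eqref{eq:strong-time}: indeed $|\partial_t \tilde\rho| = \tfrac13 |\partial_t \tr\mathcal E^{(2)}| \leq C\lambda^{\bar\alpha} \leq C\delta^{-\tilde\eps}$ by the choice of $\bar\alpha$.

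The main obstacle will be verifying the pointwise range \eqref{eq:str1}, namely $\tilde\rho \in [\tfrac34\delta, \tfrac54\delta]$. Since $\tr\tilde R$ is a function of $t$ only, the energy identity \eqref{eq:strong-int} gives $3|\T|\,\tilde\rho(t) = \int_{\T}(|v|^2 - |\tilde v|^2)\,dx + \int_{\T} \tr R\,dx$. Using the explicit Mikado decomposition $\tilde v = v + w_o + w_c$ from the proof of Proposition \ref{prop:str} and the identity $\fint w_o \otimes w_o = S$ up to high-frequency oscillations, one computes $\int_{\T}|\tilde v|^2 = \int_{\T}|v|^2 + \int_{\T} \tr S\,dx + O(\lambda^{-1})$, which together with $\tr R - \tr S = 3\delta$ yields $\tilde\rho = \delta + O(\lambda^{-1}) = \delta + O(\delta^{1+\tilde\eps})$ uniformly in $t$. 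Shrinking $\tilde\delta$ a final time so that this error is bounded by $\delta/4$ gives \eqref{eq:str1}, and the energy identity \eqref{eq:str5} is exactly \eqref{eq:strong-int}.
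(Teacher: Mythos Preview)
Your proof is correct and follows essentially the same approach as the paper: apply Proposition~\ref{prop:str} in case~(i) with $S=R-\delta\Id$, then tune $\lambda$ and $\bar\alpha,\gamma$. Your parameterization $\lambda=\delta^{-(1+\tilde\eps)}$ with $\bar\alpha<\tilde\eps/(1+\tilde\eps)$ and $\gamma<(1+\tilde\eps)(1-\bar\alpha)-1$ is equivalent to the paper's choice $\lambda=(4C)^{1/(1-\bar\alpha)}\delta^{-(1+\gamma)/(1-\bar\alpha)}$ with $\tfrac{1+\gamma}{1-\bar\alpha}<1+\tilde\eps$; both lead to the same inequalities.

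The one place where you take an unnecessary detour is \eqref{eq:str1}. You call this ``the main obstacle'' and go back into the proof of Proposition~\ref{prop:str} to expand $\int|\tilde v|^2$ via the Mikado ansatz $\tilde v=v+w_o+w_c$ and stationary phase. This works, but it is not needed: the very estimate $\|R-\tilde R-S\|_0\leq C\lambda^{-(1-\bar\alpha)}$ from \eqref{eq:strong-est} that you already invoked for the traceless part gives it directly, since $R-S=\delta\Id$ means $\|\tilde R-\delta\Id\|_0\leq C\delta^{(1+\tilde\eps)(1-\bar\alpha)}$, and taking the trace yields $|\tilde\rho-\delta|\leq C\delta^{1+\gamma}\leq\tfrac14\delta$ for $\delta$ small. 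This is exactly how the paper handles it, and it avoids reopening the black box of Proposition~\ref{prop:str}.
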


\begin{proof}[Proof of Corollary \ref{cor:strstr}]
Let
\[
\tilde\delta=\tfrac12\inf\{R(x,t)\xi\cdot\xi:\,|\xi|=1,\,x\in\T,t\in[0,T]\}.
\]
Since $R$ is a smooth positive definite tensor on a compact set, $\tilde\delta>0$. Then $S:=R-\delta\Id$ is positive definite for any $\delta<\tilde\delta$. We may in addition assume without loss of generality that $\delta\leq 1$. We apply Proposition \ref{prop:str} with $(v,p,R)$, $S$, and $\bar\alpha\in(0,1)$ to be chosen below. Note that condition (i) is satisfied. The proposition yields a smooth solution $(\tilde v,\tilde p,\tilde R)$ of \eqref{eq:ER} with properties \eqref{eq:strong-int}-\eqref{eq:strong-time}. Observe that $\tilde R-R+S=\tilde R-\delta\,\Id$, so that $\tilde\rho=\tfrac{1}{3}\tr(\tilde R-R+S)+\delta$ is a function of $t$ only.   

For $\gamma\in(0,1)$ (to be specified later) set
$$
\lambda=(4C)^{\frac{1}{1-\bar\alpha}}\delta^{-\frac{1+\gamma}{1-\bar\alpha}}
$$
with the constant $C$ from \eqref{eq:strong-est}, so that we obtain \eqref{eq:str4} and 
$$
\|\tilde R-R+S\|_0\leq \tfrac{1}{4}\delta^{1+\gamma}\,.
$$
It follows that $|\tilde\rho-\delta|\leq \tfrac{1}{4}\delta$, verifying \eqref{eq:str1}. From this estimate we can in turn deduce \eqref{eq:str2}.

So far $\bar{\alpha},\gamma$ was arbitrary - it remains to choose these parameters so that also \eqref{eq:str3} and \eqref{eq:str6} are valid. Indeed, by choosing $0<\bar\alpha,\gamma\ll 1$ sufficiently small, so that $\tfrac{1+\gamma}{1-\bar\alpha}<1+\tilde\eps$ and $\bar\alpha\tfrac{1+\gamma}{1-\bar\alpha}<\tilde\eps$, we easily deduce \eqref{eq:str3} and \eqref{eq:str6}. 

\end{proof}

\begin{corollary}\label{cor:adapt}
Given $0<\beta<1/3$ and $\gamma,\nu>0$ there exists $\tilde\delta>0$ such that the following holds. 

Let $(v,p,R)$ be a $C^{\beta}$-adapted subsolution with parameters $\gamma,\nu>0$ and assume $\rho\leq\tilde{\delta}$. Suppose $\gamma<\nu$ and let $\tilde\gamma<\gamma$. For any $\eta>0$ there exists another $C^{\beta}$-adapted subsolution $(\tilde v,\tilde p,\tilde R)$ with parameters $\tilde\gamma,\nu>0$ (with possibly different constants $C$ and $\alpha$ in \eqref{eq:adaptv}-\eqref{eq:adaptdtr} which may depend on $(v,p,R)$ but not on $\eta$) such that, with $\tilde R=\tilde\rho\,\Id+\mathring{\tilde R}$, 
$$
\tilde\rho\leq \eta\quad \textrm{ and }\quad\tilde v=v\textrm{ for }t=0.
$$	
Furthermore
\begin{equation}\label{eq:approxadapt}
\begin{split}
\int_{\T}|\tilde v|^2+\tr \tilde R=\int_{\T}|v|^2+\tr R&\quad\textrm{ for all }t,\\
\|\tilde v-v\|_{H^{-1}}&\leq \eta,\\
\|\tilde v\otimes \tilde{v}+\tilde R-v\otimes v-R\|_{H^{-1}}&\leq \eta.
\end{split}
\end{equation}
\end{corollary}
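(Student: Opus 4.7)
The plan is to apply Proposition~\ref{prop:str} in case (ii) once, with a tensor $S(x,t)=\sigma(t)\Id+\mathring S(x,t)$ supported in $\{\rho\geq\eta/8\}$, so as to simultaneously shrink the scalar part $\rho$ below $\eta$ and cancel almost all of the traceless part $\mathring R$. Since $\rho(0)=0$, $\sigma$ will automatically vanish in a neighborhood of $t=0$, giving $\tilde v(\cdot,0)=v(\cdot,0)$ from \eqref{eq:strong-int}. The $H^{-1}$ approximations and the trace identity in \eqref{eq:approxadapt} likewise follow from \eqref{eq:strong-int}--\eqref{eq:strong-est}.

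Concretely, fix smooth non-decreasing $h,\psi\colon[0,\infty)\to[0,\infty)$ with $h(s)=s$ for $s\leq\eta/8$, $h(s)=\eta/4$ for $s\geq\eta/2$, $\psi\equiv 0$ on $[0,\eta/4]$ and $\psi\equiv 1$ on $[\eta/2,\infty)$, and set
\[
\sigma(t):=\rho(t)-h(\rho(t)),\qquad \mathring S(x,t):=\psi(\rho(t))\,\mathring R(x,t).
\]
For $\tilde\delta$ small enough in terms of $\gamma$ one verifies $|\mathring S|\leq\psi(\rho)\rho^{1+\gamma}\leq\sigma/2$ on $\supp\sigma$, so case (ii) of Proposition~\ref{prop:str} applies. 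Running it with parameter $\lambda=\eta^{-q}$ (for $q>0$ to be fixed) and some small $\bar\alpha>0$ produces $(\tilde v,\tilde p,\tilde R)$ with $\tilde\rho=h(\rho)+\mathcal E_s$ and $\mathring{\tilde R}=(1-\psi(\rho))\mathring R-\mathring{\mathcal E}^{(1)}$, where $|\mathcal E_s|+\|\mathring{\mathcal E}^{(1)}\|_0\leq C\lambda^{-(1-\bar\alpha)}$. Taking $\lambda$ sufficiently large in $\eta$ ensures $\tilde\rho\leq\eta$, $\tilde\rho\gtrsim\eta$ on $\supp\sigma$ (and $\tilde\rho=\rho>0$ elsewhere for $t>0$), as well as both $H^{-1}$ approximations $\leq\eta$.

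The strong subsolution bound $|\mathring{\tilde R}|\leq\tilde\rho^{1+\tilde\gamma}$ is then checked on three regions. On $\{\rho\leq\eta/8\}$ there is no modification and the inequality reduces to $\rho^{\gamma-\tilde\gamma}\leq 1$, valid for $\tilde\delta\leq 1$. On $\{\rho\geq\eta/2\}$, $\mathring{\tilde R}=-\mathring{\mathcal E}^{(1)}$ and $\tilde\rho\sim\eta$, so the bound reduces to $\lambda^{1-\bar\alpha}\geq C\eta^{-(1+\tilde\gamma)}$. In the intermediate range we use $\rho^{1+\gamma}/\tilde\rho^{1+\tilde\gamma}\lesssim\eta^{\gamma-\tilde\gamma}\ll 1$, which holds after shrinking $\eta$ (the statement for larger $\eta$ is trivial). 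For the adapted bounds \eqref{eq:adaptv}--\eqref{eq:adaptdtr}, interpolation between $\|\tilde v\|_k\leq C\lambda^k$ ($k=1,2$) gives $\|\tilde v\|_{1+\alpha}\leq C\lambda^{1+\alpha}$, while \eqref{eq:strong-time} gives $|\partial_t\tilde\rho|\leq C|\partial_t\rho|+C\lambda^{\bar\alpha}$. Since $\tilde\rho\gtrsim\eta$ on $\supp\sigma$, these translate into the upper bounds $\lambda^{1+\alpha}\leq C\eta^{-(1+\nu)}$ and $\lambda^{\bar\alpha}\leq C\eta^{-\nu}$; off $\supp\sigma$ the bounds are inherited from $(v,p,R)$.

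The main obstacle is compatibility of the lower bound $\lambda\geq\eta^{-(1+\tilde\gamma)/(1-\bar\alpha)}$ with the upper bounds $\lambda\leq\min\bigl(\eta^{-(1+\nu)/(1+\alpha)},\,\eta^{-\nu/\bar\alpha}\bigr)$. A short algebraic computation shows that the interval for $\lambda=\eta^{-q}$ is non-empty provided $\bar\alpha$ and $\alpha$ are chosen sufficiently small and $\tilde\gamma<\nu$; the latter is guaranteed by the hypothesis $\tilde\gamma<\gamma<\nu$. Fixing any admissible $q$ and tracking the dependence of all constants on $(v,p,R)$ (but not on $\eta$) then yields the desired adapted subsolution.
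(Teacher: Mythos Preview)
Your proof is correct and follows essentially the same route as the paper: construct a tensor $S$ supported in $\{\rho\gtrsim\eta\}$ satisfying condition (ii) of Proposition~\ref{prop:str}, apply that proposition with $\lambda$ a suitable negative power of $\eta$, and then verify the strong-subsolution and adapted bounds by balancing the lower constraint $\lambda^{1-\bar\alpha}\gtrsim\eta^{-(1+\tilde\gamma)}$ against the upper constraints $\lambda^{1+\alpha}\lesssim\eta^{-(1+\nu)}$ and $\lambda^{\bar\alpha}\lesssim\eta^{-\nu}$, which is possible precisely because $\tilde\gamma<\gamma<\nu$. The paper's construction differs only cosmetically: it uses a single cutoff $\psi(t)=\phi(\rho(t)/\eta)$ and sets $S=\psi\bigl(R-\tfrac{\eta}{8}\Id\bigr)$, so that both $\sigma$ and $\mathring S$ carry the same factor $\psi$; your two-function setup with $h$ and $\psi$ achieves the same effect. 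One small point worth making explicit in your write-up: the inequality $\psi(\rho)\rho^{1+\gamma}\leq\sigma/2$ on the transition zone $\{\eta/4<\rho<\eta/2\}$ uses that $h(\rho)\leq h(\eta/2)=\eta/4$ there (by monotonicity), so $\sigma\geq\rho-\eta/4$, together with $\psi(\rho)\leq\|\psi'\|_\infty(\rho-\eta/4)\lesssim\eta^{-1}(\rho-\eta/4)$; this yields the ratio $\lesssim\eta^{\gamma}$, which is small once $\eta\leq\tilde\delta$ is small.
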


\begin{proof}[Proof of Corollary \ref{cor:adapt}]
Set $\tilde{\delta}=4^{-1/\gamma}$ and assume $(v,p,R)$ be a $C^{\beta}$-adapted subsolution satisfying \eqref{eq:adaptv}-\eqref{eq:adaptdtr} with parameters $\gamma,\nu>0$, such that $\rho\leq\tilde\delta$. Then $\rho^{\gamma}\leq \tfrac14$. We may assume moreover, that $\eta\leq\tilde\delta$. 

	Let $\phi\in C_c^{\infty}(0,\infty)$ be a cut-off function such that $\phi(s)=1$ for $s\geq 1/2$, $\phi(s)=0$ for $s\leq 1/4$, and set 
	$$
	\psi(t)=\phi\left(\frac{\rho(t)}{\eta}\right).
	$$ 
	Then, using the bound on $\partial_t\rho$ from \eqref{eq:adaptdtr} we deduce $|\partial_t\psi|\leq C\eta^{-(1+\nu)}$. Here and in the subsequent proof we denote by $C$ generic constants which may depend on $(v,p,R)$. Define $S=\psi(R-\frac{\eta}{8}\Id)$. Then $S=\sigma\,\Id+\mathring{S}$, with
	$$
	\sigma(t)=\psi(t)\big(\rho(t)-\frac{\eta}{8}\big)\geq \frac{1}{2}\psi\rho,
	$$
	since $\rho\geq \eta/4$ on $\supp\,\psi$. Moreover, on $\supp\,\psi$
	$$
	|\mathring{S}|=|\psi\mathring{R}| \leq \psi\rho^{1+\gamma}\leq 2\rho^\gamma\sigma\leq \frac{1}{2}\sigma.
	$$
	Thus condition (ii) in Proposition \ref{prop:str} for $S$ is satisfied. 
	
	We apply the proposition with $\bar\alpha>0$, $\lambda\geq 1$ to be chosen below and obtain a smooth solution $(\tilde v,\tilde p,\tilde R)$ of \eqref{eq:ER} with properties \eqref{eq:strong-int}-\eqref{eq:strong-time}. In particular we obtain 
	\begin{align*}
	\tilde{R}&=R-S-\mathcal{E}=(1-\psi)R+\frac{\psi\eta}{8}\Id-\mathcal{E},\\
	\tilde{\rho}&=(1-\psi)\rho+\frac{\psi\eta}{8}-\frac{1}{3}\tr\mathcal{E},
	\end{align*}
	where $\|\mathcal E\|_0\leq C\lambda^{-1+\bar\alpha}$. Choose
	$$
	\lambda=(4C)^{\frac{1}{1-\bar\alpha}}\eta^{-\frac{1+\gamma}{1-\bar\alpha}},
	$$
	so that $\|\mathcal{E}\|_0\leq \frac14\eta^{1+\gamma}\leq \frac{1}{16}\eta$. Then, 
	observing that $\rho\geq \eta/4$ on $\supp\psi$, we deduce 
	$$
	\tilde\rho\geq (1-\psi)\frac{\eta}{4}+\psi\frac{\eta}{8}-\frac{\eta}{16}\geq  \frac{\eta}{16}\textrm{ on }\supp\psi,
	$$
	whereas $\tilde\rho=\rho$ otherwise. Furthermore, since $\psi=1$ if $\rho\geq \eta/2$, 
	$$
	\tilde\rho\leq (1-\psi)\frac{\eta}{2}+\psi\frac{\eta}{8}+\frac{\eta}{16}\leq \eta.
	$$
	Similarly, on $\supp\psi$  
	$$
	|\mathring{\tilde{R}}|\leq (1-\psi)|\mathring{R}|+\frac14\eta^{1+\gamma}\leq \frac12\eta^{1+\gamma}+\frac14\eta^{1+\gamma}\leq \tilde C\tilde\rho^{1+\gamma}.
	$$
	Thus, by choosing $\eta>0$ sufficiently small (such that $\eta^{\gamma-\tilde\gamma}<1/\tilde C$), we obtain $|\mathring{\tilde{R}}|\leq \tilde{\rho}^{1+\tilde\gamma}$, so that $(\tilde v,\tilde p,\tilde R)$ is a strong subsolution with parameter $\tilde\gamma$. 
Moreover, it is easy to see that \eqref{eq:approxadapt} holds. It remains to verify \eqref{eq:adaptv}-\eqref{eq:adaptdtr}. Since $\tilde{v}=v$ and $\tilde{\rho}=\rho$ outside $\supp\psi$, in the following we restrict to times $t\in \supp\psi$. 

From \eqref{eq:strong-est} and interpolation we obtain for any $\alpha\in[0,1]$
\begin{align*}
\|\tilde v\|_{1+\alpha}\leq \eta^{{-(1+\alpha)}\frac{1+\gamma}{1-\bar\alpha}},\quad |\partial_t\tr\mathcal{E}|\leq \eta^{-\bar{\alpha}\frac{1+\gamma}{1-\bar\alpha}},
\end{align*}
whereas from the definition of $\tilde\rho$ we have that 
\begin{align*}
|\partial_t\tilde\rho|\leq |\partial_t\rho|+|\partial_t\psi|\eta+|\partial_t\tr\mathcal{E}|\leq C(1+\eta^{-\nu}+\eta^{-\bar{\alpha}\frac{1+\gamma}{1-\bar\alpha}}).
\end{align*}
Therefore  \eqref{eq:adaptv}-\eqref{eq:adaptdtr} holds with constant $C$ and $\alpha>0$ provided 
$$
 (1+\alpha)\frac{1+\gamma}{1-\bar\alpha}<1+\nu,\quad \bar\alpha\frac{1+\gamma}{1-\bar\alpha}<\nu.
 $$
 Both inequalities can be satisfied by choosing $\bar\alpha,\alpha>0$ sufficiently small, provided $\gamma<\nu$. This concludes the proof. 
	\end{proof}

\section{Guide to the subsequent sections}\label{s:guide}

Let us briefly recall the  convex integration scheme in \cite{BDSV}, in which an approximating sequence 
$(v_q,p_q,R_q)$ of subsolutions is constructed. The various $C^0$ and $C^1$ norms of the subsolution are controlled in terms of parameters $\delta_q,\lambda_q$, where we can think of $\delta_q^{1/2}$ as an amplitude and $\lambda_q$ as a (spatial) frequency. 
This sequence of parameters is defined as
\begin{equation}\label{eq:dl2}
\lambda_q=2\pi [a^{b^q}],\qquad\delta_q=\lambda_q^{-2\beta},
\end{equation}
where
\begin{itemize}
	\item $[x]$ denotes the smallest integer $n\geq x$.
\item $\beta\in (0,1/3)$ and $b\in (1,3/2)$ control the H\"older exponent of the scheme and are required to satisfy
\begin{equation}\label{eq:bbeta}
	1<b<\frac{1-\beta}{2\beta};
	\end{equation}
\item $a\gg 1$ is chosen sufficiently large in the course of the proofs (in order to absorb various constants in the estimates).
\end{itemize}
In \cite{BDSV} the stage $q\mapsto q+1$ amounts to the statement that there exists a universal constant $M>1$ such that for $0<\alpha$ sufficiently small and sufficiently large $a\gg 1$ the following holds: given $(v_q,p_q,R_q)$ satisfying \eqref{eq:ER} and satisfying the estimates
\begin{align}
\|\mR_q\|_{0}&\leq\delta_{q+1}\lambda_q^{-3\alpha}\label{eq:2.11}\\
\|v_q\|_1&\leq M\delta_q^{1/2}\lambda_q\\
\delta_{q+1}\lambda_q^{-\alpha}&\leq\frac13\tr R_q(t)\leq\delta_{q+1},\label{eq:2.14}
\end{align} 
then there exists a solution $(v_{q+1},p_{q+1},R_{q+1})$ to \eqref{eq:ER} satisfying \eqref{eq:2.11}-\eqref{eq:2.14} with $q$ replaced by $q+1$. Moreover, we have
\[
\|v_{q+1}-v_q\|_0+\frac{1}{\lambda_{q+1}}\|v_{q+1}-v_q\|_1\leq M\delta_{q+1}^{1/2}.
\]

The proof of this statement consists of three steps:
\begin{itemize}
	\item[(1)] Mollification: $(v_q,R_q)\mapsto(v_\ell,R_\ell)$;\\
	\item[(2)] Gluing: $(v_\ell,R_\ell)\mapsto(\bar v_q,\bar R_q)$;\\
	\item[(3)] Perturbation $(\bar v_q,\bar R_q)\mapsto(v_{q+1},R_{q+1})$. 
\end{itemize}

In Section \ref{sect:gluing} we prove a localized (in time) version of the first two stages, and in Section \ref{sect:pert} a localized version of the perturbation.
We recall that the gluing stage, first introduced in \cite{Ise}, is needed in order to produce a Reynolds stress $\mathring{ \bar R}_q$ which has support in pairwise disjoint temporal regions of some suitable length in time, which is necessary in order to define perturbations through Mikado flows. 

In the sequel  we work with a sequence $(\lambda_q,\delta_q)$, $q=0,1,2,\dots$. Moreover, we fix $\alpha>0$, $\gamma>0$ and define
\begin{equation}\label{eq:ellqn}
\ell_{q}=\frac{\delta_{q+2}^{(1+\gamma)/2}}{\delta_{q}^{1/2}\lambda_{q}\lambda_{q+1}^{3\alpha/2}},
\end{equation} 
and
\begin{equation}\label{eq:tauqk}
\tau_{q}=\frac{\ell_{q}^{4\alpha}}{\delta_{q}^{1/2}\lambda_q}.
\end{equation}
As in \cite{BDSV}, we will require several inequalities between these parameters. First of all, we assume
\begin{equation}\label{e:lambdadelta}
	\frac{\delta_{q+1}^{1/2}\delta_q^{1/2}\lambda_q}{\lambda_{q+1}^{1-8\alpha}}\leq \delta_{q+2}.
\end{equation}
To verify this, we use \eqref{eq:dl2} and take logarithm base $\lambda_q$ to see that \eqref{e:lambdadelta} follows for sufficiently large $a\gg 1$ provided
$$
(b-1)\big[1-\beta(1+2b)]>8\alpha b.
$$
Thus, after fixing $b,\beta$ as in \eqref{eq:bbeta}, \eqref{e:lambdadelta} will be valid for sufficiently small $\alpha>0$ (depending on $b,\beta$).  
Next, we assume
\begin{equation}
\lambda_{q+1}^{-1}\leq\ell_{q}\leq\lambda_{q}^{-1}. \label{e:lambdaell}
\end{equation} 
The second inequality is immediate from the definition. Concerning the first, 
as in \cite{BDSV} we will in fact need the following sharpening: there exists $\overline{N}\in \N$ such that
\begin{equation}\label{e:lambdaellN}
	\lambda_{q+1}^{1-\overline{N}}\leq \ell_q^{\overline{N}+1}.
\end{equation}
To verify \eqref{e:lambdaellN} we argue as above: use \eqref{eq:dl2} and \eqref{eq:ellqn} and take logarithm base $\lambda_q$ to see that \eqref{e:lambdaellN} follows for sufficiently large $a\gg 1$ provided
$$
\overline{N}\big[(b-1)(1-\beta(b+1))-\gamma\beta b^2-\tfrac32\alpha b\big]>1+b+(1+\gamma)\beta b^2+\tfrac32\alpha b-\beta.
$$
It is easy to see that this inequality is valid, provided we choose (in this order):
\begin{itemize}
\item $b,\beta$ as in \eqref{eq:bbeta}, so that in particular $\beta(1+b)<1$;
\item $0<\alpha,\gamma$ are sufficiently small depending on $b,\beta$;
\item $\overline{N}\in\N$ sufficiently large, depending on $b,\beta,\alpha,\gamma$.	
\end{itemize}
In the following sections we will use the symbol $A\lesssim B$ to denote $A\leq CB$, where $C$ is a constant whose value may change from line to line, but only depends on the universal constant $M$, on the parameters $b,\beta,\alpha,\gamma$ chosen as above, and, if norms depending on $N\in \N$ are involved, also on $N$. In particular, $C$ will never depend on the choice of $a\gg 1$.

\section{Localized gluing step}\label{sect:gluing}

The aim of this section is to prove a time-localized version of the gluing procedure of Sections 3 and 4 in \cite{BDSV}:  on intervals $[T_1,T_2]\subset[0,T]$ instead of on the whole interval $[0,T]$. The main proposition is Proposition \ref{prop:gluing}, which combines the mollification and gluing steps indicated in Section \ref{s:guide}

In the the statement of Proposition \ref{prop:gluing}, we will need the following definitions.  

\begin{definition}\label{def:intervals}
Let $0\leq T_1<T_2\leq T$ such that $T_2-T_1>4\tau_q$. We define sequences of intervals $\{I_i\}$ and $\{J_i\}$ as follows. 
	Let 
	\begin{equation}\label{e:defIi}
	t_i=i\tau_q,\quad I_i=\Big[t_i+\frac13\tau_q, t_i+\frac23\tau_q\Big]\cap [0,T],
	\end{equation}
	and let 
	\begin{equation}\label{e:defn+-}
	\begin{split}
	\underline{n}&=\begin{cases}\min\left\{i:\,t_i-\tfrac23\tau_q\geq T_1\right\}&\textrm{ if }T_1>0\\ 0&\textrm{ if }T_1=0,\end{cases}	\\
	\overline{n}&=\max\Bigl\{i:\,t_i+\tfrac23\tau_q\leq T_2\Bigr\}.
	\end{split}
	\end{equation}
Moreover, define 
	\begin{equation*}
	J_i=\Big(t_i-\frac13\tau_{q},t_i+\frac13\tau_{q}\Big)\cap [0,T], \quad \underline{n}\leq i\leq \overline{n}\,,
	\end{equation*} 	
and
\begin{equation*}
J_{\underline{n}-1}=[0,t_{\underline{n}}-\frac23\tau_{q}),\quad J_{\overline{n}+1}=(t_{\overline{n}}+\frac23\tau_{q},T].
\end{equation*}
\end{definition}
Note that
\begin{equation}\label{e:decomposition}
[0,T]=J_{\underline{n}-1}\cup I_{\underline{n}-1}\cup \Bigl[J_{\underline{n}}\cup\dots \cup J_{\overline{n}}\Bigr]\cup I_{\overline{n}}\cup J_{\overline{n}+1}
\end{equation}
is a pairwise disjoint decomposition into intervals and
\begin{equation}\label{eq:tntn}
t_{\underline{n}}<T_1+\tfrac53\tau_q<T_2-\tfrac53\tau_q<t_{\overline{n}}.	
\end{equation}
Observe also that $\underline{n}\geq 1$ if $T_1>0$, whereas $\underline{n}=0$ and $J_{\underline{n}-1}\cup I_{\underline{n}-1}=\emptyset$ if $T_1=0$. 
In the following we denote, as usual, for $R$ whose trace depends only on time,
\[
R(x,t)=\rho(t)\Id+\mR(x,t).
\]

\begin{proposition}[Localized gluing step]\label{prop:gluing} 
 Let $b,\beta,\alpha,\gamma$ and $(\delta_q,\lambda_q,\ell_{q},\tau_{q})$ be as in Section \ref{s:guide} with $\alpha/\gamma<\beta/b$.  Let $[T_1,T_2]\subset[0,T]$ with $|T_2-T_1|>4\tau_q$. Let $(v_q,p_q,R_q)$ be a strong subsolution on $[0,T]$ which on $[T_1, T_2]$ satisfies the estimates
	\begin{align}
		\tfrac34\delta_{q+2}&\leq\rho_q\leq\tfrac72\delta_{q+1}\,,\label{eq:rhoest}\\
	\|\mR_q\|_0&\leq \rho_q^{1+\gamma}\,,\label{eq:Rq}\\
	\|v_q\|_{1+\alpha}&\leq M\delta_q^{1/2}\lambda_q^{1+\alpha}\,,\label{eq:vq1}\\
   |\partial_t\rho_q|&\leq\rho_q\delta_q^{1/2}\lambda_q\,,\label{eq:dtroq}
	\end{align} 
	with some constant $M>0$.
	Then, provided $a\gg 1$ is sufficiently large, there exists $(\bar v_q,\bar p_q,\bar R_q)$ solution of \eqref{eq:ER} on $[0,T]$ such that 
	\begin{equation}\label{eq:eq1}
	(\bar v_q,\bar p_q,\bar R_q)=(v_q,p_q,R_q) \quad \text{on $[0,T]\setminus[T_1,T_2]$},
	\end{equation}
	and on $[T_1,T_2]$ the following estimates hold:
	\begin{align}	
	\|\bar v_q-v_q\|_\alpha&\lesssim\bar\rho_q^{(1+\gamma)/2}\ell_{q}^{\alpha/3}\,,\label{eq:eq2}\\
	\|\bar v_q\|_{1+\alpha}&\lesssim\delta_q^{1/2}\lambda _q^{1+\alpha}\,,\label{eq:eq3}\\
	\|\mathring{\bar R}_q\|_0&\lesssim\bar\rho_q^{1+{\gamma}}\ell_{q}^{-\alpha}\,,\label{eq:eq4bis}\\
	\tfrac78\rho_q&\leq \bar\rho_q\leq \tfrac98\rho_q\,,\label{eq:eq4-1}\\ 
	|\partial_t\bar{\rho}_q|&\lesssim\bar\rho_q\delta_q^{1/2}\lambda_q\,, \label{eq:last0}
	\end{align}
	and
	\begin{equation}\label{eq:eq4}
\left|\int_{\T}|v_q|^2-|\bar{v}_q|^2\,dx\right|\lesssim \bar\rho_q^{1+\gamma}\ell_q^{2\alpha}\,.	
	\end{equation}
	Moreover, on $[t_{\underline{n}},t_{\overline{n}}]$ the additional estimates
	\begin{align}
	\|\bar v_q\|_{N+1+\alpha}&\lesssim\delta_q^{1/2}\lambda_q^{1+\alpha}\ell_{q}^{-N}\,,\label{eq:estb1}\\	
	\Big\|{\mathring{\bar R}_q}\Big\|_{N+\alpha}&\lesssim\bar\rho_q^{1+{\gamma}}\ell_{q}^{-N-\alpha}\,,\label{eq:Rnalpha2}\\
	\|(\partial_t+\bar v_q\cdot\nabla)\mathring{\bar R}_q\|_{N+\alpha}&\lesssim\bar\rho_q^{1+{\gamma}}\delta_q^{1/2}\lambda_q\ell_{q}^{-N-5\alpha}\label{eq:last12}
	\end{align}
	hold for any $N\geq 0$. Finally, 
	\begin{equation}\label{e:gluingsupport}
	\mathring{\bar{R}}_q\equiv 0\quad\textrm{ for }t\in \bigcup_{i=\underline{n}}^{\overline{n}}J_i.
	\end{equation}
	\end{proposition}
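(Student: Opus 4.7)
The plan is to follow the mollification-and-gluing strategy of Sections 3--4 of \cite{BDSV}, adapted so that the construction is the identity outside $[T_1,T_2]$ and the transition from $v_q$ to the exact Euler flows takes place on the boundary intervals $I_{\underline{n}-1}$ and $I_{\overline{n}}$.

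\textbf{Mollification.} First, I set $v_\ell = v_q *_x \varphi_{\ell_q}$ and define
\[
R_\ell = R_q *_x \varphi_{\ell_q} + v_\ell \otimes v_\ell - (v_q \otimes v_q)*_x \varphi_{\ell_q},
\]
so that $(v_\ell,p_\ell,R_\ell)$ solves \eqref{eq:ER} for a suitable $p_\ell$. Proposition \ref{p:mollify} and the hypotheses \eqref{eq:Rq}--\eqref{eq:vq1} yield the standard bounds $\|v_\ell-v_q\|_\alpha \lesssim \ell_q\,\delta_q^{1/2}\lambda_q$ and $\|\mathring R_\ell\|_{N+\alpha}\lesssim \bar\rho_q^{1+\gamma}\ell_q^{-N-\alpha}$.

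\textbf{Local exact flows and partition of unity.} For each $\underline{n}\leq i\leq\overline{n}$, I solve the classical Euler equations on $[t_i-\tau_q,t_i+\tau_q]$ with initial data $v_\ell(\cdot,t_i)$, producing smooth Euler flows $(v_i,p_i)$ with vanishing Reynolds stress. The choice of $\tau_q$ in \eqref{eq:tauqk} together with the transport estimates of \cite[Proposition~3.3]{BDSV} keeps $v_i$ close to $v_\ell$ in all $C^{N+\alpha}$ norms, with good control on the advective derivatives. At the boundary I set $v_{\underline{n}-1}=v_{\overline{n}+1}=v_q$ and choose a partition of unity $\{\chi_i\}$ in time such that $\chi_i\equiv 1$ on $J_i$, transitioning on the adjacent $I$-intervals with $|\partial_t^k\chi_i|\lesssim \tau_q^{-k}$, arranging the outermost cutoffs so that $\chi_{\underline{n}-1}\equiv 1$ on $[0,T_1]$ and $\chi_{\overline{n}+1}\equiv 1$ on $[T_2,T]$. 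I then define $\bar v_q = \sum_i \chi_i v_i$, together with the corresponding Leray-projected pressure $\bar p_q$ and the Reynolds stress
\[
\bar R_q = \sum_i \partial_t\chi_i\,\mathcal R(v_i-v_{i+1}) - \sum_i \chi_i(1-\chi_i)(v_i-v_{i+1})\otimes(v_i-v_{i+1}) + (\text{trace redistribution}),
\]
which makes \eqref{eq:ER} hold. Properties \eqref{eq:eq1} and \eqref{e:gluingsupport} are immediate from the support structure of the $\chi_i$, since on each $J_i$ with $\underline{n}\leq i\leq\overline{n}$ one has $\bar v_q=v_i$, a classical Euler flow.

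\textbf{Estimates.} The bounds \eqref{eq:eq2}--\eqref{eq:eq3} follow from the closeness of $v_i$ to $v_\ell$ (and hence to $v_q$) provided by the second step combined with \eqref{eq:vq1}; analogously, the interior higher-order bounds \eqref{eq:estb1}--\eqref{eq:last12} come from the transport estimates on $v_i$ on $[t_{\underline{n}},t_{\overline{n}}]$, with the sharpened inequality \eqref{e:lambdaellN} absorbing factors $\lambda_{q+1}^{-N}$ in remainder terms. The Reynolds-stress bound \eqref{eq:eq4bis} follows by applying Proposition \ref{prop:stat} to each $\mathcal R(v_i-v_{i+1})$ summand and estimating the quadratic term directly, each of size $\bar\rho_q^{1+\gamma}\ell_q^{-\alpha}$ via the Step 2 difference estimates. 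The trace bounds \eqref{eq:eq4-1}, the energy identity \eqref{eq:eq4} and the time-derivative bound \eqref{eq:last0} on $\bar\rho_q$ follow by testing \eqref{eq:ER} against $\bar v_q$ and from the explicit definition of $\bar\rho_q$, using \eqref{eq:dtroq} and the commutator inequality in Proposition \ref{p:mollify}.

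\textbf{Main obstacle.} I expect the boundary transition on $I_{\underline{n}-1}$ and $I_{\overline{n}}$ to be the central technical difficulty, since there one must patch a non-mollified $v_q$ (with only $C^{1+\alpha}$ control) to the smooth exact Euler flow $v_{\underline{n}}$ across a temporal gap of order $\tau_q$. The key estimate takes the form
\[
\|v_q(\cdot,t)-v_{\underline{n}}(\cdot,t)\|_\alpha \lesssim \tau_q\sup\|\partial_t v_q\|_\alpha + \|v_q(\cdot,t_{\underline{n}})-v_\ell(\cdot,t_{\underline{n}})\|_\alpha + \tau_q\sup\|\partial_t v_{\underline{n}}\|_\alpha,
\]
which is controlled by expressing $\partial_t v_q$ through \eqref{eq:ER}, using the mollification estimate for the middle term and the Euler stability bound for $v_{\underline{n}}$. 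The hypothesis $\alpha/\gamma<\beta/b$ enters precisely here, allowing $\ell_q^\alpha$ to dominate the mixed powers of $\delta_q,\delta_{q+1},\delta_{q+2}$ that appear in the boundary Reynolds-stress bound. A symmetric argument treats $I_{\overline{n}}$, completing the proof.
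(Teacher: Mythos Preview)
Your overall architecture matches the paper's, but there is a genuine gap in the mollification step that propagates through the rest of the argument. You mollify with the single global scale $\ell_q$; the paper instead mollifies with a \emph{time-dependent} scale
\[
\ell_{q,i}=\frac{\rho_q(t_i)^{(1+\gamma)/2}}{\delta_q^{1/2}\lambda_q^{1+3\alpha/2}},
\]
one for each gluing node $t_i$. This is not cosmetic. The hypothesis \eqref{eq:rhoest} only pins $\rho_q$ between $\tfrac34\delta_{q+2}$ and $\tfrac72\delta_{q+1}$, a gap of order $\lambda_q^{2\beta b(b-1)}$. The Gr\"onwall stability bound for the exact solution $v_i$ reads
\[
\|v_i-v_\ell\|_{N+\alpha}\lesssim \tau_q\,\rho_q^{1+\gamma}\,\ell^{-N-1-\alpha},
\]
and with the fixed $\ell=\ell_q$ (which is calibrated to $\delta_{q+2}$) this is larger than $\rho_q^{(1+\gamma)/2}$ by a factor $(\rho_q/\delta_{q+2})^{(1+\gamma)/2}$ whenever $\rho_q\sim\delta_{q+1}$. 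Concretely, one is led to the requirement $\beta b(b-1)(1+\gamma)\leq\tfrac32\alpha(2-b)$, which forces $\alpha$ to be bounded \emph{below} by a quantity of order $\beta$, contradicting the smallness of $\alpha$ built into Section~\ref{s:guide}. So neither \eqref{eq:eq2} nor \eqref{eq:eq3} can be recovered from your mollification. The paper's variable $\ell_{q,i}$ is designed precisely so that $\tau_q\rho_{q,i}^{(1+\gamma)/2}\ell_{q,i}^{-1}$ is uniformly small; this is the point singled out in the opening paragraph of the paper's proof.

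A second, smaller issue: you invoke Proposition~\ref{prop:stat} to estimate $\mathcal R(v_i-v_{i+1})$, but that lemma is a stationary-phase estimate for functions of the form $a(x)e^{ik\cdot\Phi}$, which $v_i-v_{i+1}$ is not. The correct device, as in \cite{BDSV}, is to pass to the vector potentials $z_i=(-\Delta)^{-1}\curl v_i$ and estimate $\|z_i-z_{\ell_{q,i}}\|_{N+\alpha}$ via the transport equation; this is what yields \eqref{eq:eq4bis} and \eqref{eq:Rnalpha2}--\eqref{eq:last12}, and it is also how the paper handles the boundary intervals $I_{\underline n-1}$, $I_{\overline n}$ (by bounding $\|z_{\ell_{q,i}}-z_q\|_\alpha$ directly from mollification plus Schauder), rather than through the time-difference estimate you propose.
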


\bigskip

\begin{proof}[Proof of Proposition \ref{prop:gluing}]\hfill

	The proof of Proposition \ref{prop:gluing} follows closely the gluing procedure \cite{BDSV}[Sections 3 and 4], with two main differences. One is  that the subsolution has to be changed only inside the interval $[T_1,T_2]$ and stay unchanged outside $[T_1,T_2]$. More precisely, recalling the decomposition \eqref{e:decomposition}, 
	\begin{itemize}
	\item the gluing procedure as in \cite{BDSV} will be performed in the interval
	\begin{equation}\label{e:gluingregion}
	\Bigl[J_{\underline{n}}\cup\dots \cup J_{\overline{n}}\Bigr]=\Bigl(t_{\underline{n}}-\tfrac13\tau_q,t_{\overline{n}}+\tfrac13\tau_q\Bigr);
	\end{equation}
	\item the subsolution will remain unchanged in $J_{\underline{n}-1}\cup J_{\overline{n}+1}$;
	\item the intervals $I_{\underline{n}-1}$ and $I_{\overline{n}}$ will be cutoff regions between the ``glued'' and ``unglued'' subsolutions.	
	\end{itemize}
The other one is that, since the trace part of $R_q$, namely $\rho_q$, has different lower and upper bounds on $[T_1,T_2]$ (respectively of the order $\delta_{q+2}$ and $\delta_{q+1}$), one needs to mollify with different parameters $\ell_{q,i}$ depending on $\rho_q(t_i)$ on $\tau_q$-neighbourhoods of the points $\{t_i\}$.

	\subsection*{Step 1 - Mollification}
	
	For all $\underline{n}\leq i\leq \overline{n}$, define
	\[
	\rho_{q,i}=\rho_q(t_i),\quad \ell_{q,i}=\frac{\rho_{q,i}^{(1+\gamma)/2}}{\delta_q^{1/2}\lambda_q^{1+3\alpha/2}}.
	\]
	Using \eqref{eq:rhoest} and assuming $a\gg 1$ is sufficiently large (as in \eqref{e:lambdaell}, depending on $\alpha,\gamma,b$) we may ensure that
	\begin{equation}\label{e:lambdaelli}
	\lambda_{q+1}^{-1}\leq \ell_q\leq \ell_{q,i}\leq\lambda_q^{-1}.
	\end{equation}
	Let $\phi$ be a standard mollification kernel in space and define
	\begin{align*}
	v_{\ell_{q,i}}&:=v_q\ast\phi_{\ell_{q,i}}\,,\\
	p_{\ell_{q,i}}&:=p_q\ast\phi_{\ell_{q,i}}+\tfrac{1}{3}(|v_q|^2\ast\phi_{\ell_{q,i}}-|v_{\ell_{q,i}}|^2)\,,\\
	\mR_{\ell_{q,i}}&:=\mR_q\ast\phi_{\ell_{q,i}}+(v_q{\mathring\otimes} v_q)\ast\phi_{\ell_{q,i}}-v_{\ell_{q,i}}{\mathring\otimes}v_{\ell_{q,i}}.
	\end{align*}
	Observe that with this definition the triple $(v_{\ell_{q,i}},p_{\ell_{q,i}},\mR_{\ell_{q,i}})$ is a solution of \eqref{eq:ER}. Using the estimates \eqref{eq:Rq}-\eqref{eq:vq1} together with the mollification estimates in Proposition \ref{p:mollify} and the choice of the mollification parameters 
	we deduce as in \cite[Proposition 2.2]{BDSV}:
	\begin{align}
	\|v_{\ell_{q,i}}-v_q\|_{\alpha}&\lesssim \delta_q^{1/2}\lambda_q^{1+\alpha}\ell_{q,i}\lesssim \rho_{q,i}^{(1+\gamma)/2}\ell_q^{\alpha/3}\,,\label{eq:vlvq1}\\
	\|v_{\ell_{q,i}}\|_{N+1+\alpha}&\lesssim\delta_q^{1/2}\lambda_q^{1+\alpha}\ell_{q,i}^{-N}\,,\label{eq:indata}\\
	\|\mR_{\ell_{q,i}}\|_{N+\alpha}&\lesssim\rho_q^{1+\gamma}\ell_{q,i}^{-N-\alpha}+\delta_q\lambda_q^{2+2\alpha}\ell_{q,i}^{2-N-\alpha}\nonumber\\
	&\lesssim \rho_q^{1+\gamma}\ell_{q}^{-N-\alpha}+\rho_{q,i}^{1+\gamma}\ell_{q}^{-N-\alpha}\,,\label{eq:Rln}\\
	\Big|\int_{\T}|v_q|^2-|v_{\ell_{q,i}}|^2\Big|&\lesssim \delta_q\lambda_q^{2+2\alpha}\ell_{q,i}^2=\rho_{q,i}^{1+\gamma}\lambda_q^{-\alpha}\,.\label{eq:vivlnorm}
	\end{align}

\subsection*{Step 2 - Gluing procedure}

Let $\{I_i\}_{\underline{n}\leq i\leq \overline{n}}$ be the sequence of intervals corresponding to $[T_1,T_2]$ according to Definition \ref{def:intervals}, 
	We define now a partition of unity on $[0,T]$ 
	\begin{equation*}
	\sum_{i=\underline{n}-1}^{\overline{n}+1}\chi_i\equiv 1
	\end{equation*}
	subordinate to the decomposition in \eqref{e:decomposition}. More precisely, for each $\underline{n}-1\leq i\leq \overline{n}+1$ the function $\chi_i\geq 0$ satisfies
	\begin{align*}
	\supp\chi_i&\subset I_{i-1}\cup J_i\cup I_i\,,\\
	\chi_i(t)&=1\text{ for $t\in J_i$}\,,\\
	|\partial_t^N\chi_{i}|&\lesssim\tau_{q}^{-N}\textrm{ for all }N\geq 0.
	\end{align*}
	We define
	\begin{equation}\label{e:defbarvq}
	\bar v_q=\sum_{i=\underline{n}-1}^{\overline{n}+1}\chi_iv_i,\quad \bar p_q^{(1)}=\sum_{i=\underline{n}-1}^{\overline{n}+1}\chi_ip_i,
	\end{equation}
where $(v_i,p_i)$ is defined as follows. For $\underline{n}\leq i\leq \overline{n}$ we define $(v_i,p_i)$ as the solution of
	\begin{equation}\label{eq:classic}
	\left\{\begin{aligned}
	&\partial_tv_i +\div(v_i\otimes v_i)+\nabla p_i=0\,,\\
	&\div v_i=0\,,\\
	&v_i(\cdot,t_i)=v_{\ell_{q,i}}(\cdot,t_i),
	\end{aligned}\right.
	\end{equation}
and set $(v_i,p_i)=(v_q,p_q)$ for $i\in \{\overline{n}+1,\underline{n}-1\}$.
Thus, we note first of all that $\div \bar v_q=0$ and moreover
$$
(\bar{v}_q,\bar p_q)=(v_q,p_q)\quad\textrm{ for }t\in [0,T]\setminus [T_1,T_2].
$$
	
Next, we define $\bar{R}_q$.  As in Section 4.1 of \cite{BDSV}, for $t\in I_i\cup J_{i+1}$ we have $\chi_i+\chi_{i+1}=1$ and therefore 
	\begin{equation*}
	\begin{split}
	\partial_t\bar v_q+\div(\bar v_q&\otimes \bar v_q)+\nabla \bar p_q=\\
	=&\partial_t\chi_i(v_i-v_{i+1})-\chi_i(1-\chi_i)\div((v_i-v_{i+1})\otimes(v_i-v_{i+1}))\\
	&-\div(\chi_iR_i+(1-\chi_i)R_{i+1}),
	\end{split}
	\end{equation*}
	where we wrote $R_i=0$ for $\underline{n}\leq i\leq \overline{n}$ and $R_i=R_q$ otherwise.
Thus, recalling the operator $\mathcal R$ defined in Proposition 4.1 \cite{BDSV} (see also \eqref{eq:Rdef}), set
\begin{align*}
\bar{R}_q^{(1)}&=\begin{cases}-\partial_t\chi_i\mathcal R(v_i-v_{i+1})+\chi_i(1-\chi_i)(v_i-v_{i+1})\mathring{\otimes}(v_i-v_{i+1})& t\in I_i,\\ 0&t\in J_i,\end{cases}\\
\bar{R}_q^{(2)}&=\sum_{i=\underline{n}-1}^{\overline{n}+1}\chi_iR_i=(\chi_{\underline{n}-1}+\chi_{\overline{n}+1})R_q,\\
\end{align*}
and 
$$
\bar{p}_q^{(2)}=\chi_i(1-\chi_i)\left(|v_i-v_{i+1}|^2-\fint_{\T}|v_i-v_{i+1}|^2\,dx\right).
$$
Finally, we define
$$
\bar{R}_q=\mathring{\bar{R}}_q^{(1)}+\mathring{\bar{R}}_q^{(2)}+\bar{\rho}_q\Id,\quad \bar{p}_q=\bar{p}_q^{(1)}+\bar{p}_q^{(2)},
$$
where
\begin{align}\label{eq:barrq}
\bar{\rho}_q=\rho_q+\frac13\Big(\fint_{\T}|v_q|^2-|\bar v_q|^2\Big).
\end{align}
By construction 
$$
\partial_t\bar{v}_q+\div(\bar{v}_q\otimes\bar{v}_q)+\nabla\bar{p}_q=-\div \bar{R}_q
$$
and  \eqref{eq:eq1} holds. 
Moreover 
$$
\mathring{\bar{R}}_q=0\quad\textrm{ for all }t\in \bigcup_{i=\underline{n}}^{\overline{n}}J_i.
$$

\bigskip

\subsection*{Step 3 - Stability estimates on classical solutions}
	
Let us consider for the moment $\underline{n}\leq i\leq \overline{n}$.
We recall from \cite[Proposition 3.1]{BDSV} that by the classical existence results on solutions of \eqref{eq:classic}, $(v_i,p_i)$ in \eqref{e:defbarvq} above is defined at least on an interval of length $\sim \|v_{\ell_{q,i}}\|_{1+\alpha}^{-1}$. By \eqref{eq:indata} and \eqref{eq:tauqk}
$$
\|v_{\ell_{q,i}}\|_{1+\alpha}\lesssim \delta_q^{1/2}\lambda_q^{1+\alpha}\leq \ell_q^{3\alpha}\tau_{q}^{-1},
$$
therefore indeed, provided $a\gg 1$ is sufficiently large, $v_i$ is defined on $I_{i-1}\cup J_i\cup I_i$ so that \eqref{e:defbarvq} is well defined. 

Next, we deduce from \eqref{eq:dtroq} that $|\partial_t\log\rho_q|\leq \delta_q^{1/2}\lambda_q=\tau_q^{-1}\ell_q^{4\alpha}$, so that, by assuming $a\gg 1$ is sufficiently large we may ensure that
\begin{equation}
\label{eq:rhononcost}
\rho(t_1)\leq 4 \rho(t_2)\quad\textrm{ for all }t_1,t_2\in I_{i-1}\cup J_i\cup I_i
\end{equation}
for any $i$. In particular $\rho_q\sim \rho_{q,i}$ in the interval $I_{i-1}\cup J_i\cup I_i$.
Then, reasoning as in \cite{BDSV}[Proposition 3.3], namely writing the transport equation along $v_{\ell{q,i}}$ for $v_i-v_{\ell{q,i}}$ and estimating the various terms on the left hand side (with the help of  \eqref{eq:indata} and \eqref{eq:Rln}), one reduces to a Gr\"onwall type inequality for the $C^{N+\alpha}$ norms of $v_i-v_{\ell{q,i}}$, namely
\begin{align*}
\|v_i-v_{\ell_{q,i}}\|_{N+\alpha}\lesssim \int_{t_i}^t\bigl(\tau_q^{-1} \|v_{\ell_{q,i}}-v_i\|_{N+\alpha}+\ell_{q,i}^{-N-1-\alpha}\rho_q^{1+\gamma}\bigr)\,ds.
\end{align*}
Using now the estimate \eqref{eq:rhononcost}, one obtains on $I_{i-1}\cup J_i\cup I_i$, as in \cite{BDSV}[Proposition 3.3],
\begin{equation}\label{eq:vivell}
\begin{split}
\|v_i-v_{\ell_{q,i}}\|_{N+\alpha}&\lesssim \tau_q\rho_{q,i}^{1+\gamma}\ell_{q,i}^{-N-1-\alpha}\\
&\lesssim \rho_{q,i}^{(1+\gamma)/2}\ell_{q,i}^{-N+\alpha}. 
\end{split}
\end{equation}
The case $N=0$, together with \eqref{eq:vlvq1} leads to \eqref{eq:eq2}, whereas the case $N=1$ leads to
$$
\|v_i-v_{\ell_{q,i}}\|_{1+\alpha}\lesssim \delta_q^{1/2}\lambda_q^{1+3\alpha/2}\ell_{q,i}^{\alpha}\leq \delta_q^{1/2}\lambda_q^{1+\alpha},
$$
so that, combining with \eqref{eq:vq1} and  with \eqref{eq:indata} we deduce that \eqref{eq:eq3} is verified.
More generally, following \cite{BDSV}[Proposition 4.3] we deduce from \eqref{eq:indata} and \eqref{eq:vivell} that
$$
\|\bar v_q\|_{1+N+\alpha}\lesssim \delta_q\lambda_q^{1+\alpha}\ell_{q,i}^{-N}
$$
for all $t$ in the region defined by \eqref{e:gluingregion}.
Thus \eqref{eq:estb1} is verified.

\bigskip

	\subsection*{Step 4 - Estimates on the new Reynolds stress}\hfill
	
Following \cite{BDSV} we define the vector potentials $z_i=(-\Delta)^{-1}\textrm{curl }v_i$, $z_{\ell_{q,i}}=(-\Delta)^{-1}\textrm{curl }z_{\ell_{q,i}}$ and obtain, as in \cite{BDSV}[Proposition 3.4] the analogous estimates to \eqref{eq:vivell}:
\begin{align*}
\|z_i-z_{\ell_{q,i}}\|_{N+\alpha}&\lesssim \tau_q\rho_{q,i}^{1+\gamma} \ell_{q,i}^{-N-\alpha}\,,\\
\|(\partial_t+v_{\ell_{q,i}}\cdot\nabla)(z_i-z_{\ell_{q,i}})\|_{N+\alpha}&\lesssim \rho_{q,i}^{1+\gamma} \ell_{q,i}^{-N-\alpha}\,
\end{align*}
valid in $I_{i-1}\cup J_i\cup I_i$ for any $\underline{n}\leq i\leq \overline{n}$. Proceeding as in the proof of \cite{BDSV}[Proposition 4.4] we deduce, using \eqref{eq:rhononcost}, that on $J_{\underline{n}}\cup\dots\cup J_{\overline{n}}$
\begin{equation}\label{e:Rqnew}
\begin{split}
\|\mathring{\bar R}_q\|_{N+\alpha}&\lesssim\tau_{q}^{-1}\|z_i-z_{i+1}\|_{N+\alpha}+\|v_i-v_{i+1}\|_{N+\alpha}\|v_i-v_{i+1}\|_{\alpha}\\
&\lesssim \rho_{q}^{1+\gamma}\ell_{q,i}^{-N-\alpha}\,,
\end{split}
\end{equation}
and similarly
\begin{align*}
\|(\partial_t+\bar{v}_q\cdot\nabla)\mathring{\bar R}_q\|_{N+\alpha}\lesssim \tau_q^{-1}\rho_{q}^{1+\gamma}\ell_{q,i}^{-N-\alpha}\,
\end{align*}
for all $t$ as in \eqref{e:gluingregion}. This shows that \eqref{eq:Rnalpha2} and \eqref{eq:last12} hold.	

Next, we estimate $\bar{\rho}_q$, recalling its definition in \eqref{eq:barrq}.
As in Proposition 4.5 of \cite{BDSV} one has that
\begin{align}
\Big|\frac{d}{dt}\int_{\T}|\bar v_q|^2-|v_{\ell_{q,i}}|^2\Big|&\lesssim\|v_{\ell_{q,i}}\|_1\|\mathring{\bar R}_{\ell_{q,i}}\|_0\lesssim \delta_q^{1/2}\lambda_q^{1+\alpha}\ell_{q,i}^{-\alpha}\rho_q^{1+\gamma}.\label{e:energydifferences}
\end{align}
Integrating \eqref{e:energydifferences} in $t\in I_{i-1}\cup J_i\cup I_i$ and using
\eqref{eq:vivlnorm} and \eqref{eq:ellqn} we deduce
\begin{equation*}
|\bar{\rho}_q-\rho_q|\lesssim  \rho_q^{1+\gamma}\ell_q^{3\alpha}\lambda_q^{\alpha}\lesssim  \rho_q^{1+\gamma}\ell_q^{2\alpha}.
\end{equation*}
This proves in particular that $\bar\rho_q\sim\rho_q$ and \eqref{eq:eq4-1}. 
Similarly, using the equation \eqref{eq:ER} for $(v_q,p_q,R_q)$ and $(v_{\ell_{q,i}},p_{\ell_{q,i}},\mathring{R}_{\ell_{q,i}})$, we also deduce
\begin{equation}\label{eq:eq4bis0}
\Big|\frac{d}{dt}\int_{\T}|v_{\ell_{q,i}}|^2-|v_q|^2\Big|\lesssim \delta_q^{1/2}\lambda_q^{1+\alpha}\ell_{q,i}^{-\alpha}\rho_q^{1+\gamma},
\end{equation}
hence
\[
\Big|\int_{\T}|v_{\ell_{q,i}}|^2-|v_q|^2\Big|\leq \rho_q^{1+\gamma}\ell_q^{2\alpha}
\]
and together with \eqref{e:energydifferences} \eqref{eq:eq4} is proved.
Combining \eqref{eq:eq4bis0} with \eqref{e:energydifferences}, \eqref{e:lambdaelli}  and \eqref{eq:rhoest} we obtain
\begin{align*}
|\partial_t\bar{\rho}_q-\partial_t\rho_q|&\lesssim  \rho_q^{1+\gamma}\delta_q^{1/2}\ell_{q,i}^{-\alpha}\lambda_q^{1+\alpha}\\
&\lesssim \rho_q\delta_q^{1/2}\lambda_q(\delta_{q+1}^{\gamma}\lambda_{q}^{\alpha(1+b)})\\
&\lesssim \rho_q\delta_q^{1/2}\lambda_q,
\end{align*}
where we have used \eqref{eq:dl2} and the assumption $\alpha b<\beta\gamma$ in the last line.
This shows \eqref{eq:last0}.

It remains to estimate $\|\mathring{\bar R}_q(t)\|_0$ on $[T_1,T_2]$ in order to verify \eqref{eq:eq4bis} for the Reynolds stress. Observe that we already obtained \eqref{e:Rqnew} on $J_{\underline{n}}\cup\dots\cup J_{\overline{n}}$ (recall \eqref{e:decomposition}). Moreover, on $J_{\underline{n}-1}\cup J_{\overline{n}+1}$ the subsolution remains unchanged, so there is nothing to prove. Finally, in the cut-off regions $I_{\underline{n}-1}$ and $I_{\overline{n}}$ we need to carry on the estimate \eqref{e:Rqnew} with $z_i=z_q$ and $v_i=v_q$, $z_{q}=(-\triangle)^{-1}\curl v_{q}$. In particular we need to estimate  $\|z_{\ell_{q,i}}-z_q\|_{\alpha}$. 
	One has that, by \eqref{e:mollify2}, Schauder estimates and \eqref{eq:vq1},
	\begin{align*}
	\|z_{\ell_{q,i}}-z_q\|_{\alpha}&\lesssim\|z_q\|_{2+\alpha}\ell_{q,i}^{2}\\
	&\lesssim\|\curl v_q\|_{\alpha}\ell_{q,i}^2\\
	&\lesssim\tau_q\rho_{q,i}^{1+\gamma}\ell_q^{-\alpha}.
	\end{align*}
	Therefore, \eqref{eq:eq4bis} follows.

\end{proof}

\begin{remark}\label{rem:moreint}
	Proposition \ref{prop:gluing} can easily be extended to a pairwise disjoint union of intervals $[T_1^{(i)},T_2^{(i)}]\subset [0,T]$ with
	$T_2^{(i)}-T_1^{(i)}\geq 4\tau_q$ and $T_2^{(i)}<T_1^{(i+1)}$.
\end{remark}

\section{Perturbation step}
\label{sect:pert}

\begin{proposition}\label{prop:pert}
	 Let $b,\beta,\alpha,\gamma$ and $(\delta_q,\lambda_q,\ell_q,\tau_q)$ be as in Section \ref{s:guide} with $\alpha/\gamma<2 \beta$. Let $[T_1,T_2]\subset [0,T]$ and let $(v,p,R)$ be a smooth strong subsolution on $[T_1,T_2]$. Further, let $S\in C^\infty(\T\times[T_1,T_2];S^{3\times3})$ be a smooth matrix field with
	 \begin{equation}\label{e:Ssigma0}
	 S(x,t)=\sigma(t)\Id+\mathring{S}(x,t),	
	 \end{equation}
	 where $\mathring{S}$ is traceless and $\sigma$ satisfies 
	 \begin{align}
	 0\leq\sigma(t)&\leq 4\delta_{q+1}\,,\label{eq:sigmadelta}\\
	 {|\partial_t\sigma|}& \lesssim\sigma \delta_q^{1/2}\lambda_q\,.\label{eq:dtsigmabar}
	 \end{align}
	 Moreover, assume that for any $N\geq 0$
	 \begin{align}
	 {\|\mathring{S}\|_{N+\alpha}}&\lesssim \sigma^{{\gamma}+1}\ell_{q}^{-N-\alpha}\,,\label{eq:sn}\\
	 \|v\|_{N+1+\alpha}&\lesssim \delta_q^{1/2}\lambda_q^{1+\alpha}\ell_{q}^{-N}\,,\label{eq:vn}\\
	 {\|(\partial_t+v\cdot\nabla)\mathring{S}\|_{N+\alpha}}&\lesssim \sigma^{{\gamma}+1}\ell_{q}^{-N-5\alpha}\delta_q^{1/2}\lambda_q\,.\label{eq:stimadttr}
	 \end{align}
	Finally, assume that
	 	\begin{equation}\label{e:suppSring}
	 \supp\,\mathring{S}\subset\T\times\bigcup_{i}I_i,	
	 \end{equation}
	 where $\{I_i\}_{i}$ are the intervals defined in \eqref{e:defIi}.
	 
	 Then, provided $a\gg 1$ is sufficiently large (depending on the implicit constants in \eqref{eq:dtsigmabar}-\eqref{eq:stimadttr}), there exist smooth $(\tilde v,\tilde p)\in C^\infty(\T\times[T_1,T_2];\R^3\times\R)$ and a smooth matrix field $\mathcal E\in C^\infty(\T\times[T_1,T_2];\mathcal S^{3\times 3})$, $\supp\,\mathcal E\subset\T\times\supp\,S$ such that, setting $\tilde R=R-S-\mathcal E$, the triple $(\tilde v,\tilde p,\tilde R)$ is a strong subsolution with
	\begin{equation}\label{eq:6.9}
	\int_{\T}|\tilde v|^2+\tr\tilde R\,dx=\int_{\T}|v|^2+\tr R\,dx\quad \forall\,t.
	\end{equation}
	Moreover, we have the estimates
	\begin{align}
	\|\tilde v-v\|_0&\leq \frac M2\delta_{q+1}^{1/2}\label{eq:v1v}\\
	\|\tilde v-v\|_{1+\alpha}&\leq\frac M2\delta_{q+1}^{1/2}\lambda_{q+1}^{1+\alpha}\label{eq:v1va}
	\end{align}
	where $M$ is a geometric constant, and the error $\mathcal E$ satisfies the estimates
	\begin{equation}\label{eq:eest}
	\|\mathcal E\|_0\leq \delta_{q+2}\lambda_{q+1}^{-3\alpha},
	\end{equation}
	\begin{equation}\label{eq:eestdt}
	|\partial_t\tr\mathcal E|\leq \delta_{q+2}\delta_{q+1}^{1/2}\lambda_{q+1}^{1-3\alpha}.
	\end{equation}
\end{proposition}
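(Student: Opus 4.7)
The strategy is to adapt the perturbation step of the H\"older $1/3$-scheme \cite{BDSV} to the localized setting, constructing a Mikado-based perturbation $w = w_o + w_c$ with temporal support in a mild enlargement of $\bigcup_i I_i$. To set up the Mikado building blocks, observe first that by \eqref{eq:sigmadelta} and \eqref{eq:sn},
\[
\bigl|\mathring{S}(x,t)\bigr|/\sigma(t)\lesssim\sigma(t)^\gamma\lesssim\delta_{q+1}^\gamma,
\]
which is arbitrarily small for $a\gg 1$, so the normalized tensor $\Id+\mathring{S}/\sigma$ takes values in $\mathcal N=B_{1/2}(\Id)\subset\S_+^{3\times 3}$. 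Apply Lemma \ref{lemma:mikado} on $\mathcal N$ to obtain the Mikado flows $W$ with Fourier expansion \eqref{eq:mikFourier}; by Remark \ref{rem:Mconst} this fixes the constant $M$. Next, take a smooth partition of unity $\{\chi_i\}$ in time with $\supp\chi_i$ a slight enlargement of $I_i$ (pairwise disjoint, $|\partial_t^N\chi_i|\lesssim\tau_q^{-N}$), and on each such interval let $\Phi_i$ denote the inverse flow of $v$ starting at the center of $I_i$. Since $\tau_q\|v\|_1\lesssim\ell_q^{4\alpha}\ll1$, the flow is uniformly close to the identity on $\supp\chi_i$, giving the standard transport estimates on $D\Phi_i$.

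With these ingredients, define the principal perturbation $w_o$ and divergence-free corrector $w_c$ exactly as in \cite[Section 5]{BDSV} (compare the analogous construction in the proof of Proposition \ref{prop:str}), adapted to the localized setting by multiplying with $\chi_i$ and summing in $i$, with amplitude $\sigma(t)^{1/2}$ and oscillation frequency $\lambda_{q+1}$. Set $\tilde v=v+w_o+w_c$. By the Mikado identity \eqref{eq:mik3} the low-frequency part of $w_o\otimes w_o$ reconstructs $S=\sigma\Id+\mathring S$ exactly, so the new traceless Reynolds stress $\mathring{\mathcal E}^{(1)}$ decomposes into the three standard errors inverted by the operator $\mathcal R$ of Section \ref{ss:R}: the transport error $\mathcal R((\partial_t+v\cdot\nabla)w_o)$, the Nash error $\mathcal R((w_o+w_c)\cdot\nabla v)$, and the oscillation error $\mathcal R(\div(w_o\otimes w_o-S)+\dots)$. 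A constant-in-space piece $\mathcal E^{(2)}=\tfrac13\bigl(\fint_\T|\tilde v|^2-|v|^2-\tr S\bigr)\Id$ is added to enforce the energy identity \eqref{eq:6.9}; set $\mathcal E=\mathring{\mathcal E}^{(1)}+\mathcal E^{(2)}$. Since all $\chi_i$ vanish outside $\supp\,S$, so do $w_o$ and $w_c$, giving $\supp\mathcal E\subset\T\times\supp\,S$ automatically.

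The size bounds \eqref{eq:v1v}-\eqref{eq:v1va} are routine: $\|w_o\|_0\lesssim\sigma^{1/2}\leq 2\delta_{q+1}^{1/2}$ and $\|w_o\|_{1+\alpha}\lesssim\sigma^{1/2}\lambda_{q+1}^{1+\alpha}$, with $w_c$ smaller by a factor $\lambda_{q+1}^{-1}$. The hard part is the error estimate \eqref{eq:eest}: I would apply the stationary-phase bound of Proposition \ref{prop:stat} mode by mode to the high-frequency pieces, taking $N=\overline N$ from \eqref{e:lambdaellN} so that the loss $\ell_q^{-\overline N-\alpha}$ arising from the derivatives in \eqref{eq:sn}, \eqref{eq:vn}, \eqref{eq:stimadttr} is strictly beaten by the gain $\lambda_{q+1}^{-\overline N}$. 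The principal quadratic contribution is bounded by $\sigma^{1+\gamma}\lambda_{q+1}^{-1+\alpha}$, and the hypothesis $\alpha/\gamma<2\beta$ together with \eqref{eq:dl2} yields $\sigma^\gamma\lesssim\delta_{q+1}^\gamma\lesssim\lambda_{q+1}^{-2\beta\gamma}\leq\lambda_{q+1}^{-\alpha}$, producing the target $\delta_{q+2}\lambda_{q+1}^{-3\alpha}$ after absorbing $\alpha$-losses; the transport and Nash errors carry an extra factor $\delta_q^{1/2}\lambda_q/\lambda_{q+1}$, acceptable by \eqref{e:lambdadelta}. For \eqref{eq:eestdt}, differentiate $\tr\mathcal E=\fint|\tilde v|^2-|v|^2-\tr S$ in time against the equations \eqref{eq:ER} for $v$ and $\tilde v$, integrate by parts using $\div v=\div\tilde v=0$, and estimate with $\|\tilde v\|_1\lesssim\delta_{q+1}^{1/2}\lambda_{q+1}$, $\|\mathring{\tilde R}\|_0\leq\delta_{q+2}\lambda_{q+1}^{-3\alpha}$, and \eqref{eq:dtsigmabar}. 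The main obstacle throughout is the parameter balancing that makes the stationary-phase inversion land exactly at $\delta_{q+2}\lambda_{q+1}^{-3\alpha}$; the time localization is a mild modification because the temporal cutoffs $\chi_i$ interact cleanly with the Mikado flows $W(\cdot,\lambda_{q+1}\Phi_i)$ and with the flow maps $\Phi_i$.
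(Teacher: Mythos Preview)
Your overall plan mirrors the paper's, but there is a genuine gap in how you set up the cutoffs. You propose purely temporal cutoffs $\chi_i(t)$ with pairwise disjoint supports contained in slight enlargements of the $I_i$. Since the $I_i$ have length $\tau_q/3$ and are separated by gaps $J_i$ of length $2\tau_q/3$, such $\chi_i$ cannot cover $[T_1,T_2]$. On those gaps your perturbation $w$ vanishes identically, yet only $\mathring S$ is supported in $\bigcup_i I_i$; the trace part $\sigma(t)$ is in general positive on all of $[T_1,T_2]$. Hence for $t$ in a gap,
\[
\mathcal E^{(2)}(t)=\tfrac13\Bigl(\fint_{\T}|\tilde v|^2-|v|^2-\tr S\Bigr)\Id=-\sigma(t)\,\Id,
\]
which is of size $\delta_{q+1}$ and destroys \eqref{eq:eest}. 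Your claim that ``the low-frequency part of $w_o\otimes w_o$ reconstructs $S$ exactly'' fails precisely on these gaps. Note that disjointness of the cutoff supports is essential (so that $w_{oi}\otimes w_{oj}=0$ for $i\neq j$ in the Mikado algebra), so you cannot simply let the $\chi_i$ overlap in time.

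The paper resolves this tension with the \emph{squiggling stripes} of \cite{BDSV}: space--time cutoffs $\eta_i(x,t)$ with $\supp\eta_i\cap\supp\eta_j=\emptyset$ for $i\neq j$, but which wiggle in $x$ so that for every fixed $t$ one has $\sum_i\int_{\T}\eta_i^2(x,t)\,dx\geq c_0>0$. Setting $\sigma_i(x,t)=|\T|\,\eta_i^2\,\sigma\big/\sum_j\int\eta_j^2$ then ensures $\sum_i\fint_{\T}\sigma_i=\sigma$ for all $t$, so that $\fint|w_o|^2$ reproduces $\tr S$ up to an oscillatory remainder amenable to Proposition~\ref{prop:stat}. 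This is the missing ingredient in your construction.

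A second, smaller issue: your route to \eqref{eq:eestdt} via the energy identities for $v$ and $\tilde v$ produces a term $\fint\nabla w:\mathring R$, and the hypotheses of the proposition give no quantitative control on $\mathring R$ (only that $(v,p,R)$ is a strong subsolution, with no bound on $\rho$). The paper instead writes $\tfrac{d}{dt}\fint F=\fint D_tF$ with $D_t=\partial_t+v\cdot\nabla$ applied directly to the decomposition of $|\tilde v|^2-|v|^2-\tr S$ in terms of $w_o,w_c$; since $D_t$ annihilates the fast phases $e^{i\lambda_{q+1}k\cdot\Phi_i}$, each piece is handled by stationary phase without ever touching $\mathring R$.
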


\begin{proof}
	The proof is a localization of the argument carried on in Section 5 of \cite{BDSV}.	The point is that the matrix field that has to be ``absorbed'' by the perturbation flow is not the whole $R$ as in \cite{BDSV} but $S$. 
	
\subsection*{Step 1 - Squiggling Stripes and the Stress Tensor $\tilde S_i$}
	
	Let $\{I_i\}_{i}$ be the intervals in \eqref{e:defIi} so that \eqref{e:suppSring} holds, and 
	set
	\[
	J_i=\Big(t_i-\frac13\tau_{q},t_i+\frac13\tau_{q}\Big).
	\]
	Following \cite{BDSV}[Lemma 5.3], we choose a family of smooth nonnegative $\eta_i=\eta_i(x,t)$ with the following properties:
	\begin{enumerate}
		\item [(i)] $\eta_i\in C^\infty(\T\times[T_1,T_2])$ with $0\leq\eta_i(x,t)\leq1$ for all $(x,t)$;\\
		\item [(ii)] $\supp\,\eta_i\cap\supp\,\eta_j=\emptyset$ for $i\neq j$;\\
		\item [(iii)] $\T\times I_i\subset\{(x,t):\,\eta_i(x,t)=1\}$;\\
		\item[(iv)] $\supp\,\eta_i\subset\T\times J_i\cup I_i\cup J_{i+1}=\T\times(t_i-\frac13\tau_{q},t_{i+1}+\frac13\tau_{q})\cap[0,T]$;\\
		\item [(v)] There exists a positive geometric constant $c_0>0$ such that, for any $t\in[0,T]$
		\[
		\underset{i}{\sum}\int_{\T}\eta_i^2(x,t)\,dx\geq c_0.		
		\]\\
		\item[(vi)] For $N,m\geq0$, $\|\partial_t^N\eta_i\|_m\leq C(N,m)\tau_{q}^{-N}$
		\end{enumerate}
	\bigskip
	
	Define 
	\[
	\sigma_i(x,t):=|\T|\frac{\eta_i^2(x,t)}{\underset{_j}{\sum}\int\eta_j^2(y,t)\,dy}\sigma(t),
	\]
	so that $\sum_i\int_{\T}\sigma_i\,dx=\int_{\T}\sigma\,dx$, and, using
	the inverse flow ${\Phi}_i$ starting at time $t_i$
	\begin{equation*}
	\left\{\begin{aligned}
	(\partial_t+v\cdot\nabla){\Phi}_i&=0\\
	 \Phi_i(x,t_i)&=x
	\end{aligned}\right.
	\end{equation*}
	set
	\begin{align*}
	S_i&=\sigma_i\Id+\eta_i^2\mathring S,\\
	\tilde S_i&=\frac{\nabla{\Phi}_iS_i(\nabla{\Phi}_i)^T}{\sigma_i}.
	\end{align*}
	One can check from the properties of $\eta_i$ and from \eqref{eq:sigmadelta} that 
	\begin{align*}
	\|\sigma_i\|_0&\leq\frac{4\delta_{q+1}}{c_0}\\
	\|\sigma_i\|_N&\lesssim \delta_{q+1}\,,
	\end{align*}
	and moreover, using \eqref{e:suppSring}, 
	\begin{equation}\label{eq:SiS}
	\frac13\tr\sum_i\int_{\T}S_i\,dx=\sigma=\frac13\tr S.
	\end{equation}
	
	We next claim that for all $(x,t)$
	\begin{equation}\label{e:tildeSivalues}
	\tilde S_i(x,t)\in B_{1/2}(\Id)\subset\mathcal S^{3\times3}_+,
	\end{equation}
	where $B_{1/2}(\Id)$ is the ball of radius $1/2$ centred at the identity $\Id$ in $\mathcal S^{3\times3}$.
	Indeed, by classical estimates on transport equations (see e.g.~\cite{BDSV}[Appendix B])
	\begin{equation}\label{e:DPhi-est}
	\|\nabla\Phi_i-\Id\|_0\lesssim\tau_{q}\delta_q^{1/2}\lambda_q^{1+\alpha}\leq \ell_{q}^{\alpha}\,
	\end{equation}
	for $t\in J_i\cup I_i\cup J_{i+1}$, since this is an interval of length $\sim\tau_q$.
	Using \eqref{eq:sigmadelta}, \eqref{eq:sn}  and \eqref{e:lambdaell} we also have, for any $N\geq 0$
	\begin{equation}\label{e:Ssigma-est}
	\bigg\|\frac{\eta_i^2\mathring S}{\sigma_i}\bigg\|_N\lesssim \bigg\|\frac{\mathring S}{\sigma}\bigg\|_N\lesssim \sigma^{\gamma}\ell_q^{-N-\alpha}\lesssim \delta_{q+1}^\gamma\lambda_{q+1}^{\alpha}\ell_q^{-N}=\lambda_{q+1}^{\alpha-2\beta\gamma}\ell_q^{-N}\,.
	\end{equation}
	Thus, using the decomposition 
	\begin{align*}
	\tilde S_i-\Id=\nabla{\Phi}_i\frac{\eta_i^2\mathring S}{\sigma_i}\nabla{\Phi}_i^T+\nabla{\Phi}_i\nabla{\Phi}_i^T-\Id
	\end{align*}
	we deduce
	\[
	|\tilde S_i-\Id|\lesssim \lambda_{q+1}^{\alpha-2\beta\gamma}+\ell_q^{ \alpha}\leq\frac{1}{2},
	\]
	provided $a\gg 1$ is sufficiently large, since we assumed $\alpha<2\beta\gamma$. This verifies \eqref{e:tildeSivalues}.

\subsection*{Step 2 - The perturbation $w$}

Now we can define the perturbation term as
	\begin{equation*}
	w_o:=\underset{i}{\sum}(\sigma_i)^{1/2}(\nabla{\Phi}_i)^{-1}W(\tilde S_i,\lambda_{q+1}{\Phi}_i)=\sum_iw_{oi},
	\end{equation*}
	where $W$ are the Mikado flows defined in Section \ref{subs:mikado}, see also Remark \ref{rem:Mconst}.
	Notice that the supports of the $w_{oi}$ are disjoint and, using the Fourier series representation of the Mikado flows \eqref{eq:mikFourier},
	\begin{equation}\label{e:woi}
	w_{oi}:=\sum_{k\neq0}(\nabla{\Phi}_i)^{-1}b_{i,k}e^{i\lambda_{q+1}k\cdot{\Phi}_i},
	\end{equation}
	where we write
	\[
	b_{i,k}(x,t):=(\sigma_i(x,t))^{1/2}a_k(\tilde S_i(x,t))A_k.
	\]	
	We define $w_c$ so that $w=w_o+w_c$ is divergence free:
	\begin{equation*}
	\begin{split}
	w_c:=\frac{-i}{\lambda_{q+1}}\sum_{i,k\neq0}\nabla((\sigma_i)^{1/2}a_k(\tilde S_i))\times\frac{\nabla\Phi_i^T(k\times A_k)}{|k|^2}e^{i\lambda_{q+1}k\cdot\Phi_i}=\sum_{i,k\neq0}c_{i,k}e^{i\lambda_{q+1}k\cdot\Phi_i}.
	\end{split}
	\end{equation*}
	Define then 
	\begin{align*}
	w&=w_o+w_c\\
	\tilde v&=v+w\\
	\tilde p&=p+|w|^2-\sum_i\sigma_i,\\
	\mathcal E(x,t)&=\mathring{ \mathcal E}^{(1)}(x,t)+\mathcal E^{(2)}(t), 
	\end{align*}
	where
	\begin{equation}
	\mathring{\mathcal E}^{(1)}:=\mathcal R\Bigl[\partial_t\tilde v+\div(\tilde v\otimes \tilde v)+\nabla \tilde p +\div(R-S)\Bigr],
	\end{equation}
with $\mathcal R$ being the operator defined in \eqref{eq:Rdef}, and
	\[
	\mathcal E^{(2)}(t):=\frac13\Big(\fint_{\T}|\tilde v|^2-|v|^2-\tr S\,dx\Big)\Id.
	\]
		Equations \eqref{eq:6.9} and \eqref{eq:ER} follow by construction.
	
\subsection*{Step 3 - Estimates on the perturbation}
	
	The estimates on $\tilde v$ and $\mathring{\mathcal E}^{(1)}$ follow similarly to the ones for $v_{q+1}$ and $\mR_{q+1}$ in Section 5 and 6 of \cite{BDSV}. As $\mathring{\mathcal E}^{(1)}$ is defined through the operator $\mathcal R$, in order to estimate its parts we use the stationary phase Proposition \ref{prop:stat}. In order to bound the terms involved we require analogous estimates to the ones in Section 5 of \cite{BDSV}. First of all, generalizing \eqref{e:DPhi-est}, for all $N\geq 0$ and $t\in J_i\cup I_i\cup J_{i+1}$
	\begin{align}
	\|(\nabla\Phi_i)^{-1}-\Id\|_N+\|\nabla\Phi_i-\Id\|_N&\lesssim \tau_q\delta_q^{1/2}\lambda_q^{1+\alpha}\ell_{q}^{-N}\nonumber\\
	&\lesssim \ell_q^{\alpha-N}\label{eq:phi1}\\
	\|(\partial_t+v\cdot\nabla)\nabla\Phi_i\|_N&\lesssim \|\nabla\Phi_i-\Id\|_0\|v\|_{N+1}+\|\nabla\Phi_i-\Id\|_N\|v\|_{1}\nonumber\\
	&\lesssim \ell_q^{\alpha-N}\delta_q^{1/2}\lambda_q^{1+\alpha}\nonumber\\
	&\lesssim\delta_q^{1/2}\lambda_q\ell_{q}^{-N},\label{eq:phi2}
	\end{align}
	where we used the identity
	$(\partial_t+v\cdot\nabla)\nabla\Phi_i=-(\nabla\Phi_i-\Id)Dv$, estimates \eqref{eq:vn} and \eqref{e:DPhi-est} and the fact the flow $\Phi_i$ is defined on a time interval of length $\tau_{q}$. Then, the following estimates follow precisely as in \cite{BDSV}[Propositions 5.7 and 5.9]:
	\begin{align}
	\|\tilde S_i\|_N&\lesssim\ell_{q}^{-N},\label{eq:pr1}\\
	\|b_{i,k}\|_N&\lesssim\delta_{q+1}^{1/2}|k|^{-6}\ell_{q}^{-N}\label{eq:pr2}\\
	\|c_{i,k}\|_{N}&\lesssim\delta_{q+1}^{1/2}\lambda_{q+1}^{-1}|k|^{-6}\ell_{q}^{-N-1}\label{eq:pr3}\\
	\|D_t\tilde S_i\|_N&\lesssim\tau_{q}^{-1}\ell_{q}^{-N}\label{eq:pr4}\\
	\|D_tc_{i,k}\|_N&\lesssim\delta_{q+1}^{1/2}\tau_{q}^{-1}\ell_{q}^{-N-1}\lambda_{q+1}^{-1}|k|^{-6}.\label{eq:pr5}
	\end{align}
	In obtaining \eqref{eq:pr1} we use \eqref{e:Ssigma-est} and the assumption that $\alpha<2\beta \gamma$.  
	In turn, from these estimates the estimates on $\tilde v$ in \eqref{eq:v1v}-\eqref{eq:v1va} follow precisely as in \cite{BDSV}[Corollary 5.8]. 

\subsection*{Step 4 - Estimates on the new Reynolds term $\mathring{\mathcal{E}}^{(1)}$}

The estimates for $\mathring{\mathcal{E}}^{(1)}$ are analogous to  those obtained for the new Reynolds stress in Section 6 of \cite{BDSV}. Therefore we obtain, using \eqref{e:lambdadelta},
\begin{equation}\label{e:E1-est}
\|\mathring{\mathcal{E}}^{(1)}\|_0\lesssim \frac{\delta_{q+1}^{1/2}\delta_q^{1/2}\lambda_q}{\lambda_{q+1}^{1-5\alpha}}\leq \delta_{q+2}\lambda_{q+1}^{-3\alpha}.
\end{equation}
	
\subsection*{Step 5 - Estimates on the new Reynolds term $\mathcal E^{(2)}$}
	
	Now we turn to $\mathcal E^{(2)}$, consider the decomposition
	\begin{equation}\label{e:E2split}
	\begin{split}
	|\mathcal E^{(2)}|&=\frac13\Big|\fint_{\T}|\tilde v|^2-|v|^2-\tr S\Big|\\
	&\leq\frac13\Big|\fint_{\T} |w_o|^2-\tr S\Big|+\frac13\Big|\fint_{\T} w_o\cdot w_c+w_c\cdot w_o+w_c\otimes w_c\Big|\\
	&+\frac13\Big|\fint_{\T}w\cdot v+v\cdot w\Big|
	\end{split}
	\end{equation}
	and proceed as in \cite{BDSV}[Proposition 6.2]. Concerning the first term in \eqref{e:E2split}, using \eqref{e:woi} and \eqref{eq:Ck} we have
	\begin{equation*}
	w_o\otimes w_o=\sum_iw_{oi}\otimes w_{oi}=\sum_iS_i+\sum_{i,k\neq0}\sigma_i\nabla\Phi_i^{-1}C_k(\tilde S_i)\nabla\Phi_i^{-T}e^{i\lambda_{q+1}k\cdot\Phi_i}\,.
	\end{equation*} 
	Using \eqref{eq:aphiint}, the properties of $C_k$ in \eqref{eq:Ck} and \eqref{e:lambdaellN} we obtain
	\begin{equation*}
	\Big|\int_{\T}\sum_{i,k\neq0}\sigma_i\nabla\Phi_i^{-1}C_k(\tilde S_i)\nabla\Phi_i^{-T}e^{i\lambda_{q+1}k\cdot\Phi_i}\Big|\lesssim \sum_{k\neq0}\frac{\delta_{q+1}\ell_{q}^{-\overline{N}}}{\lambda_{q+1}^{\overline{N}}|k|^{\overline{N}}}\lesssim \frac{\delta_{q+1}}{\lambda_{q+1}}\,.
	\end{equation*}
	Furthermore, as in \cite{BDSV}[Proposition 6.2], we also obtain
	\begin{equation*}
	\Big|\fint_{\T} w_o\otimes w_c+w_c\otimes w_o+w_c\otimes w_c\Big|
	+\Big|\fint_{\T}w\otimes v+v\otimes w\Big|
	\lesssim \frac{\delta_q^{1/2}\delta_{q+1}^{1/2}\lambda_q^{1+2\alpha}}{\lambda_{q+1}}\,.
	\end{equation*}
	Thus, combining with \eqref{eq:SiS} and \eqref{e:lambdadelta} we arrive at
	\begin{equation*}
	|\mathcal E^{(2)}|\lesssim  \frac{\delta_q^{1/2}\delta_{q+1}^{1/2}\lambda_q^{1+2\alpha}}{\lambda_{q+1}}\leq \frac{\delta_{q+2}}{\lambda_{q+1}^{6\alpha}}\,.
	\end{equation*}
	Combining with \eqref{e:E1-est} and taking $a\gg 1$ sufficiently large we thus verify 
	\eqref{eq:eest}. 
	
\subsection*{Step 6 - Estimates on $\partial_t\tr\mathcal{E}$}
Observe that $\mathring{\mathcal{E}}^{(1)}$ is traceless, whereas $\mathcal{E}^{(2)}$ is a function of $t$ only. 
	In order to  estimate the time derivative of $\mathcal E^{(2)}$, observe that, since $v$ is solenoidal, for every $F=F(x,t)$
	\[
	\frac{d}{dt}\int_{\T}F=\int_{\T}D_tF,
	\]
	where $D_t=\partial_t+v\cdot\nabla$.
	Therefore, using again the decomposition in \eqref{e:E2split}, we have
	\begin{align}\label{eq:stimadte}
	\Big|\frac{d}{dt}\int_{\T}\tilde v\otimes \tilde v-&v\otimes v-S\Big|\leq\Big|\int_{\T}D_t\Big(w_o\otimes w_c+w_c\otimes w_o+w_c\otimes w_c\Big)\Big|\nonumber\\
	&+\Big|\int_{\T}D_t\Big(w\otimes v+v\otimes w\Big)\Big|\\
	&+\Big|\int_{\T}D_t\Big(\sum_{i,k\neq0}\sigma_i\nabla \Phi_i^{-1} C_k(\tilde S_i)\nabla\Phi_i^{-T}e^{i\lambda_{q+1}k\cdot\Phi_i}\Big)\Big|\,.\nonumber
	\end{align}
Let us first estimate $\|D_tw_o\|_0$. 
\begin{align*}
D_tw_o&=\sum_{i,k\neq0}D_t\Big((\sigma_i)^{1/2}a_k(\tilde S_i)\Big)\nabla\Phi_i^{-1}A_ke^{i\lambda_{q+1}k\cdot\Phi_i}\\
&+\sum_{i,k\neq0}(\sigma_i)^{1/2}a_k(\tilde S_i)(\nabla v)^T\nabla\Phi_i^{-1}A_ke^{i\lambda_{q+1}k\cdot\Phi_i}\\
&=\sum_{i,k\neq0}d_{n,k} e^{i\lambda_{q+1}k\cdot\Phi_i}+\sum_{i,k\neq0}g_{i,k}e^{i\lambda_{q+1}k\cdot\Phi_i}.
\end{align*}
First notice that, by using \eqref{eq:vn}, \eqref{eq:sigmadelta}, \eqref{eq:phi1}, the estimates on the Fourier coefficients of the Mikado flows, and arguing as in \eqref{eq:phi2}, we obtain
\[
\|g_{i,k}\|_0\lesssim\frac{\delta_{q+1}^{1/2}\delta_q^{1/2}\lambda_q}{|k|^6}.
\]
As for the coefficients $d_{i,k}$, the estimate follows from \eqref{eq:pr4} and from
\[
\|D_t(\sigma_i^{1/2})\|_0\lesssim\tau_{q}^{-1}\delta_{q+1}^{1/2}.
\]
Therefore
\begin{equation*}
\|D_tw_o\|_0\lesssim{\delta_{q+1}^{1/2}\delta_q^{1/2}\lambda_q}\ell_{q}^{-4\alpha}\,.
\end{equation*}
Similarly we can deduce
\begin{equation*}
\|D_tw_c\|_0\lesssim{\delta_{q+1}^{1/2}\delta_q^{1/2}\lambda_q}\ell_{q}^{-4\alpha}\lambda_{q+1}^{-1}\,.
\end{equation*}
Combining with $\|w_o\|_0+\|w_c\|_0\lesssim\delta_{q+1}^{1/2}$ and using \eqref{e:lambdadelta}-\eqref{e:lambdaell}, we arrive at
\begin{align*}
\Big|\int_{\T}D_t\Big(w_o\otimes w_c+w_c\otimes w_o+w_c\otimes w_c\Big)\Big|&\lesssim \delta_{q+1}\delta_q^{1/2}\lambda_{q}\ell_q^{-4\alpha}\\
&\lesssim \delta_{q+2}\delta_{q+1}^{1/2}\lambda_{q+1}^{1-3\alpha}
\end{align*}
The estimate of the third term in \eqref{eq:stimadte} is entirely similar.
Finally, the estimate of the term involving $D_t(w\otimes v)$ follows by the estimates above on the terms given by $D_tw_o$ and the stationary phase Proposition \ref{prop:stat}. More precisely, we write
$$
D_t\Big(w\otimes v\Big)=\sum_{i,k\neq 0}h_{i,k}e^{i\lambda_{q+1}k\cdot\Phi_i},
$$
with
$$
\|h_{i,k}\|_{N}\lesssim \delta_{q+1}^{1/2}\delta_{q}^{1/2}\lambda_q\ell^{-4\alpha-N}_q,
$$
leading to
\begin{align*}
\Big|\int_{\T}D_t\Big(w\otimes v\Big)\Big|&\lesssim \frac{\delta_{q+1}^{1/2}\delta_{q}^{1/2}\lambda_q\ell^{-4\alpha-\overline{N}}_q}{\lambda_{q+1}^{\overline{N}}}\leq \frac{\delta_{q+1}^{1/2}\delta_q^{1/2}\lambda_{q}}{\lambda_{q+1}}\\
&\lesssim  \delta_{q+2}\delta_{q+1}^{1/2}\lambda_{q+1}^{1-6\alpha}
\end{align*} 
as before, using \eqref{e:lambdaellN} and the trivial estimate $1\leq \delta_{q+1}^{1/2}\lambda_{q+1}$. 
As a result, we obtain the estimate \eqref{eq:eestdt}.

\end{proof}

\section{From strict to adapted subsolutions}
\label{sect:stradapt}

The aim of this section is to prove Proposition \ref{prop:stradapt}. The proof is based on an iterative convex integration scheme similar in structure to that implemented in \cite{DanSz}. Here however, each stage contains  an additional localized gluing step and the estimates in the localized perturbation step are $1/3$-type estimates.

\begin{proof}[Proof of Proposition \ref{prop:stradapt}]\hfill


\subsection*{Step 1 - Setting the parameters of the scheme}
	Let $(v,p,R)$ be a smooth strict subsolution and
	let $0<\beta<1/3$, $\nu>0$ be as in the statement of the proposition.	
	Choose $b>1$ according to \eqref{eq:bbeta}, furthermore let $\bar\eps>0$ such that 
\begin{equation}\label{eq:beps}
	b(1+\bar\eps)<\frac{1-\beta}{2\beta}.
\end{equation}
Then, let $\tilde\delta,\tilde\gamma>0$ be the constants obtained in Corollary \ref{cor:strstr}, and choose $0<\alpha<1$ and $0<\gamma<\hat\gamma\leq\tilde\gamma$ so that the conditions in Section \ref{s:guide} and the inequalities \eqref{e:lambdadelta}-\eqref{e:lambdaellN} are satisfied,
	\begin{equation}\label{eq:hatnu}
		\nu>\frac{1-3{\beta}+\alpha}{2{\beta}}
	\end{equation} 
and furthermore
	\begin{equation}\label{eq:param}
    \frac{\alpha b}{\beta}<\hat{\gamma}<\frac{3\alpha}{2\beta},\quad 0<{\gamma}<\hat{\gamma}-\frac{\alpha}{2\beta}.
    \end{equation}
   Having fixed $b,\beta,\alpha,\gamma,\hat{\gamma}$ we may choose $\overline{N}\in\N$ so that \eqref{e:lambdaellN} is also valid. For $a\gg1$ sufficiently large (to be determined) we then define $(\lambda_q,\delta_q)$ as in \eqref{eq:dl2}.	Thus we are in the setting of Section \ref{s:guide}. 
	
\subsection*{Step 2 - From strict to strong subsolution}

We apply Corollary \ref{cor:strstr} to obtain from $(v,p,R)$ a strong subsolution $(v_0,p_0,R_0)$ with $\delta=\delta_1$ such that the properties \eqref{eq:str1}-\eqref{eq:str6} hold. We claim that, with such a choice of the parameters, $(v_0, p_0, R_0)$ satisfies
	\begin{align}
	\frac34\delta_1&\leq\rho_0 \leq\frac54\delta_1\label{eq:v01}\\
	\|\mathring{ R}_0(t)\|_0&\leq\rho_0^{1+\hat\gamma}\label{eq:R3}\\
		\|v_0\|_{1+\alpha}&\leq \delta_0^{1\slash2}\lambda_0^{1+\alpha}\label{eq:barv3bis}\\
	|\partial_t\rho_0|&\leq\delta_1\delta_0^{1/2}\lambda_0\,.\label{eq:dtrho3}
	\end{align}	
	Indeed, \eqref{eq:v01} and \eqref{eq:R3} follow directly from \eqref{eq:str1}-\eqref{eq:str2} since $\delta=\delta_1$. In order to verify \eqref{eq:barv3bis}-\eqref{eq:dtrho3} we need to choose $\tilde\eps>0$ in \eqref{eq:str3}-\eqref{eq:str6} so that
	$$
	\delta_1^{-(1+\tilde\eps)}\leq \delta_0^{1/2}\lambda_0.
	$$
	According to the definition of $(\lambda_q,\delta_q)$ this is valid by our choice of $\tilde\eps$ in \eqref{eq:beps} above. In turn, the constants in \eqref{eq:str3}-\eqref{eq:str6} can be absorbed by a sufficiently large $a\gg 1$. 
	
	\subsection*{Step 3 - Inductive construction of $(v_q,p_q,R_q)$}
	
	 Starting from $(v_0,p_0,R_0)$, we show how to construct inductively a sequence $\{(v_q,p_q,R_q)\}_{q\in\N}$ of smooth strong subsolutions with
	\[
	R_q(x,t)=\rho_q(t)\Id+\mR_q(x,t)
	\]
	which satisfy the following properties:
	\begin{enumerate}
		\item [$(a_q)$] For all $t\in[0,T]$
		\begin{equation*}
		\int_{\T}|v_q|^2+\tr R_q=\int_{\T}|v_0|^2+\tr R_0;
		\end{equation*}
		\item [$(b_q)$] For all $t\in[0,T]$
		\begin{equation*}
		\|\mR_q(t)\|_0\leq\rho_q^{1+\gamma};
		\end{equation*}
		\item [$(c_q)$] If $2^{-j}T<t\leq2^{-j+1}T$ for some $j=1,\dots,q$, then
		\begin{equation*}
		\frac38\delta_{j+1}\leq\rho_q\leq4\delta_{j};
		\end{equation*}
		\item [$(d_q)$] For all $t\leq 2^{-q}T$ 
		\begin{equation*}
		\|\mR_q(t)\|_0\leq\rho_q^{1+\hat{\gamma}}, \quad \tfrac34\delta_{q+1}\leq\rho_q\leq\tfrac54\delta_{q+1};
		\end{equation*}
		\item [$(e_q)$] If $2^{-j}T<t\leq2^{-j+1}T$ for some $j=1,\dots,q$, then
		\begin{align*}
		\|v_q\|_{1+\alpha}&\leq M\delta_j^{1/2}\lambda_j^{1+\alpha}\,,\\
		|\partial_t\rho_q|&\leq \delta_{j+1}\delta_j^{1/2}\lambda_j\,,
		\end{align*}
		whereas if $t\leq 2^{-q}T$, 
		\begin{align*}
		\|v_q\|_{1+\alpha}&\leq M\delta_q^{1/2}\lambda_q^{1+\alpha}\,,\\
		|\partial_t\rho_q|&\leq  \delta_{q+1}\delta_q^{1/2}\lambda_q\,.
		\end{align*} 
		\item [$(f_q)$] For all $t\in[0,T]$  and $q\geq 1$
		\begin{equation*}
		\|v_q-v_{q-1}\|_0\leq\frac{M}{2}\delta_q^{1/2}.
		\end{equation*}
	\end{enumerate}
	Thanks to our choice of parameters in Step 1 above, $(v_0,p_0,R_0)$ satisfies \eqref{eq:v01}-\eqref{eq:dtrho3} and therefore our inductive assumptions $(a_0)-(f_0)$. 
	
	\bigskip
	
	Suppose then $(v_q,p_q,R_q)$ is a smooth strong subsolution satisfying $(a_q)-(f_q)$. The construction of $(v_{q+1},p_{q+1},R_{q+1})$ is done in two steps: first a localized gluing step performed using Proposition \ref{prop:gluing} to get from $(v_{q},p_{q},R_{q})$ a smooth strong subsolution $(\bar v_q,\bar p_q,\bar R_q)$,  then a localized perturbation step done using Proposition \ref{prop:pert}  to get  $(v_{q+1},p_{q+1},R_{q+1})$ from $(\bar v_q,\bar p_q,\bar R_q)$.

	We apply Proposition \ref{prop:gluing} with 
	$$
	[T_1,T_2]=[0,2^{-q}T].
	$$
	Then $T_2-T_1\geq 4\tau_q$, provided $a\gg 1$ sufficiently large. Moreover, by $(d_q)-(e_q)$ and \eqref{eq:param}, $(v_q,p_q,R_q)$ fulfils the requirements of Proposition \ref{prop:gluing} on $[T_1,T_2]$ with parameters $\alpha,\hat{\gamma}>0$.
	
	Then, by Proposition \ref{prop:gluing} we obtain a smooth strong subsolution $(\bar v_q,\bar p_q,\bar R_q)$ on $[0,T]$ such that $(\bar v_q,\bar p_q,\bar R_q)$ is equal to $(v_q,p_q,R_q)$ on $[2^{-q}T,T]$ and on $[0,2^{-q}T]$ satisfies
	\begin{equation}\label{eq:vbarest}
	\begin{split}	
		\|\bar v_q-v_q\|_\alpha&\lesssim\bar\rho_q^{(1+\hat\gamma)/2}\ell_{q}^{\alpha/3}\,,\\
	\|\bar v_q\|_{1+\alpha}&\lesssim\delta_q^{1/2}\lambda_q{1+\alpha}\,,\\
	\|\mathring{\bar R}_q\|_{0}& \leq\bar\rho_q^{1+{\hat\gamma}}\ell_{q}^{-\alpha}\,,\\
	\tfrac58\delta_{q+1}&\leq\bar \rho_q\leq\tfrac32\delta_{q+1}\,,\\
	|\partial_t\bar\rho_q|&\lesssim  \delta_{q+1}\delta_q^{1/2}\lambda_q\,.
	\end{split}
	\end{equation} 
	Moreover, on $[0,  t_{\overline{n}}]$  one has
	\begin{equation}\label{eq:Rbarest}
	\begin{split}
	\|\bar v_q\|_{N+1+\alpha}&\lesssim\delta_q^{1/2}\lambda_q^{1+\alpha}\ell_q^{-N}\,,\\
	\|{\mathring{\bar  R}_q}\|_{N+\alpha}&\lesssim \bar\rho_q^{1+{\hat\gamma}}\ell_{q}^{-N-\alpha}\,,\\
	\|(\partial_t+\bar v_q\cdot\nabla)\mathring{\bar R}_q\|_{N+\alpha}&\lesssim\bar\rho_q^{1+{\hat\gamma}}\ell_{q}^{-N-\alpha}\delta_q^{1/2}\lambda_q.
	\end{split}
	\end{equation} 
	and
		\begin{equation}\label{eq:Rbarsupp}
		\mathring{\bar R}_q\equiv 0\quad\textrm{ for }t\in \bigcup_{i=0}^{\overline{n}}J_i.
		\end{equation}
Recalling Definition \ref{def:intervals} and \eqref{eq:tntn} observe that
\begin{equation}\label{eq:suppS}
[0,\tfrac342^{-q}T]\subset [0,t_{\overline{n}}],
\end{equation} 
provided $a\gg 1$ is chosen sufficiently large. Then, 
fix a cut-off function $\psi_q\in C^\infty_c([0,\tfrac342^{-q}T);[0,1])$	such that
	\begin{equation}
	\psi_q(t)=\begin{cases}
	1 & t\leq 2^{-(q+1)}T,\\
	0 & t>\tfrac34 2^{-q}T,\end{cases}
	\end{equation}
	and such that $|\psi'_{q}(t)|\lesssim 2^q$. By choosing $a\gg 1$ sufficiently large, we may assume that 
	\begin{equation}\label{eq:psiqder}
	|\psi'_{q}(t)|\leq\frac12\delta_q^{1/2}\lambda_q
	\end{equation}
	for all $q$. Then, set 
\[
 S=\psi_{q}^2(\bar R_q-\delta_{q+2}\Id).
\]
Using \eqref{eq:psiqder}, \eqref{eq:param} and \eqref{eq:vbarest}-\eqref{eq:suppS} we see that $S$ and $(\bar{v}_q,\bar{p}_q,\bar{R}_q)$ satisfy the assumptions of Proposition \ref{prop:pert} on the interval $[0,t_{\overline{n}}]$ with parameters $\alpha,\hat{\gamma}>0$. 

Proposition \ref{prop:pert} gives then a new subsolution $(v_{q+1},p_{q+1}, \bar R_q-S-\tilde{\mathcal E}_{q+1})$ with 
\begin{align*}
\|v_{q+1}-\bar{v}_q\|_0+\lambda_{q+1}^{-1-\alpha}\|v_{q+1}-\bar{v}_q\|_{1+\alpha}&\leq \frac{M}{2}\delta_{q+1}^{1/2}\,,\\
\int_{\T}|v_{q+1}|^2-\tr S-\tr\tilde{\mathcal E}_{q+1}=\int_{\T}|\bar v_q|^2&\quad\textrm{ for all }t\in[0,T], 
\end{align*}
and such that the estimates \eqref{eq:eest}-\eqref{eq:eestdt} hold for 
$\tilde{\mathcal E}_{q+1}$. 
Let
\[
R_{q+1}:=\bar R_q-S-\tilde{\mathcal E}_{q+1}.
\]

\smallskip

We claim that $(v_{q+1}, p_{q+1}, R_{q+1})$ is a smooth strong subsolution satisfying $(a_{q+1})-(f_{q+1})$. Notice that $(a_{q+1})$ is satisfied by construction. Since $(v_{q+1},p_{q+1},R_{q+1})=(v_q,p_q,R_q)$ for $t\geq 2^{-q}T$, we may restrict to $t\leq 2^{-q}T$ in the following, so that in particular \eqref{eq:vbarest} holds. 

Let us now prove $(b_{q+1})$. On the one hand
\begin{align*}
\|\mR_{q+1}\|_0&=\|(1-\psi_{q}^2)\mathring{\bar R}_q-\mathring{\mathcal E}_{q+1}\|_0\\
&\leq(1-\psi_{q}^2)\bar\rho_q^{1+\hat{\gamma}}\ell_{q}^{-\alpha}+\delta_{q+2}\lambda_{q+1}^{-3\alpha}\mathbbm{1}_{\{\psi_{q}>0\}},
\end{align*}
on the other hand
\begin{align*}
\rho_{q+1}&=(1-\psi_{q}^2)\bar\rho_q+\psi_{q}^2\delta_{q+2}+\tfrac13\tr\mathcal E_{q+1}\mathbbm{1}_{\{\psi_{q}>0\}}\\
&\geq(1-\psi_{q}^2)\bar\rho_q+\psi_{q}^2\delta_{q+2}-\delta_{q+2}\lambda_{q+1}^{-3\alpha}\mathbbm{1}_{\{\psi_{q}>0\}}.
\end{align*}
The question  is then whether there exists a suitable ${\gamma}$ such that
\begin{equation}\label{eq:gammaineq}
\begin{split}	
(1-\psi_{q}^2)&\bar\rho_q^{1+\hat{\gamma}}\ell_{q}^{-\alpha}+\delta_{q+2}\lambda_{q+1}^{-3\alpha}\mathbbm{1}_{\{\psi_{q}>0\}}\\
&\leq[(1-\psi_{q}^2)\bar\rho_q+\psi_{q}^2\delta_{q+2}-\delta_{q+2}\lambda_{q+1}^{-3\alpha}\mathbbm{1}_{\{\psi_{q}>0\}}]^{1+{\gamma}}.
\end{split}
\end{equation}
To this end set
\begin{align*}
F(s)&=(1-s)\bar{\rho}_q^{1+\hat\gamma}\ell_q^{-\alpha}+\delta_{q+2}\lambda_{q+1}^{-3\alpha}\,,\\
G(s)&=(1-s)\bar{\rho}_q+s\delta_{q+2}-\delta_{q+2}\lambda_{q+1}^{-3\alpha}\,,
\end{align*}
and observe that \eqref{eq:gammaineq} is equivalent to $F(\psi_{q}^2)\leq G^{1+\gamma}(\psi_{q}^2)$ if $\psi_{q}>0$, and follows from this inequality also in case $\psi_{q}=0$. In particular, \eqref{eq:gammaineq} follows from
\begin{enumerate}
\item[(i)] $F(0)\leq G^{1+\gamma}(0)$;
\item[(ii)]	$F'(s)\leq (1+\gamma)G^{\gamma}(s)G'(s)$.
\end{enumerate}
We note next that, since $2\beta\hat{\gamma}<3\alpha$, 
$$
\delta_{q+2}\lambda_{q+1}^{-3\alpha}\lesssim \bar{\rho}_q^{1+\hat\gamma},
$$
so that we have the estimates
$$
F(0)\lesssim \bar{\rho}_q^{1+\hat\gamma}\ell_q^{-\alpha},\quad G(0)\gtrsim \bar\rho_q
$$
and also clearly $G(s)\leq \bar\rho_q$. 
Then is it easy to check that (i) amounts to
$\bar{\rho}_q^{1+\hat{\gamma}}\ell_q^{-\alpha}\lesssim \bar{\rho}_q^{1+\gamma}$, and hence (using \eqref{e:lambdaell}) follows from
$$
\hat{\gamma}-\frac{\alpha}{2\beta}>\gamma
$$
whereas (ii) follows from $\gamma< \hat{\gamma}$, provided $a\gg 1$ is sufficiently large to absorb geometric constants. Thus, our choice of $\gamma$ in \eqref{eq:param} guarantees that \eqref{eq:gammaineq}, and hence $(b_{q+1})$ is satisfied.  

Consider now $(c_{q+1})$, where we only need to consider the case $j=q+1$, i.e.~the estimate on $[2^{-q-1}T, 2^{-q}T]$. Arguing as above, we see that
 \begin{equation*}
\delta_{q+2}(1-\lambda_{q+1}^{-3\alpha})\leq \rho_{q+1}(t)\leq \bar{\rho}_q(t)+\delta_{q+2}\lambda_{q+1}^{-3\alpha}\leq \tfrac32\delta_{q+1}+\delta_{q+2}\lambda_{q+1}^{-3\alpha}.
\end{equation*}
Therefore $(c_{q+1})$ holds, provided $a\gg 1$ is sufficiently large.

Similarly, concerning $(d_{q+1})$ note that for $t\leq 2^{-(q+1)}T$ we have $\psi_q(t)=1$, so that
$$
\delta_{q+2}(1-\lambda_{q+1}^{-3\alpha})\leq \rho_{q+1}\leq \delta_{q+2}(1+\lambda_{q+1}^{-3\alpha}).
$$
Moreover, as above, for $t\leq 2^{-(q+1)}T$ 
$$
\|\mathring{R}_{q+1}\|_0\leq \delta_{q+2}\lambda_{q+1}^{-3\alpha}\leq (\tfrac34\delta_{q+1})^{1+\hat\gamma},
$$
since $2\beta\bar\gamma<3\alpha$ and by choosing $a\gg 1$ sufficiently large.
Therefore $(d_{q+1})$ holds.

Concerning $(e_{q+1})$ it suffices again to restrict to $t\leq 2^{-q}T$, i.e.~the case $j=q+1$. From \eqref{eq:vbarest} and \eqref{eq:v1va} we deduce
\begin{align*}
\|v_{q+1}\|_{1+\alpha}&\leq \|v_{q+1}-\bar{v}_q\|_{1+\alpha}+ \|\bar{v}_q\|_{1+\alpha}\\
&\leq \frac{M}{2}\delta_{q+1}^{1/2}\lambda_{q+1}^{1+\alpha}+C\delta_q^{1/2}\lambda_q^{1+\alpha}\\
&\leq M \delta_{q+1}^{1/2}\lambda_{q+1}^{1+\alpha},
\end{align*}
where $C$ is the implicit constant in \eqref{eq:vbarest} which can be absorbed by choosing $a\gg 1$ sufficiently large. In a similar manner the estimate on $|\partial_t\rho_{q+1}|$ follows from  and \eqref{eq:eestdt}. 
Finally, $(f_{q+1})$ follows from \eqref{eq:vbarest} and \eqref{eq:v1v}.

\subsection*{Step 4. Convergence to an adapted subsolution}

We have then obtained a sequence $(v_q,p_q,R_q)$ satisfying $(a_q)-(f_q)$. 

From $(f_q)$, it follows that $(v_q,p_q)$ is a Cauchy sequence in $C^0$. Indeed, for $\{v_q\}$ this is clear. Regarding $\{p_q\}$ we may use \eqref{eq:ER} to write
$$
\Delta (p_{q+1}-p_q)=-\div\div\left(\mathring{R}_{q+1}-\mathring{R}_q+(v_{q+1}-v_q)\otimes v_q+v_{q+1}\otimes (v_{q+1}-v_q)\right),
$$ 
and use Schauder estimates. Similarly, also $\{R_q\}$ converges in $C^0$. Indeed, from the definition and using \eqref{eq:eq4} and  we have
\begin{align*}
\|R_{q+1}-R_q\|_0&=\|\bar{R}_q-R_q-S-\tilde{\mathcal{E}}_{q+1}\|_0\\
&\leq \|\bar R_q-R_q\|_0+\|S\|_0+\|\tilde{\mathcal{E}}_{q+1}\|_0\\
&\lesssim \delta_{q+1}.
\end{align*} 
Since for all $t>0$ there exists $q(t)\in\N$ such that 
\[
(v_q,p_q,R_q)(\cdot,t)=(v_{q(t)},p_{q(t)},R_{q(t)})(\cdot,t) \quad\forall\,q\geq q(t),
\]
then $(v_q,p_q,R_q)$ converges uniformly to $(\hat v, \hat p, \hat R)$ where $(\hat v, \hat p, \hat R)$ is a strong subsolution with 
\[
\|\hat R\|_0\leq \hat{\rho}^{1+\gamma}
\]
and, using \eqref{eq:str5} and $(a_q)$ 
\[
\int_{\T}|\hat v|^2+\tr\hat R=\int_{\T} |v|^2+ \tr R\quad\textrm{ for all }t\in[0,T].
\]
Furthermore, using \eqref{eq:str4} and $(f_q)$
\begin{align*}
\|\hat v-v\|_{H^{-1}}&\leq \|v_0-v\|_{H^{-1}}+\|v_0-\hat v\|_{H^{-1}}\\
&\lesssim \delta_1+\sum_{q=0}^{\infty}\|v_{q+1}- v_q\|_0\\
&\lesssim \delta_1^{1/2},
\end{align*}
and similarly
\begin{align*}
\|\hat v\otimes\hat v +\hat R-v\otimes v-R\|_{H^{-1}}&\leq \| v_0\otimes v_0 +R_0-v\otimes v-R\|_{H^{-1}}+\\
&\qquad+\|\hat v\otimes\hat v +\hat R-v_0\otimes v_0-R_0\|_{H^{-1}}\\
&\lesssim \delta_1+\|\hat v\otimes\hat v-v_0\otimes v_0\|_{0}+\|\hat R\|_0+\|R\|_0\\
&\lesssim \delta_1.
\end{align*}

Concerning the initial datum, from $(e_q)$ and $(f_q)$ we obtain by interpolation that $\hat v(\cdot,0)\in C^\beta$, and from $(d_q)$ we obtain that $\hat R(\cdot,0)=0$.

Finally, we verify conditions \eqref{eq:adaptv}-\eqref{eq:adaptdtr} for being an adapted subsolution. Let $t>0$. Then there exists $q\in\N$ such that $t\in[2^{-q}T, 2^{-q+1}T]$. By $(c_q)$ and $(e_q)$ 
\begin{align*}
\frac38\delta_{q+1}&\leq\hat{\rho}\leq4\delta_{q},\\
\|\hat v\|_{1+\alpha}&\leq M\delta_q^{1/2}\lambda_q^{1+\alpha}.	
\end{align*}
Therefore $\hat\rho^{-1}\geq\frac14\delta_q^{-1}$ and hence, using \eqref{eq:dl2} and \eqref{eq:hatnu}, we deduce 
\begin{align*}
\|\hat v\|_{1+\alpha}&\leq \hat{\rho}^{-(1+\nu)}
\end{align*}
for $a\gg 1$ sufficiently large.
Similarly, using $(e_q)$ and \eqref{eq:hatnu} we deduce
\begin{equation*}
|\partial_t\hat{\rho}|\leq \delta_{q+1}\delta_q^{1/2}\lambda_q\leq \delta_q^{1-\frac{1-\beta}{2\beta}}\leq \hat{\rho}^{-\nu}\,.
\end{equation*}
This completes the proof of Proposition \ref{prop:stradapt}.
\end{proof}

\section{From adapted subsolutions to solutions}
\label{sect:adaptsol}

The aim of this section is to prove Proposition \ref{prop:adaptsol}.  We will start now from an adapted subsolution and we will build through a convex integration scheme a sequence of strong subsolutions converging to a solution of the incompressible Euler equations. Here as in Proposition \ref{prop:stradapt} the convex integration scheme will need localized gluing and perturbation arguments, namely Propositions \ref{prop:gluing} and \ref{prop:pert}. However, the choice of the cut-off functions will be, as in \cite{DanSz}, dictated by the shape of the trace part of the Reynolds stress and not fixed a priori as in Proposition \ref{prop:stradapt}.

\begin{proof}[Proof of Proposition \ref{prop:adaptsol}: ]\hfill
	
\subsection*{Step 1 - Setting of parameters in the scheme.}

Let $(\hat v,\hat p,\hat R)$ be a $C^{\beta}$-adapted subsolution on $[0,T]$ satisfying \eqref{eq:RtrR} for some $\gamma>0$ and \eqref{eq:adaptv}-\eqref{eq:adaptdtr} for some $\alpha,\nu>0$ as in Definition \ref{d:adapted}, with 
\[
\frac{1-\beta}{2\beta}<1+\nu<\frac{1-\hat\beta}{2\hat\beta}.
\]
Fix $b>1$ so that
\begin{equation}\label{eq:bcond}
 b^2(1+\nu)<\frac{1-\hat\beta}{2\hat\beta}, \quad 2\hat\beta(b^2-1)<1.
\end{equation}
Observe that both \eqref{eq:RtrR} and \eqref{eq:adaptv}-\eqref{eq:adaptdtr} remain valid for any $\gamma'\leq\gamma$ and $\alpha'\leq\alpha$ (c.f.~Remark \ref{rem:gamma}). 
Then, we may assume that $\alpha,\gamma>0$ are sufficiently small, so that 
$(\bar v,\bar p,\bar R)$ satisfies \eqref{eq:RtrR} and \eqref{eq:adaptv}-\eqref{eq:adaptdtr} with these parameters, and furthermore choose $\tilde\gamma>0$ so that
\begin{equation}
\label{eq:gamma1ad}
\frac{\alpha b}{\beta}<{\gamma}<\frac{3\alpha}{2\beta},\quad \frac{\alpha b}{\beta}<\tilde \gamma<\gamma-\frac{\alpha}{2\beta }.
\end{equation}
Finally, having fixed $b,\beta,\hat\beta,\alpha,\gamma,\tilde\gamma$ we may choose $\overline{N}\in\N$ so that \eqref{e:lambdaellN} holds. 
For $a\gg1$ sufficiently large (to be determined) we then define $(\lambda_q,\delta_q)$ as in \eqref{eq:dl2}. Thus, we are in the setting of Section \ref{s:guide}.

\subsection*{Step 2 - The first approximation}

Let $(\hat v,\hat p,\hat R)$ be as in the statement of the proposition and fix $\eta=\delta_1$ (observe that $\delta_1$ depends on our choice of $a\gg 1$ which will be chosen sufficiently large in the subsequent proof; thus, if necessary we choose $\eta$ smaller than given in the statement of the proposition - this is certainly no loss of generality). We apply Corollary \ref{cor:adapt} to obtain another $C^\beta$-adapted subsolution $(v_0,p_0,R_0)$ with parameters $\gamma$, $\nu$ such that
$$
\rho_0\leq \eta/4\quad \textrm{ and }\quad v_0=\hat v\textrm{ for }t=0.
$$	
Observe that strictly speaking in applying Corollary \ref{cor:adapt} we would obtain a parameter $\gamma'<\gamma$. However, without loss of generality we may assume that the parameter is $\gamma$, since in Step 1 above we already chose $\gamma$ ``sufficiently small''. 
Furthermore
\begin{equation*}
\begin{split}
\int_{\T}|v_0|^2+\tr R_0=\int_{\T}|\hat v|^2+\tr \hat R&\quad\textrm{ for all }t,\\
\|v_0-\hat v\|_{H^{-1}}&\leq \eta/2,\\
\|v_0\otimes v_0+R_0-\hat v\otimes \hat v-\hat R\|_{H^{-1}}&\leq \eta/2.
\end{split}
\end{equation*}

We claim that then the following holds: for any $q\in\N$ and any $t\in[0,T]$ such that $\rho_0(t)\geq \delta_{q+2}$, we have
\begin{equation}\label{eq:hatvest}
\begin{split}
\|v_0\|_{1+\alpha}&\leq \delta_q^{1/2}\lambda_{q}^{1+\alpha},\\
|\partial_t\rho_0|&\leq \rho_0 \delta_q^{1/2}\lambda_q\,.
\end{split}
\end{equation}
Indeed, assuming $\rho(t)\geq \delta_{q+2}$ for some $q$, we obtain using \eqref{eq:dl2} and \eqref{eq:bcond}
\begin{align*}
\rho_0^{-(1+\nu)}(t)&\lesssim \lambda_q^{2\beta b^2(1+\nu)}\leq\delta_{q}^{1/2}\lambda_q\,,\\
\rho_0^{-\nu}(t)&\lesssim \lambda_q^{2\beta b^2\nu}\leq \delta_{q+2}\delta_q^{1/2}\lambda_q\,,
\end{align*}
provided $a\gg 1$ is sufficiently large to absorb constants.

\subsection*{Step 3 - Inductive construction of $(v_q,p_q,R_q)$.}

Starting with $(v_0,p_0,R_0)$ we 
construct inductively a sequence of $(v_q,p_q,R_q)$ of smooth strong subsolutions $q=1,2\dots$ with
	\[
	R_q(x,t)=\rho_q(t)\Id+\mR_q(x,t)
	\]
satisfying the following properties:

\begin{enumerate}
	\item [$(a_q)$] For all $t\in[0,T]$
	\begin{equation}
	\int_{\T}|v_q|^2+\tr R_q=\int_{\T}|v_0|^2+\tr R_0;
	\end{equation}
	\item [$(b_q)$] For all $t\in[0,T]$
	\begin{equation}
	\rho_q\leq \tfrac54\delta_{q+1};
	\end{equation}
	\item [$(c_q)$] For all $t\in[0,T]$
	\begin{equation}
	\|\mR_q\|_0\leq\begin{cases}\rho_q^{1+\tilde\gamma}& \textrm{ if }\rho_q\geq\tfrac32\delta_{q+2},\\
\rho_q^{1+\gamma}& \textrm{ if }\rho_q\leq\tfrac32\delta_{q+2};\end{cases}
	\end{equation}
	\item [$(d_q)$] If $\rho_q\geq\delta_{j+2}$ for some $j\geq q$, then
	\begin{align}
	\|v_q\|_{1+\alpha}&\leq M\delta_j^{1/2}\lambda_j^{1+\alpha},
	\label{eq:adsol1}\\
	|\partial_t\rho_q|&\leq\rho_q\delta_j^{1/2}\lambda_j;\label{eq:adsol3}
	\end{align}
	\item [$(e_q)$] For all $t\in[0,T]$ and $q\geq 1$
	\begin{equation}
	\|v_q-v_{q-1}\|_0\lesssim \delta_{q}^{1/2}.	\end{equation}
\end{enumerate}  
Thanks to our choice of parameters in Step 1 above, $(v_0,p_0,R_0)$ satisfies \eqref{eq:hatvest} and therefore our inductive assumptions $(a_0)-(f_0)$. 

Suppose now $(v_q,p_q,R_q)$ satisfies $(a_q)$-$(e_q)$ above. Let
\begin{align*}
J_q:=\Big\{t\in[0,T]:\,\rho_q(t)>\tfrac32\delta_{q+2}\Big\},\quad 
K_q:=\{t\in[0,T]:\,\rho_q(t)\geq 2\delta_{q+2}\}.
\end{align*}
Being (relatively) open in $[0,T]$, $J_q$ is a disjoint, possibly countable, union of (relatively) open intervals $(T_1^{(i)},T_2^{(i)})$. Let
$$
\mathcal{I}_q:=\left\{i:\,(T_1^{(i)},T_2^{(i)})\cap K_q\neq\emptyset\right\}
$$
and let $t_0\in (T_1^{(i)},T_2^{(i)})\cap K_q$ for some $i\in\mathcal{I}_q$. Since $K_q$ is compact, we may assume that the open interval $(T_1^{(i)},t_0)$ is contained in $J_q\setminus K_q$.
Using $(d_q)$ we then have
\begin{align*}
\tfrac32\delta_{q+2}=\rho_q(T_1^{(i)})&\geq\rho_q(t_0)-|T_1^{(i)}-t_0|\sup_{J_q}|\partial_t\rho_q|\\
&\geq2\delta_{q+2}-2\delta_{q+2}\delta_q^{1/2}\lambda_q|T_1^{(i)}-t_0|,
\end{align*}
hence 
\begin{equation}\label{eq:tt0}
|T_1^{(i)}-t_0|\geq \tfrac{1}{4}(\delta_q^{1/2}\lambda_q)^{-1}> 4\tau_q,
\end{equation}
provided $a\gg 1$ is chosen sufficiently large. Similar estimate holds with $T_2^{(i)}$. Therefore $T_2^{(i)}-T_1^{(i)}> 4\tau_q$ for any $i\in \mathcal{I}_q$, so that $\mathcal{I}_q$ is a finite index set.

Next, we apply Proposition \ref{prop:gluing} (in the form of Remark \ref{rem:moreint}) to $(v_q,p_q,R_q)$ on the intervals 
\[
\bigcup_{i\in\mathcal{I}_q}J_{q,i}.
\]
Since $\rho_q>\tfrac32\delta_{q+2}$ on $J_q$, from $(a_q)-(e_q)$ we see that the assumptions of Proposition \ref{prop:gluing} on $(v_q,p_q,R_q)$ hold with parameter $\tilde\gamma$. Then we obtain $(\bar v_q,\bar p_q,\bar R_q)$ such that
\begin{align*}
\|\bar v_q(t)-v_q(t)\|_\alpha&\lesssim\bar\rho_q^{(1+\tilde\gamma)/2}\ell_{q}^{\alpha/3},\\
\|\bar v_q\|_{1+\alpha}&\lesssim\delta_q^{1/2}\lambda_q^{1+\alpha},\\
\|\mathring{\bar R}_q\|_{0}& \leq\bar\rho_q^{1+{\tilde\gamma}}\ell_{q}^{-\alpha},\\
\tfrac78\rho_q\leq &\bar{\rho}_q\leq \tfrac98\rho_q,\\
|\partial_t\bar{\rho}_q|&\lesssim \bar\rho_q\delta_q^{1/2}\lambda_q\,.
\end{align*}
Moreover, recalling \eqref{eq:tntn}, for any $i\in\mathcal{I}_q$ we have the following additional estimates valid for $t\in [T_1^{(i)}+2\tau_q,T_2^{(i)}-2\tau_q]$:
	\begin{equation}\label{eq:barv-est1}
	\begin{split}
\|\bar v_q\|_{N+1+\alpha}&\lesssim\delta_q^{1/2}\lambda_q^{1+\alpha}\ell_{q}^{-N}\,,\\
\Big\|{\mathring{\bar  R}_q}\Big\|_{N+\alpha}&\lesssim \bar\rho_q^{1+{\tilde\gamma}}\ell_{q}^{-N-\alpha}\,,\\
\|(\partial_t+\bar v_q\cdot\nabla)\mathring{\bar R}_q\|_{N+\alpha}&\lesssim\bar\rho_q^{1+{\tilde\gamma}}\ell_{q}^{-N-\alpha}\delta_q^{1/2}\lambda_q\,,	
	\end{split}
	\end{equation}
and
\begin{equation}\label{eq:suppS1}
\supp\,\mathring{S}\subset\T\times\bigcup_{i}I_i,	
\end{equation}
where $\{I_i\}_{i}$ are the intervals defined in \eqref{e:defIi}.
Let us choose a cut-off function $\psi_{q}\in C^\infty_c(J_q;[0,1])$ such that
 \begin{align}
 \supp\,\psi_{q}&\subset \bigcup_{i\in\mathcal{I}_q}(T_1^{(i)}+2\tau_q,T_2^{(i)}-2\tau_q)\\
 K_q&\subset\{\psi_{q}=1\}\\
 |\psi_{q}'|&\lesssim \frac{1}{\delta_q^{1/2}\lambda_q}.\label{eq:psiqder1}
 \end{align}
 Such choice is made possible by \eqref{eq:tt0}. We want then to apply Proposition \ref{prop:pert} to $(\bar v_q,\bar p_q,\bar R_q)$ with  
 \[
  S=\psi_{q}^2(\bar R_q-\delta_{q+2}\Id)
 \]
 hence $\sigma=\psi_{q}^2(\bar \rho_q-\delta_{q+2})$. Using \eqref{eq:psiqder1}, \eqref{eq:gamma1ad} and \eqref{eq:barv-est1}-\eqref{eq:suppS1} we see that $S$ and $(\bar{v}_q,\bar{p}_q,\bar{R}_q)$ satisfy the assumptions of Proposition \ref{prop:pert} on the interval $[T_1^{(i)}+2\tau_q,T_2^{(i)}-2\tau_q]$ with parameters $\alpha,\tilde{\gamma}>0$. 
  Proposition \ref{prop:pert} gives then a new subsolution $(v_{q+1},p_{q+1}, \bar R_q-S-\tilde{\mathcal E}_{q+1})$ with 
\begin{align*}
\|v_{q+1}-\bar{v}_q\|_0+\lambda_{q+1}^{-1-\alpha}\|v_{q+1}-\bar{v}_q\|_{1+\alpha}&\leq \frac{M}{2}\delta_{q+1}^{1/2}\,,\\
\int_{\T}|v_{q+1}|^2-\tr S-\tr\tilde{\mathcal E}_{q+1}=\int_{\T}|\bar v_q|^2&\quad\textrm{ for all }t\in[0,T]. 
\end{align*}
and such that the estimates \eqref{eq:eest}-\eqref{eq:eestdt} hold for 
$\tilde{\mathcal E}_{q+1}$. 
Let
\[
R_{q+1}:=\bar R_q-S-\tilde{\mathcal E}_{q+1}.
\]

\smallskip 

We claim that $(v_{q+1}, p_{q+1}, R_{q+1})$ is a smooth strong subsolution satisfying $(a_{q+1})-(f_{q+1})$. Notice that $(a_{q+1})$ is satisfied by construction.
By definition of $S$, one has then 
 \begin{align*}
 \rho_{q+1}&=\bar{\rho}_q(1-\psi_{q}^2)+\psi_{q}^2\delta_{q+2}-\tfrac13\tr\tilde{\mathcal E}_{q+1}\,,\\
 \mathring{R}_{q+1}&=\mathring{\bar R}_q(1-\psi_{q}^2)-\mathring{\tilde{\mathcal E}}_{q+1}\,.
 \end{align*}
 Then $(b_{q+1})$ follows directly from \eqref{eq:eest} and the fact that $K_q\subset\{\psi_{q}=1\}$.
 
 Next, observe that if $\rho_{q+1}\leq\tfrac32\delta_{q+3}$, then $t\notin J_q$, hence $\rho_{q+1}=\rho_q$, $\mR_{q+1}=\mR_q$. Therefore in verifying conditions 
 $(c_{q+1})-(d_{q+1})$ it suffices to restrict to the case when $\rho_{q+1}\geq \tfrac32\delta_{q+3}$ and $j=q+1$, respectively. 
 
If $t\in J_q$ then the argument for showing $(c_{q+1})$ is precisely as the proof of $(b_{q+1})$ in Step 3 of Proposition \ref{prop:stradapt} above. Also, the estimates in $(d_{q+1})$ for $j=q+1$ follow from \eqref{eq:v1va} and \eqref{eq:eestdt}. Finally, $(e_{q+1})$ follows precisely as $(f_{q+1})$ in the proof of Proposition \ref{prop:stradapt} above.

Thus, the inductive step is proved.  

\smallskip

Finally, the convergence of $\{v_q\}$ to a solution of the Euler equations as in the statement of Proposition \ref{prop:adaptsol} follows easily from the sequence of estimates in $(a)_q-(f)_q$, analogously to Step 4 of Proposition \ref{prop:stradapt} above.

	\end{proof}

\end{document}